\documentclass[a4paper, 11pt]{article}
\baselineskip15pt
\usepackage{mathrsfs,amssymb,amsmath, amsthm,color,tikz}

\usepackage[top=1.31in,bottom=1.29in,left=1.1in,right=0.9in]{geometry}
\usepackage[numbers]{natbib}
\setcitestyle{open={},close={}}


\numberwithin{equation}{section}\theoremstyle{definition}
\swapnumbers

 \newtheorem{Theorem}[equation]{Theorem}
 \newtheorem{Prop}[equation]{Proposition}
 \newtheorem{Lemma}[equation]{Lemma}
 \newtheorem{Cor}[equation]{Corollary}
 \newtheorem{Notation}[equation]{Notation}
 \newtheorem{Defn}[equation]{Definition}
 \newtheorem{Example}[equation]{Example}
 \newtheorem{Remark}[equation]{Remark}

 \newtheorem{Hypo}[equation]{Hypothesis}
\newtheorem{Results}[equation]{Results}
\newtheorem{Cons}[equation]{Consequences}



 \makeatletter
\def\enumerate{\begingroup\ifnum\@enumdepth>3\@toodeep\else
      \advance\@enumdepth\@ne
      \edef\@enumctr{enum\romannumeral\the\@enumdepth}%
      \topsep\z@\parskip\z@
      \list{\csname label\@enumctr\endcsname}
        {\@nmbrlisttrue\let\@listctr\@enumctr
         \parsep\z@\itemsep\z@\topsep\z@
         \setcounter{\@enumctr}{0}
         \def \fMakelabel##1{\hss\llap{\rm ##1}}
       }\fi}

 \makeatother

\parindent=0pt


\def\t{\mathfrak t}

\def\s{\mathfrak s}
\def\p{\mathfrak p}

\def\C{\mathbb C}
\def\N{\mathbb N}

\def\Z{\mathbb Z}
\def\F{\mathbb F}

\def\cO{\mathcal O}

\def\cR{\mathcal R}
\def\cE{\mathcal E}

\def\cD{\mathcal D}
\def\cF{\mathcal F}

\def\fX{\mathfrak X}

\def\fM{\mathfrak M}
\def\fD{\mathfrak D}
\def\fS{\mathfrak S}

\def\fD{\mathfrak D}

\def\ff{\mathfrak f}

\def\d{\text{d}}

\def \ppi{\p_{_{ \mathcal I}}}
\def \pj{\p_{\!_{ \mathcal J}}}

\def\ss{\underline{\s}}
\def\tt{\underline{\t}}

\DeclareMathOperator{\Hom}{Hom}
\DeclareMathOperator{\tab}{tab}
\DeclareMathOperator{\Res}{Res}

\DeclareMathOperator{\Ind}{Ind}

\DeclareMathOperator{\row}{row}

\DeclareMathOperator{\RStd}{RStd}
\DeclareMathOperator{\End}{End}

\DeclareMathOperator{\Stab}{Stab}
\DeclareMathOperator{\Pstab}{Pstab}

\DeclareMathOperator{\main}{main}

\DeclareMathOperator{\Tabl}{Tabl}
\DeclareMathOperator{\supp}{supp}
\DeclareMathOperator{\im}{im}
\DeclareMathOperator{\Lie}{Lie}
\DeclareMathOperator{\id}{id}

\DeclareMathOperator{\comp}{comp}

\begin{document}
\setlength{\abovedisplayskip}{4pt}
\setlength{\belowdisplayskip}{4pt}
\parindent=0pt

\title{$U_n(q)$ acting on flags and supercharacters}

\author{Richard Dipper $^{a,*}$, Qiong Guo $^{b,*}$\\ 
\\$^{a}$ {\footnotesize Institut f\"{u}r Algebra und Zahlentheorie}\\ {\footnotesize Universit\"{a}t Stuttgart, 70569 Stuttgart, Germany}
\\ \scriptsize{E-mail: richard.dipper@mathematik.uni-stuttgart.de}
\\$^{b}$ {\footnotesize College of Sciences, Shanghai Institute of Technology} \\ {\footnotesize 201418 Shanghai, P. R. China}
\\ \scriptsize{E-mail:qiongguo@sit.edu.cn}
\setcounter{footnote}{-1}\footnote{
$^*${\scriptsize Corresponding author.}}
\setcounter{footnote}{-1}\footnote{
{\scriptsize This work was partially supported by the 
DFG priority programme SPP 1388 in representation theory, no. 99028426}}
\setcounter{footnote}{-1}\footnote{\scriptsize\emph{2010 Mathematics Subject Classification.} Primary 20C15, 20D15. Secondary 20C33, 20D20 }
\setcounter{footnote}{-1}\footnote{\scriptsize\emph{Key words and phrases.} Flags, Monomial linearisation, Supercharacter}}
\date{}


 \maketitle

\begin{center} Dedicated to the memory of J.A.Green\end{center}

\begin{abstract}
Let $U=U_n(q)$ be the group of lower unitriangular $n\times n$-matrices with entries in the field $\F_q$ with $q$ elements for some prime power $q$ and $n\in \N$. We investigate the restriction to $U$ of the permutation action of $GL_n(q)$ on flags in the natural $GL_n(q)$-module $\F_q^n$. Applying our results to the special case of flags of length two we obtain a complete decomposition of the permutation representation of $GL_n(q)$ on the cosets of maximal parabolic subgroups into irreducible $\C U$-modules.
\end{abstract}

\section{Introduction}

The irreducible complex characters of the finite general linear group $GL_n(q)$ have been determined by J.A.~Green in his landmark paper [\cite{Gr55}].
They subdivide naturally into families, called Harish-Chandra series, labelled by conjugacy classes of semisimple elements of $GL_n(q)$.
In this paper we are  mainly concerned with the series attached to the identity element of $GL_n(q)$.
Those are afforded precisely by the irreducible constituents of the permutation representation of $GL_n(q)$ acting on the cosets of a Borel subgroup $B$, which are called unipotent Specht modules.
Many aspects of the representation theory of $GL_n(q)$ may be described analogously to the representation theory of the symmetric group $\fS_n$ -- indeed one expects the theory of $GL_n(q)$ to translate into that of $\fS_n$ by setting $q$ equal to $1$.

It is a classical result that the Specht module  $S^\lambda$ for $\fS_n$, where $\lambda$ is a partition of $n$, is integrally defined. It comes with a certain distinguished integral basis, called standard basis, which is labelled by standard $\lambda$-tableaux.
This suggests that a $q$-version of this should hold for unipotent Specht modules $S(\lambda)$ for $GL_n(q)$.
More precisely,  to each standard $\lambda$-tableau $\s$, there should be attached a polynomial $g_\s(x)\in \Z[x]$ with $g_\s(1)=1$ and  $g_\s(q)$ many elements of $S(\lambda)$ such that all these elements form a basis of $S(\lambda)$, where $\s$ runs through all the standard $\lambda$-tableaux.
In [\cite{brandt}, \cite{brandt2}, \cite{dj1}, \cite{guo}] such standard bases for $S(\lambda)$ were constructed in the special case of 2-part partitions $\lambda$.

One way to define unipotent Specht modules for $GL_n(q)$ in  a characteristic free way is provided by James' kernel intersection theorem [\cite{jamesbook}, Theorem 15.19]: For a partition $\lambda$ of $n$, the unipotent Specht module $S(\lambda)$ for $GL_n(q)$ over any field $K$ of characteristic not dividing $q$ is defined as intersection of the kernels of all homomorphisms $\phi: M(\lambda)\rightarrow M(\mu)$ for all partition $\mu$ of $n$ dominating $\lambda$. Here $M(\lambda)$ denotes the permutation representation of $GL_n(q)$ acting by right translation on the set of all $\lambda$-flags in the natural $GL_n(q)$-module $V=\F_q^n$, where $\F_q$ is the field with $q$ elements.  In a recent paper, Andrews [\cite{scott}] gave an alternate construction of the unipotent Specht modules based on generalized Gelfand-Graev characters.
If $K=\C$ then $S(\lambda)$ are irreducible unipotent $GL_n(q)$-modules appearing as irreducible constituents of  the permutation module of $GL_n(q)$ acting on flags in $V$.

An important ingredient  in [\cite{guo}] consists of an analysis of the restriction of $M(\lambda)$ to the unitriangular group $U=U_n(q)$ of $GL_n(q)$. The group algebra $KU$ is semisimple  for all fields $K$ of characteristic not dividing $q$. In  [\cite{guo}] decomposing the restriction of $M(\lambda)$ to $U$ completely into irreducible $U$-modules for 2-part partitions $\lambda$, and then applying James' kernel intersection theorem, the main results of [\cite{brandt2}] were reproved, giving the standard basis of $S(\lambda)$ a representation theoretic interpretation. This new approach in [\cite{guo}] bears resemblance to Kirillov's orbit method [\cite{kirillov}] and the supercharacter theory of $U$ introduced by Andr\'{e} [\cite{andre1}] and Yan [\cite{yan}]. Thus making this remarkable connection precise and generalize it to arbitrary compositions $\lambda$ of $n$ is hoped to be a first step towards constructing standard bases of unipotent Specht modules. This is the main goal of this paper.

The permutation modules $M(\lambda)$ decompose as $U$-modules into direct sums of $\s$-components $M_\s$, where $\s$ runs through the set $\RStd(\lambda)$ of row standard $\lambda$-tableaux. If, for $\s\in \RStd(\lambda)$, $d=d(\s)\in \fS_n $ denotes the permutation taking the initial $\lambda$-tableau to $\s$, the intersection $U^d\cap U$ is a semidirect product 
$U_K = U_J\rtimes U_L$ of pattern subgroups. In section 2 and 3 we monomialize the trivial representation of $U_L$ induced to $U_K$ in a nontrivial way applying and generalizing a result of Jedlitschky in [\cite{markus}]. In section 5 we describe the $U_K$-action on the monomial basis of $M_\s$ combinatorially in terms of $\lambda$-flags in $V$. For the special case of $\lambda = (1^n)$ and the initial $\lambda$-tableau we recover the Andr\`{e}-Yan supercharacters  of $U$ as characters afforded by the orbit modules (section 6).

In the remaining sections 7 and 8 we return to 2-part partitions improving on some of the results in [\cite{guo}]. Indeed we show that in this case the $U$-modules $M_\s$, $\s\in \RStd(\lambda)$, are multiplicity free and we determine their irreducible $U$-constituents in terms of supercharacters. Moreover we calculate combinatorially the multiplicities of irreducible $U$-constituents of $M(\lambda)$ and prove that these are independent of $q$.

\section{Monomial linearisation}\label{sec1}
In this section we shall present a procedure which allows to transform under certain circumstances a transitive $G$-set, $G$ a finite group, into a monomial $G$-set which may decompose into many orbits. Thus this yields a decomposition of the corresponding permutation module into a direct sum of monomial $\C G$-modules. This procedure is based on work of Jedlitschky [\cite{markus}]	 and Kirillov's orbit method [\cite{kirillov}]. For the moment, let $G$ be an arbitrary finite group and let $K$ be a field. Identifying the group algebra $KG$ with the $K$-algebra $K^G$ of functions from $G$ to $K$ by


\begin{equation}\label{1.1}
\tau\mapsto \sum_{g\in G}\tau(g) g \quad \text{ for }\tau\in K^G, \end{equation}
the following lemma is seen immediately:

\begin{Lemma}\label{3.2}
Let $G$ be a finite group, $H\leqslant G$ and let 
$e=\sum_{h\in H} h$ and $M=eK^G$. Then $M$ consists of functions from $G$ to $K$, which are constant on the right cosets of $H$ in $G$.\end{Lemma}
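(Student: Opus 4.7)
The plan is to unpack the identification in (\ref{1.1}) so that multiplication in $K^G$ becomes convolution, then to compute $e\tau$ directly. Under (\ref{1.1}) a straightforward expansion gives $(\alpha\beta)(x) = \sum_{g \in G} \alpha(g)\beta(g^{-1}x)$ for $\alpha, \beta \in K^G$, and the element $e = \sum_{h \in H} h$ corresponds to the indicator function $1_H$ of $H$. For an arbitrary $\tau \in K^G$ I would therefore compute
\[
(e\tau)(x) \;=\; \sum_{g\in G} 1_H(g)\,\tau(g^{-1}x) \;=\; \sum_{h \in H} \tau(h^{-1}x).
\]

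Next, for any $h_0 \in H$ the substitution $h' = h_0^{-1}h$ permutes $H$, yielding
\[
(e\tau)(h_0 x) \;=\; \sum_{h \in H} \tau(h^{-1}h_0 x) \;=\; \sum_{h' \in H} \tau(h'^{-1}x) \;=\; (e\tau)(x),
\]
so every element of $M = eK^G$ is a function on $G$ that is constant on the right cosets $Hx$.

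To close the reverse inclusion, given $f \in K^G$ which is constant on the right cosets of $H$, I would fix a set $R$ of right coset representatives and define $\tau \in K^G$ by $\tau(r) = f(r)$ for $r \in R$ and $\tau(g) = 0$ otherwise. Writing an arbitrary $x \in G$ as $x = h_0 r$ with $r \in R$ and $h_0 \in H$, exactly one summand in $(e\tau)(x) = \sum_{h \in H} \tau(h^{-1}h_0 r)$ is nonzero, namely the one with $h = h_0$, which contributes $\tau(r) = f(r) = f(x)$; hence $f = e\tau \in M$. The statement is essentially a direct bookkeeping exercise, so the only potential pitfall is tracking the left/right multiplication conventions; in particular, the reverse direction requires no assumption on whether $|H|$ is invertible in $K$, provided one constructs the preimage $\tau$ using coset representatives rather than the idempotent $|H|^{-1}e$.
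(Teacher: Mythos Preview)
Your argument is correct. The paper does not actually give a proof of this lemma at all; it merely states that the result ``is seen immediately'' from the identification (\ref{1.1}). You have supplied exactly the bookkeeping the paper omits: the convolution formula for the product in $K^G$, the verification that $e\tau$ is right-$H$-coset constant, and the construction of a preimage via coset representatives for the reverse inclusion. Your remark that the reverse inclusion does not require $|H|$ to be invertible in $K$ is a nice touch and is the reason the lemma is stated over an arbitrary field $K$ rather than only over $\C$.
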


Now let $G$ be a finite group acting on a finite abelian group $(V,+)$ by automorphisms, the action of $g\in G$ denoted by $v\mapsto vg \,(v\in V)$.

\begin{Defn}\label{1-cocycle}
A map $f: G\rightarrow V$ is called a (right) {\bf 1-cocycle} if
$f(xg)=f(x)g+f(g)$  for all $x,g\in G$ holds. Similarly we call $f$ a left 1-cocycle if $f$ satisfies $f(xg)=xf(g)+f(x)$ for all $x,g\in G$.
\end{Defn}

Observe the group algebra $\C V\cong \C^V$ of $V$ becomes a right $\C G$-module setting 
\begin{equation}\nonumber
(\tau.g)(v)=\tau(vg^{-1})\quad \text{ for } \tau\in \C^V, g\in G,  v\in V
\end{equation}
where the action of $G$ on $\C^V$ is denoted by 
$(\tau, g)\mapsto \tau. g$.


We denote $\hat V$ to be the set of linear characters of $V$, i.e. that is of group homomorphisms of  $(V,+)$ into to the multiplicative group $\C^*=\C\setminus\{0\}$ of $\C$. Note that $\hat V$ is contained in $\C ^V$. Indeed under the identification of $\C ^V$ and $\C V$, $\chi\in \hat V$ is mapped to $|V|e_{_{\bar\chi }}$ where $\bar{\chi}$ is the complex conjugate  character of $V$ and $e_{_{\bar{\chi}}}$ is the primitive idempotent of $\C V$ affording $\bar{\chi}$. Since $\{e_{_{{\chi}}}\,|\, \chi\in \hat V\}$ is a basis of $\C V$, we conclude that $\hat V$ is a $\C$-basis of $\C^V$. Moreover one checks immediately:
\begin{Lemma}\label{1.4}
The action $(\tau, g)\mapsto \tau.g$ of $G$ on $\C^V$ permutes $\hat V$. So, for $\chi\in \hat V, g\in G$ we have $\chi.g\in \hat V$.\hfill $\square$
\end{Lemma}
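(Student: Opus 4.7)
The plan is to verify directly that $\chi.g$ remains a group homomorphism from $(V,+)$ to $\C^*$ whenever $\chi$ is, and then conclude that the $G$-action on $\C^V$ restricts to a permutation action on $\hat V$.

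First I would unwind the definitions. Fix $\chi \in \hat V$ and $g \in G$. By the formula displayed just before Lemma \ref{1.4}, the function $\chi.g \in \C^V$ is given by $(\chi.g)(v) = \chi(vg^{-1})$ for all $v \in V$. Since $\chi$ takes values in $\C^*$, so does $\chi.g$. Thus the only content is to check multiplicativity, i.e. that $(\chi.g)(v_1+v_2) = (\chi.g)(v_1)(\chi.g)(v_2)$ for all $v_1,v_2 \in V$.

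Second, I would invoke the hypothesis that $G$ acts on $V$ by automorphisms. This gives $(v_1+v_2)g^{-1} = v_1g^{-1} + v_2g^{-1}$, so
\[
(\chi.g)(v_1+v_2) = \chi(v_1g^{-1} + v_2g^{-1}) = \chi(v_1g^{-1})\chi(v_2g^{-1}) = (\chi.g)(v_1)(\chi.g)(v_2),
\]
using that $\chi$ is a homomorphism in the middle equality. This proves $\chi.g \in \hat V$, so the action of $G$ on $\C^V$ stabilises $\hat V$. Since this is a group action, for each $g \in G$ the map $\chi \mapsto \chi.g$ on $\hat V$ is a bijection, with inverse $\chi \mapsto \chi.g^{-1}$, hence a permutation of $\hat V$.

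There is no real obstacle: the statement is essentially a bookkeeping check that the dual of an automorphism action on an abelian group is again an action on the character group. The only thing one must not forget is that the automorphism property of $g^{-1}$ on $V$ is exactly what converts a sum inside $\chi$ into a product outside, so the hypothesis that $G$ acts by automorphisms (rather than by arbitrary bijections) is used in an essential way.
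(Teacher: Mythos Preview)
Your proof is correct and is exactly the direct verification the paper has in mind; the paper omits the argument entirely (marking the lemma with a $\square$ after ``one checks immediately''), and your check that $(\chi.g)(v_1+v_2)=(\chi.g)(v_1)(\chi.g)(v_2)$ via the automorphism property of $g^{-1}$ is precisely that immediate check.
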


Now let $f: G\rightarrow V$ be a 1-cocycle. Then Jedlitschky showed in [\cite{markus}]:
\begin{Theorem}[]\label{markus}


 
The group algebra $\C^ V$ becomes a monomial $\C G$-module with monomial basis $\hat V$ by setting
$$
\chi g
=\chi(f(g^{-1}))\chi.g 
$$
for $\chi\in \hat V, g\in G.$ \hfill $\square$
\end{Theorem}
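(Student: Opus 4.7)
The plan is to verify directly that the prescription $\chi g = \chi(f(g^{-1}))\,\chi.g$, extended bilinearly, defines a right $\C G$-module structure on $\C^V$. Since by Lemma \ref{1.4} the element $\chi.g$ already lies in $\hat V$ and the formula only rescales it by the complex number $\chi(f(g^{-1}))$, the monomiality of the basis $\hat V$ is built into the definition; the substantive content is just the two module axioms on $\hat V$.

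First I would check the identity axiom $\chi e = \chi$. Setting $x = g = e$ in the cocycle identity gives $f(e) = f(e) + f(e)$, hence $f(e) = 0$. Together with $\chi.e = \chi$ and $\chi(0) = 1$ this yields $\chi e = \chi$.

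The heart of the proof is the associativity $(\chi g)h = \chi(gh)$ for all $g,h \in G$ and $\chi \in \hat V$. Expanding both sides via the defining formula gives
\begin{align*}
(\chi g)h &= \chi(f(g^{-1}))\,(\chi.g)(f(h^{-1}))\,\chi.(gh),\\
\chi(gh)  &= \chi\!\bigl(f(h^{-1}g^{-1})\bigr)\,\chi.(gh).
\end{align*}
Specializing the cocycle identity to the pair $(h^{-1},g^{-1})$ yields $f(h^{-1}g^{-1}) = f(h^{-1})g^{-1} + f(g^{-1})$. Since $\chi \in \hat V$ is a homomorphism $(V,+)\to \C^*$, this factorizes as $\chi(f(h^{-1})g^{-1})\,\chi(f(g^{-1}))$; and by the definition $(\tau.g)(v) = \tau(vg^{-1})$ the first factor equals $(\chi.g)(f(h^{-1}))$. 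Matching the two sides completes the check.

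The main (indeed only) obstacle is bookkeeping: one must apply the cocycle identity at $(h^{-1},g^{-1})$ rather than at $(g,h)$, so that the shift operator $\tau \mapsto \tau.g$ on $\C^V$ dovetails with the evaluation of $\chi$ on the decomposed element of $V$. Once associativity is established on the basis $\hat V$, bilinear extension to $\C G$ and $\C^V$ automatically produces the $\C G$-module structure, and the monomial statement is already embodied in the defining formula.
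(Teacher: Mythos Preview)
Your verification is correct. The paper itself does not give a proof of this theorem; it is quoted from Jedlitschky's thesis [\cite{markus}] and simply marked with a $\square$. Your direct check of the module axioms on the basis $\hat V$ --- using the cocycle identity at the pair $(h^{-1},g^{-1})$ so that the shift $\chi\mapsto\chi.g$ matches up with the additive decomposition of $f(h^{-1}g^{-1})$ --- is exactly the standard argument, and all the bookkeeping (in particular $(\chi.g).h=\chi.(gh)$ and $\chi(f(h^{-1})g^{-1})=(\chi.g)(f(h^{-1}))$) is handled correctly.
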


 Note that for 1-cocycle $f:G\rightarrow V$ (left or right) we  always have $f(1_G)=0\in V$.

\begin{Lemma}
Let  $f:G\rightarrow V$ be a 1-cocycle (or left 1-cocycle). Then 
$\ker f=\{g\in G\,|\, f(g)=0\in V\}$ is a subgroup of $G$.
\end{Lemma}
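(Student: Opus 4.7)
The plan is to verify the three subgroup axioms for $\ker f$ (containing $1_G$, closure under products, closure under inverses), using only the cocycle identity and the fact, already noted immediately before the lemma, that $f(1_G) = 0_V$. The case of a right 1-cocycle and the case of a left 1-cocycle will run in parallel.

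The identity axiom is immediate since $f(1_G) = 0$. For closure under products, I would take $x, g \in \ker f$ and expand $f(xg)$ using the cocycle identity: in the right case this gives $f(xg) = f(x)g + f(g) = 0 \cdot g + 0 = 0$, and in the left case $f(xg) = x f(g) + f(x) = 0 + 0 = 0$. Either way, $xg \in \ker f$.

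For closure under inverses, I would set $x = g^{-1}$ in the right cocycle identity to get $0 = f(1_G) = f(g^{-1} g) = f(g^{-1}) g + f(g) = f(g^{-1}) g$; since $G$ acts on $V$ by automorphisms, the map $v \mapsto vg$ is an injective endomorphism of $V$, which forces $f(g^{-1}) = 0$. The left case is symmetric, starting from $0 = f(gg^{-1}) = g f(g^{-1}) + f(g) = g f(g^{-1})$ and using injectivity of $v \mapsto gv$.

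No real obstacle is expected; the argument is a direct manipulation of the defining identity. The only point worth flagging is that the inverse step genuinely uses the hypothesis that $G$ acts on $V$ by \emph{automorphisms}, not merely by some maps — this is what allows one to cancel the trailing $g$ (respectively leading $g$) and conclude $f(g^{-1}) = 0$.
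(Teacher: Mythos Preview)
Your proof is correct and follows essentially the same route as the paper: verify the subgroup axioms directly from the cocycle identity and $f(1_G)=0$. The one minor difference is in the inverse step. You compute $f(g^{-1}g)=f(g^{-1})g+f(g)=f(g^{-1})g$ and then invoke injectivity of $v\mapsto vg$; the paper instead computes $f(aa^{-1})=f(a)a^{-1}+f(a^{-1})=0\cdot a^{-1}+f(a^{-1})=f(a^{-1})$, placing the known kernel element first so the action hits $0$ and no appeal to the automorphism hypothesis is needed. Both are fine, but the paper's ordering is slightly more economical and shows that the inverse step does not in fact require that $G$ act by automorphisms.
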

\begin{proof}

Let $a,b\in \ker f$, then
$f(ab)=f(a)b+f(b)=0\cdot b+0=0$ proving $ab\in \ker f$. Moreover
$$
0=f(1)=f(aa^{-1})=f(a)a^{-1}+f(a^{-1})=f(a^{-1})
$$
showing $a^{-1}\in \ker f$. The case of a left 1-cocycle $f$ is shown similarly.
\end{proof}

Let $f:G \rightarrow V$ be a 1-cocycle. We define
$$
f^*: \C^ V\rightarrow \C ^G: \tau \mapsto \tau\circ f \quad \text{ for } \tau \in \C^ V. 
$$
\begin{Lemma}[][\cite{markus}]\label{1.8}
Consider $\C^ V$ as a monomial $\C G$-module as defined in \ref{markus}. Then $f^*: \C^ V\rightarrow \C ^G\cong \C G$ is a $\C G$-homomorphism. In particular the image of $f^*$ is isomorphic to a right ideal $\C G$.
\end{Lemma}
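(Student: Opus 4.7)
The plan is to verify the homomorphism property on the monomial basis $\hat V$ and then appeal to $\C$-linearity. First I would spell out the two module structures involved. On the left, $\C^V$ carries the action of Theorem \ref{markus}, namely $\chi\cdot g=\chi(f(g^{-1}))\,\chi.g$ for $\chi\in \hat V$, with $(\chi.g)(v)=\chi(vg^{-1})$. On the right, the identification \eqref{1.1} turns right multiplication of $\C G$ on itself into the action on $\C^G$ given by $(\tau\cdot g)(x)=\tau(xg^{-1})$ for $\tau\in\C^G$, $x,g\in G$; this is the only place where \eqref{1.1} is really used, and it is worth recording as a preliminary observation.

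Next I would fix $\chi\in\hat V$ and $g\in G$ and compute both sides of the identity $f^*(\chi\cdot g)=f^*(\chi)\cdot g$ as functions on $G$. Using Theorem \ref{markus} together with the definitions of $f^*$ and of $\chi.g$, the left-hand side evaluated at $x\in G$ reads
\begin{equation}\nonumber
f^*(\chi\cdot g)(x)=(\chi\cdot g)(f(x))=\chi(f(g^{-1}))\,\chi\bigl(f(x)g^{-1}\bigr).
\end{equation}
The right-hand side evaluated at $x$ is simply $(f^*(\chi)\cdot g)(x)=f^*(\chi)(xg^{-1})=\chi(f(xg^{-1}))$. Now the 1-cocycle property yields $f(xg^{-1})=f(x)g^{-1}+f(g^{-1})$, and since $\chi\colon (V,+)\to\C^*$ is a group homomorphism, $\chi(f(xg^{-1}))=\chi(f(x)g^{-1})\,\chi(f(g^{-1}))$, which matches the expression obtained for the left-hand side. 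Extending by $\C$-linearity over the basis $\hat V$ gives that $f^*$ is a $\C G$-homomorphism.

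For the last sentence, once $f^*$ is a module homomorphism, $\im f^*$ is a $\C G$-submodule of $\C^G$; transporting back through the identification $\C^G\cong \C G$ of \eqref{1.1}, which intertwines the action just described with ordinary right multiplication, turns a $\C G$-submodule of $\C^G$ into a right ideal of $\C G$. I do not expect any real obstacle here: the proof is essentially a direct unwinding of definitions, the one subtle point being to get the conventions straight (the action $\chi\cdot g$ involves $g^{-1}$, and so does the regular action after \eqref{1.1}, and these two $g^{-1}$'s must match so that the cocycle identity applies in the form $f(xg^{-1})=f(x)g^{-1}+f(g^{-1})$). If one reversed the convention and worked with a left 1-cocycle instead, the computation would still go through but with the roles of the two factors swapped.
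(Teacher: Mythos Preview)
Your proof is correct. The paper does not supply its own proof of this lemma; it simply cites Jedlitschky's thesis \cite{markus} for the result, so there is nothing to compare against beyond noting that your direct verification---evaluating both $f^*(\chi g)$ and $f^*(\chi)g$ at an arbitrary $x\in G$ and invoking the 1-cocycle identity $f(xg^{-1})=f(x)g^{-1}+f(g^{-1})$ together with multiplicativity of $\chi$---is exactly the natural argument one would expect and is entirely sound.
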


Note that by [\cite{markus}] $f^*: \C^V\rightarrow \C^G$ is injective (surjective) if and only if $f: G\rightarrow V$ is surjective (injective). In particular, if $f:G\rightarrow V$ is surjective, the image $f^*$ provides an isomorphism from the $\C G$-module $\C V$ to an right ideal of $\C G$. Indeed we have the following generalisation of this:

\begin{Theorem}\label{1.9}
Let $G$  and $V$ be finite groups, $V$ abelian, and let $f:G\rightarrow V$ be a surjective 1-cocycle. Let $H\leqslant G$ be the kernel of $f$. Then 
$\C V\cong \C^V\cong \Ind_{H}^G {\C}$ as $\C G$-module, where ${\C=}\C_H$ is the trivial $H$-module.
\end{Theorem}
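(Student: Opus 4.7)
The plan is to use the $\C G$-homomorphism $f^{*}:\C^V\to\C^G$ of Lemma~\ref{1.8} and identify its image with the submodule of $\C^G$ that realizes $\Ind_H^G\C$. The first isomorphism $\C V\cong\C^V$ is just the identification (\ref{1.1}) (transporting the Jedlitschky $\C G$-structure of Theorem~\ref{markus}), so everything reduces to proving $\C^V\cong\Ind_H^G\C$ as $\C G$-modules.

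By Lemma~\ref{1.8}, $f^{*}$ is a $\C G$-homomorphism, and by the remark following that lemma the surjectivity of $f$ forces $f^{*}$ to be injective. It therefore only remains to pin down $\im f^{*}$. By Lemma~\ref{3.2}, under the identification (\ref{1.1}) the submodule $\Ind_H^G\C$ corresponds to $e\C^G$, the subspace of functions $G\to\C$ that are constant on the right cosets of $H$, where $e=\sum_{h\in H}h$. Given $\tau\in\C^V$, $h\in H$ and $g\in G$, the cocycle identity yields
$$
f(hg)=f(h)\,g+f(g)=0+f(g)=f(g),
$$
so $f^{*}(\tau)(hg)=\tau(f(g))=f^{*}(\tau)(g)$. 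Hence $\im f^{*}\subseteq\Ind_H^G\C$.

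To finish I compare dimensions: $\dim_\C\C^V=|V|$ while $\dim_\C\Ind_H^G\C=[G:H]$. The display above shows that $f$ descends to a surjection $\bar f:H\backslash G\to V$, and I claim it is also injective. Suppose $f(g_1)=f(g_2)$. Writing $g_1=(g_1g_2^{-1})g_2$ and using $f(g_2^{-1})=-f(g_2)g_2^{-1}$ (which drops out of expanding $0=f(1)=f(g_2g_2^{-1})$ via the cocycle rule), a short computation gives
$$
f(g_1g_2^{-1})=\bigl(f(g_1)-f(g_2)\bigr)g_2^{-1}=0,
$$
so $g_1g_2^{-1}\in H$, i.e.\ $Hg_1=Hg_2$. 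Thus $\bar f$ is a bijection, $|V|=[G:H]$, and the injection $f^{*}$ is forced to be an isomorphism onto $\Ind_H^G\C$.

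The main (though minor) obstacle is the last dimension count, which rests on the cocycle bookkeeping that shows $\bar f$ is injective; the rest of the argument is an assembly of Lemmas~\ref{3.2} and~\ref{1.8} together with the surjectivity/injectivity remark for $f^{*}$.
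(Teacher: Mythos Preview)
Your proof is correct and follows essentially the same route as the paper's: both show $\im f^*\subseteq\Ind_H^G\C$ via the cocycle identity and both rely on the key computation that $f(g_1)=f(g_2)$ forces $Hg_1=Hg_2$. The only cosmetic difference is in the final step: the paper proves surjectivity of $f^*$ onto $\Ind_H^G\C$ directly by checking that $f^*(\tau_v)$ is the indicator function of the coset $Hg$ with $f(g)=v$, whereas you invoke injectivity of $f^*$ (from the remark after Lemma~\ref{1.8}) and finish with the dimension count $|V|=[G:H]$.
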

\begin{proof}
By \ref{3.2} it suffices to show that
$f^*(\C^V)\subseteq \C^G$  consists precisely of all functions from $G$ to $\C$
which are constant on the cosets of $H$ in $G$. Let $\tau:V\rightarrow \C$ be a function, $h\in H$ and $g\in G$. Then $f(h)=0$ and
\begin{eqnarray*}
f^*(\tau) (hg)=\tau( f(hg))=\tau(f(h)g+f(g))=\tau(f(g))=f^*(\tau)(g).
\end{eqnarray*}
This shows that $f^*(\tau)$ is constant on the cosets of $H$ in $G$ and hence
$\im f^*=f^*(\C ^V) \leqslant \Ind_{H}^G \C_H $.

For $v\in V$ define $\tau_v: V\rightarrow \C: u\mapsto \delta_{vu}$, (so $\{\tau_v\,|\,v\in V\}$ is a group basis of $\C^ V$). Note that for $g\in G, h\in H,v\in V$ we have 
\begin{eqnarray*}
(f^*(\tau_v))(hg)&=&f^*(\tau_v)(g)=\tau_v(f(g))\\
&=&\begin{cases}
1 &\text{ for } f(g)=v\\ 0 & \text{ for } f(g)\neq v
\end{cases}.
\end{eqnarray*}
So $f^*(\tau_v)$ takes the coset $Hg$ to 1 for $f(g)=v$. Let $a,b\in G$ and suppose that $f(a)=f(b)$. Then
$f(ab^{-1})=f(a)b^{-1}+f(b^{-1})=f(b)b^{-1}+f(b^{-1})=f(bb^{-1})=f(1)=0$ showing that $a$ and $b$ are contained in the same coset of $H$ in $G$. We conclude that $f^*(\tau_v)=0$ on all cosets of $H$ in $G$ different from $Hg$ with $f(g)=v$. This shows that $f^*: \C ^V \rightarrow \Ind_H^G \C_H$ is surjective, by choosing for each $v\in V$ an element $g\in G$ such that $f(g)=v$ using surjectivity of $f$.
\end{proof}

\section{Pattern subgroups}\label{sec2}
Throughout  $q$ is a power of some prime $p$ and $\F_q$ is the field with $q$ elements. The $n$-dimensional $\F_q$-space $\F_q^n$ is the natural module for the general linear group $GL_n(q)$ (acting from right). The {\bf root system} $\Phi$ of $GL_n(q)$ consists of all positions $\{(i,j)\,|\,1\leqslant i, j \leqslant n, i\neq j\}$ of $n\times n$-matrices, and $\Phi=\Phi^+\cup \Phi^-$ with $\Phi^+=\{(i,j)\,|\,1\leqslant i<j \leqslant n\}$, \, $\Phi^-=\{(i,j)\,|\,1\leqslant j<i \leqslant n\}$, (positive and negative roots). The $\F_q$-algebra of $n\times n$-matrices over $\F_q$ is denoted by $M_n(q)$. For $A\in M_n(q)$, $1\leqslant i,j \leqslant n$, let $A_{ij}\in \F_q$ be the entry at position $(i,j)$ in $A$. Thus $A=\sum_{1\leqslant i, j\leqslant n}A_{ij}e_{ij}$, where $e_{ij}$ is the $(i,j)$-th matrix unit. In particular $E=\sum_{1\leqslant i \leqslant n} e_{ii}$ is the identity of $GL_n(q)$.

For $(i,j)\in \Phi, \alpha\in \F_q$, let $x_{ij}(\alpha)=E+\alpha e_{ij}$. Then the {\bf root subgroup} $X_{ij}=\{x_{ij}(\alpha)\,|\,\alpha\in \F_q\}$ is isomorphic to the additive group $(\F_q,+)$.

Recall that for $A\in M_n(\F_q)$, $(i,j)\in \Phi$, $\alpha\in \F_q$, $x_{ij}(\alpha) A$ (respectively $Ax_ {ij}(\alpha)$) is obtained from $A$ by multiplying row $j$ (column $i$) of $A$ by $\alpha$ and adding it to row $i$ (column) $j$.


A subset $J\subseteq \Phi$ is {\bf closed}, if $(i,j),(j,k)\in J$ and $(i,k)\in \Phi$ implies $(i,k)\in J$. Note that if $J\subseteq \Phi^+(\Phi^-)$, the condition $(i,k)\in \Phi$ is automatically satisfied and can be omitted.


For $A\in M_n(\F_q)$, the {\bf support} $\supp(A)$ of $A$ is the set of positions $(i,j), 1\leqslant i, j \leqslant n$ such that $A_{ij}\neq 0$. Thus $A=\sum_{(i,j)\in\supp(A)} A_{ij}e_{ij}$. If $J\subseteq \Phi^- (\Phi^+)$ is closed, then $U_J=\{E+A\,|\, A\in M_n(\F_q), \,\supp(A)\subseteq J\}$ is a subgroup, called {\bf pattern subgroup}, of the $p$-Sylow subgroup $U^-=U=U_n(q)$ ($U^+$) of $GL_n(q)$ of lower (upper) unitriangular matrices. Moreover $U_J$ is generated by the root subgroups $X_{ij}$, $(i,j)\in J$. Indeed it is well known and follows easily from Chevalley's commutator formula (see e.g. [\cite{carter}])  that fixing an arbitrary linear ordering on $J$ and taking every product in this ordering, every element of $U_J$ can be uniquely  written as product:
\begin{equation}\label{2.3}
\prod_{(i,j)\in J} x_{ij} (\alpha_{ij}) \text{ with } \alpha_{ij}\in \F_q.
\end{equation}
Obviously $U=U^-=U_{\Phi^-}$ and $U^+=U_{\Phi^+}$. Inspecting Chevalley's commutator formula one obtains immediately:
\begin{Lemma}\label{2.4}
Let $J\subseteq I\subseteq \Phi^-$ ($\Phi^+$) be closed. Then $U_J$ is a normal subgroup of $U_I$ if and only if the following holds:
If $(i,j), (j,k)\in I$ and $(i,j)$ or $(j,k)$ is in $J$, then $(i,k)\in J$.  \hfill $\square$
\end{Lemma}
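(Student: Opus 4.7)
The plan is to derive both implications from Chevalley's commutator formula. In the type $A$ setting of $GL_n(q)$ this reduces to the elementary identity
\[
x_{ij}(\alpha)\, x_{jk}(\beta) = x_{jk}(\beta)\, x_{ij}(\alpha)\, x_{ik}(\alpha\beta) \qquad (i, j, k \text{ pairwise distinct}),
\]
together with the observation that $x_{rs}(\alpha)$ and $x_{ij}(\beta)$ commute whenever $s \neq i$ and $r \neq j$; both follow at once from $e_{ab}e_{cd} = \delta_{bc}\, e_{ad}$. In view of the normal form (\ref{2.3}), the pattern subgroups $U_I$ and $U_J$ are generated by their root subgroups, so $U_J$ is normal in $U_I$ if and only if $x_{rs}(\alpha)\, x_{ij}(\beta)\, x_{rs}(\alpha)^{-1} \in U_J$ for every $(r,s) \in I$, $(i,j) \in J$, and all $\alpha, \beta \in \F_q$.

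For necessity, I assume $U_J$ is normal in $U_I$ and take $(i,j), (j,k) \in I$ with, say, $(i,j) \in J$. Conjugating $x_{ij}(\alpha) \in U_J$ by $x_{jk}(\beta) \in U_I$ and applying the identity above gives
\[
x_{jk}(\beta)^{-1}\, x_{ij}(\alpha)\, x_{jk}(\beta) = x_{ij}(\alpha)\, x_{ik}(\alpha\beta),
\]
which lies in $U_J$ by normality. Since $x_{ij}(\alpha) \in U_J$, I obtain $x_{ik}(\alpha\beta) \in U_J$ for all $\alpha, \beta \in \F_q$, so $X_{ik} \subseteq U_J$ and hence $(i,k) \in J$. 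The condition $(i,k) \in \Phi$ is automatic, as $(i,j), (j,k) \in \Phi^-$ forces $i > j > k$ (and analogously for $\Phi^+$). The case $(j,k) \in J$ is handled symmetrically by conjugating $x_{jk}(\beta)$ by $x_{ij}(\alpha) \in U_I$, producing $x_{jk}(\beta)\, x_{ik}(\alpha\beta) \in U_J$ and hence $(i,k) \in J$ as before.

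For sufficiency I fix $(r,s) \in I$, $(i,j) \in J$ and analyse the conjugate $x_{rs}(\alpha)\, x_{ij}(\beta)\, x_{rs}(\alpha)^{-1}$ according to the overlap of the two index pairs. If $s \neq i$ and $r \neq j$ the two factors commute and the conjugate equals $x_{ij}(\beta) \in U_J$. If $s = i$ the identity yields $x_{ri}(\alpha)\, x_{ij}(\beta)\, x_{ri}(-\alpha) = x_{ij}(\beta)\, x_{rj}(\alpha\beta)$; applying the hypothesis to the chain $(r,i), (i,j) \in I$ with $(i,j) \in J$ places $(r,j) \in J$, so the conjugate lies in $U_J$. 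If $r = j$ a parallel computation yields $x_{js}(\alpha)\, x_{ij}(\beta)\, x_{js}(-\alpha) = x_{ij}(\beta)\, x_{is}(-\alpha\beta)$, and the chain $(i,j), (j,s) \in I$ with $(i,j) \in J$ forces $(i,s) \in J$. The residual case $r=i, s=j$ just reproduces an element of the abelian root subgroup $X_{ij}\subseteq U_J$.

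I expect no substantial obstacle: the entire argument is Chevalley's formula plus bookkeeping of which pair in the chain is required to lie in $J$ and of the sign produced by the commutator. This is why the authors can present the lemma as an immediate consequence of the commutator formula.
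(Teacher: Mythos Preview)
Your argument is correct and is precisely the intended one: the paper does not spell out a proof at all but simply records the lemma as an immediate consequence of Chevalley's commutator formula, and you have supplied exactly those details. The case analysis is complete (the overlap $s=i,\,r=j$ cannot occur inside $\Phi^-$ or $\Phi^+$), and the reduction to conjugating root-subgroup generators by root-subgroup generators is justified by the normal form~(\ref{2.3}).
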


\begin{Lemma}\label{2.5}
Let $L,J\subseteq \Phi^-$ be closed, $L\cap J=\emptyset$. Suppose that for $(i,j), (j,k)\in \Phi^-$ such that one of these roots is contained in $L$ the other in $J$ we have always $(i,k)\in J$. Then $K=L\cup J\subseteq \Phi^-$ is closed, $U_L, U_J\leqslant U_K, U_J\trianglelefteq U_K$ and $U_K$ is the semi-direct product of $U_J$ by $U_L$. Let $M=\Ind^{U_K}_{U_L} \C$ and $e$ be a $\C$-vector space generator of the trivial $U_L$-module $\C$. Then $\{eu\,|\,u\in U_J\}$  is a $\C$-basis of $M$ on which $U_J$ acts by multiplication and $U_L$ by conjugation, that is, $eul=eu^l$ for $u\in U_J, l\in U_L$.
\end{Lemma}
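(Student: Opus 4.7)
The plan is to verify the four structural claims about $K, U_K$ one at a time, then read off the basis and action descriptions from the semidirect product structure.

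First, to see $K=L\cup J$ is closed, I would take $(i,j),(j,k)\in K$ (with $(i,k)\in\Phi$ automatic since $K\subseteq\Phi^-$) and run a short case analysis on which of the two roots lies in $L$ and which in $J$: if both are in $L$ or both in $J$, closedness of the respective set gives $(i,k)\in K$, while the remaining mixed cases are exactly the hypothesis, which places $(i,k)$ in $J\subseteq K$. The inclusions $U_L,U_J\leqslant U_K$ are then immediate from $L,J\subseteq K$. For normality I would apply Lemma~\ref{2.4} with $I=K$ and the subset $J$: the two cases $(i,j),(j,k)\in J$ and mixed are handled respectively by closedness of $J$ and by the hypothesis of the lemma, so Lemma~\ref{2.4} applies and $U_J\trianglelefteq U_K$.

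Next, to realise the semidirect product, I would note $U_L\cap U_J=\{E\}$ since $L\cap J=\emptyset$ forces the support of any common element to be empty. To get $U_K=U_JU_L$ I would invoke the unique factorisation (\ref{2.3}): choose any total order on $K$ in which every element of $J$ precedes every element of $L$; then every element of $U_K$ factors uniquely as a product over $J$ followed by a product over $L$, i.e.\ as an element of $U_J$ times an element of $U_L$. Combined with normality of $U_J$, this yields $U_K=U_J\rtimes U_L$.

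Finally, to identify $M=\Ind_{U_L}^{U_K}\C$, I would use that $U_J$ is a transversal for $U_L$ in $U_K$ by the semidirect decomposition, whence $\{eu\mid u\in U_J\}$ is a $\C$-basis of $M$. The action of $U_J$ on this basis is plainly by right multiplication. For $l\in U_L$ and $u\in U_J$, I would rewrite $ul=l\cdot(l^{-1}ul)=l\,u^l$, which is legal because $U_J\trianglelefteq U_K$ forces $u^l\in U_J$; then
\[
eu\cdot l = e(ul) = e\,l\,u^l = (el)u^l = e\,u^l,
\]
using $el=e$. The main conceptual obstacle is really just the normality step, since the conjugation formula $eul=eu^l$ and the basis claim fall out formally once normality and the semidirect product structure are in hand; all other verifications reduce to disjoint-case bookkeeping on roots.
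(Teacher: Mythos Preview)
Your proposal is correct and follows essentially the same approach as the paper. The paper's own proof is terse---it simply asserts that closedness of $K$ and the semidirect product $U_K=U_J\rtimes U_L$ are immediate, then gives the identical computation $eul=ell^{-1}ul=eu^l$---so your version is a fully spelled-out expansion of what the authors leave to the reader.
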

\begin{proof}
One checks immediately that $K$ is closed in $\Phi^-$ and that $U_K=U_J\rtimes U_L$.
Obviously $(eu_1)u_2=eu_1u_2$ for $u_1, u_2\in U_J$ and $eul=ell^{-1}ul=eu^l$ for $l\in U_L$ and $u\in U_J$.
\end{proof}

For any subset $J\subseteq \Phi^-$ we set
\begin{equation}\label{2.6}
V_J=\{A \in M_n(q)\,|\, \supp A\subseteq J\}.
\end{equation}

Note that if $J$ is closed then $V_J=\{u-E\,|\, u\in U_J\}$ is the Lie algebra $\Lie(U_J)$ associated with the pattern subgroup $U_J$  and is a (nilpotent) $\F_q$-subalgebra of $M_n(q)$. Moreover, the multiplication rule $e_{ij}e_{kl}=\delta_{jk}e_{il}$ for matrix units implies with \ref{2.4} immediately:
\begin{Lemma}\label{2.7}
Let $J\subseteq  K\subseteq \Phi^-$ be closed. Then $U_J\trianglelefteq U_K$ if and only if $V_J$ is an ideal of $V_K$. \hfill $\square$
\end{Lemma}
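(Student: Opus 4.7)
The plan is to show directly that $V_J$ being an ideal of $V_K$ is equivalent to the purely combinatorial condition that characterises normality in Lemma \ref{2.4}, namely
\[
(*)\quad \text{if } (i,j),(j,k)\in K \text{ and } \{(i,j),(j,k)\}\cap J\neq\emptyset,\text{ then } (i,k)\in J.
\]
Since Lemma \ref{2.4} already provides the equivalence $U_J\trianglelefteq U_K \iff (*)$, it suffices to prove $V_J\triangleleft V_K \iff (*)$.

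First I would unpack the definitions: by \eqref{2.6} and the observation preceding the lemma, $V_K$ and $V_J$ have natural $\F_q$-bases $\{e_{ij}\mid (i,j)\in K\}$ and $\{e_{ij}\mid (i,j)\in J\}$ respectively, and $V_J\subseteq V_K$ as $J\subseteq K$. By $\F_q$-bilinearity, $V_J$ is a two-sided ideal of $V_K$ if and only if $e_{ij}e_{kl}\in V_J$ and $e_{kl}e_{ij}\in V_J$ for every $(i,j)\in J$ and $(k,l)\in K$.

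Next I would apply the matrix unit multiplication rule $e_{ij}e_{kl}=\delta_{jk}e_{il}$ quoted in the paragraph before the lemma. For $(i,j)\in J$ and $(k,l)\in K$, the product $e_{ij}e_{kl}$ is either zero (when $j\neq k$, automatically in $V_J$) or equal to $e_{il}$ with $(i,j)\in J$ and $(j,l)\in K$; this lies in $V_J$ exactly when $(i,l)\in J$. Since $J,K\subseteq\Phi^-$ we have $i>j>l$, so $(i,l)\in\Phi^-$ automatically and there is no degenerate case $i=l$. An identical analysis for $e_{kl}e_{ij}$ with $(k,l)\in K$, $(i,j)\in J$ handles the situation where it is $(j,k)$ rather than $(i,j)$ that lies in $J$. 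Taken together, the requirement that every such product lie in $V_J$ is literally $(*)$.

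Chaining the two equivalences then yields $V_J\triangleleft V_K \iff (*) \iff U_J\trianglelefteq U_K$, which is the statement. The only real thing to check is the point just above: that the proviso ``$(i,k)\in\Phi$'' appearing in Lemma \ref{2.4} is vacuous for $J,K\subseteq \Phi^-$, so that the combinatorial conditions on the group side and the Lie algebra side are genuinely identical; this is the main (and only) subtlety to guard against in the write-up.
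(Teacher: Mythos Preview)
Your argument is correct and is exactly the approach the paper indicates: the sentence preceding the lemma says it follows immediately from the matrix unit rule $e_{ij}e_{kl}=\delta_{jk}e_{il}$ combined with Lemma~\ref{2.4}, and your write-up simply unpacks that implication. The observation that the proviso ``$(i,k)\in\Phi$'' is vacuous for subsets of $\Phi^-$ is also already noted in the paper just after the definition of closed subsets.
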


Note that in the situation of \ref{2.7} with $U_J\trianglelefteq U_K$, the natural projection map $\tilde \rho: V_K \rightarrow V_{K\setminus J}=\{A\in M_n(q)\,|\,\supp A\subseteq K\setminus J\}$ is an $\F_q$-algebra  homomorphism with kernel $V_J$.


We now apply the results of section \ref{sec1} to the following situation:
\begin{Hypo}\label{2.8}
Let $J, L\subseteq \Phi^-$ be closed, $J\cap L=\emptyset, K=J\cup L$ and $U_J\trianglelefteq U_K$. 
\end{Hypo}
Under this hypothesis  we have: $K\subseteq \Phi^{-}$ is closed and $U_K=U_J\rtimes U_L, V_J\trianglelefteq V_K$ and the natural projection 
\[
\tilde \rho: V_K\rightarrow V_L
\]
is an $\F_q$-algebra homomorphism with kernel $V_J$ and $\tilde \rho|_{{V_L}}=\id_{{V_L}}$. We define $\rho:U_K\rightarrow U_L: u\mapsto \tilde \rho(u-E)+E$ for $u\in U_K$. Obviously $\rho$ is a group epimorphism with kernel $U_J$. Note that $ \rho|_{U_L}=\id_{U_L}$, so $\rho$ is the natural projection of $U_K=U_J\rtimes U_L$ onto $U_L$.

\begin{Lemma}\label{2.9}
Suppose \ref{2.8}. Then $U_K$ acts on $V_J$ as group of $\F_q$-vector space automorphisms, where the action $\circ: V_J\times U_K \rightarrow V_J$ of $U_K$ on $V_J$ being defined by $A\circ g=\rho(g^{-1})Ag$ for $A\in V_J, g\in U_K$.
\end{Lemma}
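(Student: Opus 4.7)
The plan is to check the three properties required by the statement: (i) the formula $A \circ g = \rho(g^{-1}) A g$ really lands in $V_J$, (ii) it defines a group action of $U_K$, and (iii) for each fixed $g \in U_K$ the map $A \mapsto A \circ g$ is $\F_q$-linear (and hence, being a group action, an $\F_q$-vector space automorphism). All three follow from elementary manipulations once one uses Lemma \ref{2.7} (which gives $V_J \trianglelefteq V_K$) together with the fact that $\rho : U_K \to U_L$ is a group epimorphism restricting to the identity on $U_L$.

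For well-definedness, write $\rho(g^{-1}) = E + B$ with $B \in V_L$ and $g = E + C$ with $C \in V_K$. Expanding gives
\[
\rho(g^{-1}) A g \;=\; A + BA + AC + BAC,
\]
and since $A \in V_J$ while $B, C \in V_K$ and $V_J$ is a two-sided ideal of $V_K$ by Lemma \ref{2.7}, each of $BA$, $AC$ and $BAC$ lies in $V_J$. Hence $\rho(g^{-1}) A g \in V_J$.

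For the group-action axioms, $A \circ 1_{U_K} = \rho(1) A \cdot 1 = A$ is immediate, and associativity is a one-line calculation using that $\rho$ is a homomorphism:
\[
A \circ (gh) \;=\; \rho((gh)^{-1}) A gh \;=\; \rho(h^{-1}) \bigl(\rho(g^{-1}) A g\bigr) h \;=\; (A \circ g) \circ h.
\]

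Finally, for each fixed $g$ the map $A \mapsto \rho(g^{-1}) A g$ is manifestly $\F_q$-linear, being left and right multiplication by matrices; combined with the group action property just established, its inverse is $A \mapsto A \circ g^{-1}$, so it is an $\F_q$-linear automorphism of $V_J$. There is really no main obstacle here; the only point that requires any care is ensuring that the image stays inside $V_J$, which is exactly where the ideal property $V_J \trianglelefteq V_K$ — i.e.\ Hypothesis \ref{2.8} combined with Lemma \ref{2.7} — is used.
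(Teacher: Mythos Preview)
Your proof is correct and follows essentially the same approach as the paper: both expand $\rho(g^{-1})Ag$ by writing $\rho(g^{-1})=E+B$ and $g=E+C$ and then use that $V_J$ is an ideal of $V_K$ (Lemma \ref{2.7}) to see each cross term lies in $V_J$. The paper is simply more terse, dispatching the group-action axioms and linearity with the single remark ``since $\rho$ is a group homomorphism the claim follows at once,'' whereas you spell these out explicitly.
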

\begin{proof}
Let $g\in U_K, A\in V_J$. Then
\begin{eqnarray*}
\rho(g^{-1})Ag&=&(\tilde \rho(g^{-1}-E)+E)A ((g-E)+E)\\
&=& A+A(g-E)+\tilde \rho(g^{-1}-E)A+\tilde \rho(g^{-1}-E)A(g-E)
\end{eqnarray*}
is contained in $V_J$, since $g-E, \tilde \rho(g^{-1}-E) \in V_K$ and $V_J$ is an ideal of $V_K$. Since $\rho$ is a group homomorphism the claim follows at once.
\end{proof}

\begin{Lemma}\label{2.10}
Suppose \ref{2.8}. Then the map
\[
f: U_K\rightarrow V_J: g\mapsto \rho(g^{-1})g-E
\]
is an 1-cocycle with kernel $U_L$. Moreover $f|_{U_J}$ is bijective and hence $f$ is in particular surjective.
\end{Lemma}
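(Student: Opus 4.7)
The plan is to verify the three claims in turn, each by a short direct computation using the homomorphism property of $\rho$ and the equality $\rho|_{U_L}=\id_{U_L}$ (together with $\ker\rho = U_J$).

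For the cocycle identity, I would compute both sides of $f(xg)=f(x)\circ g+f(g)$ and match them. Since $\rho$ is a group homomorphism, $\rho((xg)^{-1})=\rho(g^{-1})\rho(x^{-1})$, so
\[
f(xg)=\rho(g^{-1})\rho(x^{-1})xg-E.
\]
On the other hand, using the definition of the $U_K$-action on $V_J$ from \ref{2.9},
\[
f(x)\circ g+f(g)=\rho(g^{-1})\bigl(\rho(x^{-1})x-E\bigr)g+\rho(g^{-1})g-E=\rho(g^{-1})\rho(x^{-1})xg-E,
\]
after the $\rho(g^{-1})g$ terms cancel. So both sides agree, and $f$ is a 1-cocycle. (Note that $\rho(g^{-1})\rho(x^{-1})xg-E$ does lie in $V_J$ since $\rho(g^{-1})Ag \in V_J$ whenever $A=\rho(x^{-1})x-E\in V_J$, which is guaranteed by \ref{2.9}.)

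Next I would pin down the kernel. An element $g\in U_K$ satisfies $f(g)=0$ exactly when $\rho(g^{-1})g=E$, i.e.\ $g=\rho(g)$. Since $\rho(g)\in U_L$ and $\rho$ restricts to the identity on $U_L$, this equality is equivalent to $g\in U_L$, giving $\ker f=U_L$.

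Finally, for the bijectivity of $f|_{U_J}$: if $u\in U_J=\ker\rho$, then $\rho(u^{-1})=E$, so $f(u)=u-E$. The assignment $u\mapsto u-E$ is clearly a bijection $U_J\to V_J$ by the very definition of $U_J$ as $\{E+A\mid A\in V_J\}$. In particular $f$ is surjective. The only thing to watch is the order of the matrix products and the fact that $\rho$ intertwines inverses; once that bookkeeping is done, no further obstacle arises. There is really no hard step here — the lemma is essentially a formal consequence of the semidirect product structure $U_K=U_J\rtimes U_L$ together with the homomorphism property of $\rho$.
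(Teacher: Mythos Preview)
Your argument is essentially the paper's: the cocycle identity and the bijectivity of $f|_{U_J}$ are verified by the same direct computations, and your kernel argument (using $g=\rho(g)\in U_L$) is in fact a slight streamlining of the paper's version. The one point the paper handles more carefully is the well-definedness of $f$ as a map into $V_J$: the paper checks $\tilde\rho(f(g))=0$ directly, whereas your parenthetical appeal to \ref{2.9} presupposes $f(x)\in V_J$ in order to conclude $f(xg)\in V_J$ --- you should either verify $f(g)\in V_J$ once (e.g.\ by writing $g=ul$ with $u\in U_J$, $l\in U_L$ and using $f(u)=u-E\in V_J$, $f(l)=0$), or mimic the paper's $\tilde\rho$-computation.
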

\begin{proof}
Let $g\in U_K$. Since $\rho(g^{-1})$ and $g$ are contained in $U_K$, we have $f(g)=\rho(g^{-1})g-E\in V_K$ and hence
\begin{eqnarray*}
\tilde \rho(f(g))+E&=&\rho(f(g)+E)=\rho(\rho(g^{-1})g)=\rho(\rho(g^{-1}))\rho(g)\\&=&\rho(g^{-1})\rho(g)=E,
\end{eqnarray*}
since $ \rho(g^{-1})\in U_L$ and $\rho|_{_{U_L}}=\id_{_{U_L}}$. Thus $\tilde \rho(f(g))=0$ and hence $f(g)\in \ker \tilde \rho=V_J$.

Let $x,g\in U_K$. Then
\begin{eqnarray*}
f(x)\circ g+ f(g) &=& \rho(g^{-1})(\rho(x^{-1})x-E)g+\rho(g^{-1})g-E\\
&=&\rho(g^{-1}x^{-1})xg-\rho(g^{-1})g+\rho(g^{-1})g-E\\
&=& \rho(g^{-1}x^{-1})xg-E=f(xg),
\end{eqnarray*}
thus $f$ is a 1-cocycle.

Suppose $0=f(g)=\rho(g^{-1})g-E$, then $\rho(g^{-1})g=E$ and hence $\rho(g)=g$, since $\rho:U_K\rightarrow U_L$ is a group homomorphism.
But $\rho(g)=\tilde \rho(g-E)+E$ and hence $0=f(g)$ is equivalent to $\tilde \rho(g-E)=g-E$, and hence $g-E\in V_L$ that is $g\in U_L$. So $\ker f=U_L$.

Finally for $g\in U_J$, $\rho(g)=E$ and hence $f(g)=g-E$. In particular $f|_{U_J}: U_J\rightarrow V_J: g\mapsto g-E$ is a bijection and thus $f$ is in particular surjective.
\end{proof}

Using \ref{markus} and \ref{1.9} we have shown :
\begin{Theorem}\label{2.11}
Assume \ref{2.8} and let $V=V_J$. Let $f: U_K \rightarrow V_J$ be the 1-cocycle  defined in \ref{2.10}. Then the group algebra $\C V\cong \C^V$ of the finite abelian group $(V,+)$ is a right $\C U_K$-module satisfying the following:
\begin{enumerate}
\item [1)] $U_K$ acts on the $\C$-basis $\hat V=\Hom ((V,+), \C^*)$ monomially, the action of $g\in U_K$ on $\chi\in \hat V$ given by 
\[
\chi g=\chi(f(g^{-1}))\chi.g,
\]
where $\chi.g\in \hat V$ is defined by $\chi.g(A)=\chi(A\circ g^{-1})$ for $A\in V.$
\item [2)] The $\C U_K$-module $\C V\cong \C^V$ is isomorphic to $\Ind_{U_L}^{U_K}\C$, the trivial $\C U_L$-module $\C=\C_{U_L}$ induced to $U_K$.
\item [3)] The restriction $\Res_{U_J}^{U_K}(\C ^V)$ is isomorphic to the right regular $\C U_J$-module.
\item [4)] For $g\in U_L$ we have $f(g^{-1})=0$, hence $\chi(f(g^{-1}))=1$. So $U_L$ acts on $\hat V$ by permutations.
\end{enumerate}
\hfill $\square$
\end{Theorem}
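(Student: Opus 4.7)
The plan is to harvest the theorem by assembling Theorem \ref{markus} and Theorem \ref{1.9} with the concrete data we have just manufactured in Lemmas \ref{2.9} and \ref{2.10}. All the genuine work has in fact already been done: Lemma \ref{2.9} supplies the required action of the finite group $U_K$ on the finite abelian group $(V_J,+)$ by automorphisms (indeed by $\F_q$-linear automorphisms), and Lemma \ref{2.10} produces a surjective 1-cocycle $f\colon U_K\to V_J$ with respect to this action, whose kernel is $U_L$. Thus the abstract machinery of section \ref{sec1} is ready to be applied verbatim.

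For part 1), I would simply cite Theorem \ref{markus} with $G=U_K$, $V=V_J$ and cocycle $f$: this gives at once the monomial $\C U_K$-module structure on $\C^V$ with basis $\hat V$ and with the displayed action formula $\chi g=\chi(f(g^{-1}))\chi.g$, the twisted permutation $\chi.g$ being defined by $(\chi.g)(A)=\chi(A\circ g^{-1})$ via the $U_K$-action from Lemma \ref{2.9}. For part 2), I would invoke Theorem \ref{1.9}: Lemma \ref{2.10} verifies the surjectivity hypothesis and identifies the kernel with $H=U_L$, so $\C V\cong\C^V\cong \Ind_{U_L}^{U_K}\C$ as $\C U_K$-modules.

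Part 4) is immediate from the cocycle formula: if $g\in U_L$, then $g^{-1}\in U_L=\ker f$, so $f(g^{-1})=0$ and $\chi(f(g^{-1}))=\chi(0)=1$; the monomial scalar collapses and only the permutation action of $g$ on $\hat V$ remains. Part 3) I would deduce from part 2) by a one-line index computation: because $U_K=U_J\rtimes U_L$, the coset space $U_K/U_L$ is in $U_J$-equivariant bijection with $U_J$ acting on itself by right multiplication, so Mackey's restriction (here just a transversal argument) gives $\Res_{U_J}^{U_K}\Ind_{U_L}^{U_K}\C\cong \C U_J$ as right $\C U_J$-modules.

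There is no substantive obstacle here; the only thing to watch is bookkeeping. The mild subtlety in part 3) is to make sure the isomorphism with the regular representation is $\C U_J$-equivariant and not only a dimension match, which is why I prefer the coset/semidirect product argument over trying to read it off the monomial formula; in any case, the dimension check $|\hat V|=|V_J|=|U_J|$ (using the bijection $f|_{U_J}\colon U_J\to V_J$ from Lemma \ref{2.10}) reassures one that nothing has been lost.
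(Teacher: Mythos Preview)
Your proposal is correct and matches the paper's approach exactly: the paper simply writes ``Using \ref{markus} and \ref{1.9} we have shown'' before the statement and puts a $\square$ after it, so parts 1) and 2) are precisely the citations you give, and parts 3) and 4) are the obvious consequences you spell out. For part 3) the transversal argument you indicate is essentially Lemma \ref{2.5} of the paper (the basis $\{eu\mid u\in U_J\}$ with $U_J$ acting by multiplication), so even your added detail is already present in the ambient text.
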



Recall from basic linear algebra, that multiplying $A\in M_n(q)$ from the left (right) by $x_{ij}(\alpha)=E+\alpha e_{ij}$ ($\alpha\in \F_q, 1\leqslant i,j\leqslant n, i\neq j$) produces a matrix obtained from $A$ by adding $\alpha$ times row $j$ (column $i$) of $A$ to row $i$ (column $j$). For any set $J\subseteq \{(i,j)\,|\,1\leqslant i,j\leqslant n\}$ of positions and $V=V_J=\{A\in M_n(q)\,|\,\supp (A)\subseteq J\}$ we denote the projection which takes $A\in M_n(q)$ to $\sum_{(i,j)\in J} A_{ij}e_{ij}\in V_J$
by $\pi _{\!_J}$. Thus $\pi_{\!_J}$ is the natural $\F_q$-vector space projection of $M_n(q)$ onto $V_J$. Adding $\alpha$ times ($\alpha\in \F_q$) row (column) $i$ to row (column) $j$ in a matrix $A\in M_n(q)$ ($1\leqslant i, j\leqslant n, i\neq j$) and following up with $\pi_{\!_J}$, (so obtaining a matrix in $V_J$) is called {\bf truncated row (column) operation}, from row (column) $i$ to row  (column) $j$ along $V_J$.


We return to the setting of \ref{2.8}. We want to exhibit explicit formulas for the action of $U_K$ on $\hat V$, where again we set $V=V_J=\{A\in M_n(q)\,|\, \supp (A)\subseteq J\}$.


First we describe $\hat V$. Recall that $V$ is an $\F_q$-vector space with $\F_q$-basis $\{e_{ij}\,|\, (i,j)\in J\}$. Let $\{\epsilon_{ij}\,|\,(i,j)\in J\}$ be the dual $\F_q$-basis of the dual space $V^*=\Hom_{\F_q}(V, \F_q)$. Thus $\epsilon_{ij}$ maps $A\in V=V_J$ to its $(i,j)$-th coordinate $A_{ij}\in \F_q$ for $(i,j)\in J$, and every linear function $\chi^*\in V^*$ on $V$ is a unique linear combination $\sum_{(i,j)\in J} B_{ij}\epsilon_{ij}=\chi^*$ with $B_{ij}\in \F_q$. Let $B=\sum_{(i,j)\in J} B_{ij}e_{ij}\in M_n(q)$ and denote $\chi^*$ by $\chi_{_B}^*$, so $\{\chi_{_B}^*\,|\,B\in V\}=V^*$.

We choose once for all a nontrivial linear character
\begin{equation}\label{2.13}
\theta: (\F_q,+)\rightarrow \C^*
\end{equation}
of the additive group of the field $\F_q$ into the multiplicative group $\C^*=\C\setminus \{0\}$.
The following result is easily checked by direct calculation:
\begin{Lemma}\label{2.14}
For $\chi^*=\chi_{_B}^*\in V^*, B\in V$, let $\chi_{_B}:V\rightarrow \C^*$ be the composite map $\theta \circ \chi_{_B}^*$. Then $\chi_{_B}\in \hat V$ and $\chi_{_B}\neq \chi_{_C}$ for $B, C\in V, B\neq C$. In particular, $\hat V=\{\chi_{_B}\,|\,B\in V\}$. \hfill $\square$
\end{Lemma}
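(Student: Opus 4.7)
The plan is to verify three things: (i) each $\chi_{_B}$ is a linear character of $(V,+)$, (ii) the assignment $B\mapsto \chi_{_B}$ is injective, and (iii) every element of $\hat V$ arises this way. The first statement is a direct check: $\chi^*_{_B}$ is $\F_q$-linear (hence in particular a homomorphism of additive groups $(V,+)\to(\F_q,+)$), and $\theta:(\F_q,+)\to \C^*$ is a group homomorphism by definition, so the composite $\chi_{_B}=\theta\circ\chi^*_{_B}$ is a group homomorphism $(V,+)\to \C^*$, i.e., $\chi_{_B}\in\hat V$.

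For injectivity, suppose $\chi_{_B}=\chi_{_C}$ with $B,C\in V$. Setting $D=B-C$, bilinearity of the pairing $V\times V\to\F_q$ encoded by the $\epsilon_{ij}$ gives $\chi^*_{_B}-\chi^*_{_C}=\chi^*_{_D}$, so $\theta\circ\chi^*_{_D}$ is trivial on $V$, i.e., the image of $\chi^*_{_D}$ lies in $\ker\theta$. The key observation is that $\chi^*_{_D}:V\to\F_q$ is $\F_q$-linear, so its image is an $\F_q$-subspace of the one-dimensional $\F_q$-space $\F_q$; hence it is either $\{0\}$ or all of $\F_q$. Since $\theta$ is nontrivial, $\ker\theta\subsetneq\F_q$, ruling out the second option. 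Therefore $\chi^*_{_D}\equiv 0$, forcing $D=0$ by evaluating on the basis $\{e_{ij}\,|\,(i,j)\in J\}$ using the dual property $\epsilon_{ij}(e_{kl})=\delta_{(i,j),(k,l)}$. Thus $B=C$.

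For surjectivity, I would finish with a counting argument: since $(V,+)$ is a finite abelian group one has $|\hat V|=|V|$, and by injectivity the set $\{\chi_{_B}\,|\,B\in V\}\subseteq\hat V$ also has cardinality $|V|$, forcing equality. The only step that requires any thought is the second one, and the potential pitfall there is forgetting that $\chi^*_{_D}$ is $\F_q$-linear (not merely $\F_p$-linear) so that its image must be an $\F_q$-subspace of $\F_q$; without this, the argument that the image lies in $\ker\theta$ implies triviality would fail (a proper $\F_p$-subspace of $\F_q$ can well be contained in $\ker\theta$ when $q>p$).
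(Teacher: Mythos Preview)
Your proof is correct. The paper actually omits the proof entirely, stating only that the result ``is easily checked by direct calculation'' and closing with a $\square$; your argument fills in exactly the kind of details the authors had in mind, with the injectivity step (using that $\chi^*_{_D}$ is $\F_q$-linear so its image is an $\F_q$-subspace of $\F_q$) being the one nontrivial observation.
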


For $B\in V, g\in U_K$, \ref{1.4} implies that $\chi_{_B}.g=\chi_{_C}$ for $C\in V$ satisfying $(\chi_{_B}^*.g) (e_{st})=C_{st}$, where $(s,t)\in J$. So  in order to determine $C$,  we have to evaluate $\chi_{_B}^*.g$ at matrix unites $e_{st}$ with $(s,t)\in J$. For convenience we denote $C$ by $B.g$. Then $\chi_{_B}^*.g=\chi^*_{_{B.g}}$.

\begin{Lemma}\label{2.15}
Suppose \ref{2.8} and let $B\in V=V_J, g\in U_K$. Then $B.g=\pi_{\!_J}((\rho(g))^t B g^{-t})$, where on the right hand side we have ordinary matrix multiplication and $A^t$ denotes the transpose of $A\in M_n(q)$ and $g^{-t}=(g^{-1})^t=(g^t)^{-1}$.
\end{Lemma}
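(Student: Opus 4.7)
The plan is to unwind $B.g$ through the chain of definitions in Theorem \ref{2.11}, rewrite $\chi_{_B}^*$ as a trace pairing, and extract the answer via a one-line matrix identity. Recall that $B.g$ is characterised by $(\chi_{_B}^*.g)(e_{st})=(B.g)_{st}$ for $(s,t)\in J$, and combining the definition of the action on $V^*$ with \ref{2.9} gives $e_{st}\circ g^{-1}=\rho(g)\,e_{st}\,g^{-1}\in V_J$, so
\[
(\chi_{_B}^*.g)(e_{st})\;=\;\chi_{_B}^*\bigl(\rho(g)\,e_{st}\,g^{-1}\bigr).
\]

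Next I would observe that since $B\in V_J$ has zero entries off $J$, the sum $\sum_{(i,j)\in J}B_{ij}Y_{ij}$ defining $\chi_{_B}^*$ harmlessly extends to all of $M_n(q)$ and coincides with the trace pairing $\chi_{_B}^*(Y)=\mathrm{tr}(B^{t} Y)$. Substituting $Y=\rho(g)\,e_{st}\,g^{-1}$ and using cyclicity of the trace converts the above expression into $\mathrm{tr}\bigl(g^{-1}B^{t}\rho(g)\,e_{st}\bigr)$, and the elementary identity $\mathrm{tr}(Me_{st})=M_{ts}$ together with $(X^{t})_{ij}=X_{ji}$ produces $\bigl(\rho(g)^{t}\,B\,g^{-t}\bigr)_{st}$.

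Since this equals the $(s,t)$-coordinate of $\pi_{\!_J}(\rho(g)^{t}Bg^{-t})$ for every $(s,t)\in J$, and $B.g$ lies in $V_J$ (so its entries off $J$ vanish by definition), the identity $B.g=\pi_{\!_J}(\rho(g)^{t}Bg^{-t})$ follows. The only potential obstacle is purely notational bookkeeping: tracking the transposes, the cyclic shift in the trace, and the index swap from $\mathrm{tr}(Me_{st})=M_{ts}$. No structural difficulty arises, because \ref{2.9} already ensures $e_{st}\circ g^{-1}\in V_J$, so no auxiliary projection step is needed beyond restricting attention to entries indexed by $J$.
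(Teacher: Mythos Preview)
Your proof is correct and genuinely cleaner than the paper's. The paper reduces to generators $g=x_{kl}(\alpha)$, splits into the cases $(k,l)\in L$ and $(k,l)\in J$, expands $\epsilon_{ij}(x_{kl}(\alpha)e_{st}x_{kl}(-\alpha))$ by hand, and then identifies the resulting row/column operations with $\pi_{\!_J}(\rho(g)^tBg^{-t})$ in each case separately. Your approach bypasses all of that by recognising $\chi_{_B}^*$ as the trace pairing $Y\mapsto\mathrm{tr}(B^tY)$ and handling arbitrary $g\in U_K$ in one stroke via cyclicity; this is shorter and, importantly, sidesteps the need to verify that the formula is compatible with products (the reduction to generators tacitly uses that $\pi_{\!_J}\bigl(\rho(h)^t\,\pi_{\!_J}(\,\cdot\,)\,h^{-t}\bigr)=\pi_{\!_J}\bigl(\rho(h)^t(\,\cdot\,)h^{-t}\bigr)$, which the paper does not spell out). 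What the paper's computation buys, on the other hand, is the concrete description of $B.g$ as a truncated row/column operation that feeds directly into Theorem~\ref{2.17}; with your argument that interpretation would be extracted afterwards from the matrix formula, which is immediate anyway.
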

\begin{proof}
Since $U_K$ is generated by root subgroups $X_{kl}$ with $(k,l)\in K=J\cup L$, we may assume $g=x_{kl}(\alpha)$ with $\alpha\in \F_q$. Let $(i,j), (s,t)\in J$, then 
\begin{equation}\label{2.16}
\beta_{ij}:=(\epsilon_{ij}.x_{kl}(\alpha))(e_{st})=\epsilon_{ij}(e_{st}\circ x_{kl}(-\alpha))=\epsilon_{ij}(\rho(x_{kl}(\alpha))e_{st}x_{kl}(-\alpha))
\end{equation}

{\bf 1. Case:} Suppose $(k,l)\in L.$ Then by \ref{2.8}, definition of $\rho$,
\begin{eqnarray*}
\beta_{ij}&=&\epsilon_{ij}(x_{kl}(\alpha)e_{st}x_{kl}(-\alpha))= \epsilon_{ij}((E+\alpha e_{kl})e_{st}(E-\alpha e_{kl}))\\
&=& \epsilon_{ij}(e_{st}-\alpha e_{st}e_{kl}+\alpha e_{kl}e_{st}-\alpha^2 e_{kl}e_{st}e_{kl})
\end{eqnarray*}
Since $(k,l), (s,t)\in \Phi^-$, we have $l<k, t<s$ and hence $t=k$ implies $l<s$. Thus $e_{kl}e_{st}e_{kl}=0$ and
\begin{eqnarray}\label{2.1777}
\beta_{ij}&=& \epsilon_{ij}(e_{st}-\alpha e_{st}e_{kl}+\alpha e_{kl}e_{st})\nonumber\\
&=&\begin{cases}
 \epsilon_{ij}(e_{st}-\alpha e_{sl})=\delta_{is}\delta_{jt}-\alpha \delta_{is}\delta_{jl} & 
 \text{ if } t=k
 \\
 \epsilon_{ij}(e_{st}+\alpha e_{kt})=\delta_{is}\delta_{jt}+\alpha \delta_{ik}\delta_{jt} & 
 \text{ if } s=l
 \\
 \epsilon_{ij}(e_{st})=\delta_{is}\delta_{jt} & 
 \text{ otherwise}.
\end{cases}
\end{eqnarray}

Now $\chi_{_B}^*=\sum_{(i,j)\in J}B_{ij}\epsilon_{ij}$ and hence by (\ref{2.16}):
\begin{eqnarray*}
C_{st}&=&\chi_{_C}^*(e_{st})=(\chi_{_B}^*.x_{kl}(\alpha))(e_{st})=
\sum_{(i,j)\in J}B_{ij} (\epsilon_{ij}(e_{st}\circ x_{kl}(-\alpha)))\\&=& \sum_{(i,j)\in J} B_{ij}\beta_{ij}
\stackrel{(\ref{2.1777})}=\begin{cases}
B_{st}-\alpha B_{sl} &\text{ for } t=k\\
B_{st}+\alpha B_{kt} & \text{ for }s=l\\
B_{st}& \text{ otherwise}.
\end{cases}
\end{eqnarray*} 
Thus we obtain matrix $C=B.x_{kl}(\alpha)$ from $B$ by adding $-\alpha$ times column $l$ to column $k$, adding $\alpha$ times row $k$ to row $l$ in $B$ and projecting the resulting matrix into $V=V_J$. By basic linear algebra this is indeed  $\pi_{\!_J}(x_{lk}(\alpha)Bx_{lk}(-\alpha))$ as desired.

{\bf 2. Case:} Now suppose $(k,l)\in J$. Then
\begin{eqnarray*}
\beta_{ij}&=&(\epsilon_{ij}.x_{kl}(\alpha))(e_{st})=\epsilon_{ij}(e_{st}(E-\alpha e_{kl}))\\
&=&\begin{cases}
\epsilon_{ij} (e_{st}-\alpha e_{sl})=\delta_{is}\delta_{jt}-\alpha \delta_{is}\delta_{jl} &\text{ if } t=k\\
\epsilon_{ij} (e_{st})=\delta_{is}\delta_{jt} & \text{ otherwise}
\end{cases}
\end{eqnarray*}
and we conclude that $C=B.x_{kl}(\alpha)$ is obtained from $B$ by adding $-\alpha$ times column $l$ to column $k$ and projecting the resulting matrix into $V$ by $\pi_{\!_J}$. But this is again $\pi_{\!_J}(Bx_{lk}(-\alpha))=\pi_{\!_J}((\rho(x_{kl}(\alpha)))^tBx_{kl}(-\alpha)^t)$, since $\rho(x_{kl}(\alpha))=E$  for $(k,l)\in J$.
\end{proof}

\begin{Defn}\label{2.177}
Let $1\leqslant j<i\leqslant n$, then $-\epsilon_{ij}(A)=-A_{ij}$ for $A\in V_J$, and hence $\theta\circ (-\epsilon_{ij})(A)=\overline{\theta(A_{ij})}$, where $\bar z$ denotes the complex conjugate of $z\in \C$. From this it follows immediately, that $\chi_{_{-A}}=\overline{\chi_{_A}}$, the complex conjugate character to $\chi_{_A}$ for $A\in V_J$. So for $A\in V_J$, we define $e_{_A}=q^{-|J|} \sum _{B\in V_J} \overline{\chi_{_A}(B)} B$. Thus $q^{-|J|} \chi_{_{-A}}\in \C^ V$ is identified under (\ref{1.1}) with the primitive idempotent  $e_{_A}\in \C (V_J,+)$ affording the linear character $\overline{\chi_{_A}}=\chi_{_{-A}}$. In order to distinguish the idempotents $e_{_A}\in \C (V,+)$ from idempotents in the group algebra $\C U_J$, we call $e_{_A}$, $A\in V_J$ ``lidempotent'' indicating that it belongs to the Lie algebra of $U_J$ rather than $U_J$. Thus $\cE_{J}=\{e_{_A}\,|\,A\in V_J\}$ is called {\bf lidempotent basis} of $\C(V_J,+)$.
\end{Defn}

Combining \ref{2.11} and \ref{2.15} we have shown:
\begin{Theorem}\label{2.17}
Suppose \ref{2.8} and let $f: U_K\rightarrow V=V_J$ be the  1-cocycle defined in \ref{2.10}. Let $A\in V=V_J$ and $g=x_{ij}(\alpha)\in U_K, (k,l)\in K=J\cup L, \alpha\in \F_q$. Then $\chi_{_A}.x_{kl}(\alpha)=\chi_{_B}$, where $B\in V$ is obtained from $A$ by the truncated column operation along $V$  from column $l$ to column $k$ using the factor $-\alpha$, if $(k,l)\in J$. If $(k,l)\in L$, then $B$ is obtained from $A$ by  combined truncated row and column operation along $V$ from column $l$ to column $k$, row $k$ to row $l$ with factors $-\alpha$ and $\alpha$ respectively. Moreover
\begin{equation}\label{2.18}
\chi_{_A} g=
\begin{cases}
\theta(-\alpha A_{ij})\chi_{_B} & \text{ for } (i,j)\in J\\
\chi_{_B} & \text{ for } (i,j)\in L.
\end{cases}
\end{equation}
As a consequence, we obtain a monomial action of $U_K$ on the lidempotent basis $\cE_J$ of $\C (V_J,+)$ (defined in \ref{2.177}) as follows:
\begin{equation}\label{2.199}
e_{_A} g=
\begin{cases}
\theta(\alpha A_{ij})e_{_B} & \text{ for } (i,j)\in J\\
e_{_B} & \text{ for } (i,j)\in L.
\end{cases}
\end{equation}
\end{Theorem}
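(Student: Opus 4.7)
The plan is to combine the general monomial action formula from Theorem \ref{2.11}(1) with the explicit description of the permutation part given by Lemma \ref{2.15}. By \ref{2.11}(1), for each $g \in U_K$ one has $\chi_{_A}\,g = \chi_{_A}(f(g^{-1}))\,\chi_{_A}.g$, so for a root subgroup generator $g = x_{ij}(\alpha) = x_{kl}(\alpha)$ (the same root, written twice in the statement) there are exactly two pieces to pin down: the permuted linear character $\chi_{_A}.g = \chi_{_{A.g}}$, and the unit-scalar $\chi_{_A}(f(g^{-1})) \in \C^{*}$.

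For the permuted character I would simply appeal to Lemma \ref{2.15}, which identifies $A.g = \pi_{\!_J}(\rho(g)^t A g^{-t})$. Specialising the two branches of its proof gives exactly the two advertised prescriptions: if $(k,l) \in J$ then $\rho(g) = E$ and $A.g = \pi_{\!_J}(A x_{lk}(-\alpha))$, which is the truncated column operation along $V_J$ from column $l$ to column $k$ with factor $-\alpha$; if $(k,l) \in L$ then $\rho(g) = g$ and $A.g = \pi_{\!_J}(x_{lk}(\alpha) A x_{lk}(-\alpha))$, the combined truncated row-and-column operation. Nothing new has to be verified here, since the case distinction has already been done inside \ref{2.15}.

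For the scalar I would substitute the definition $f(g^{-1}) = \rho(g)g^{-1} - E$ from Lemma \ref{2.10}. If $(i,j) \in L$ then $\rho(g) = g$ and $f(g^{-1}) = gg^{-1} - E = 0$, so $\chi_{_A}(f(g^{-1})) = 1$, giving the second branch of (\ref{2.18}). If $(i,j) \in J$ then $\rho(g) = E$ and $f(g^{-1}) = x_{ij}(-\alpha) - E = -\alpha e_{ij} \in V_J$; recalling from \ref{2.14} that $\chi_{_A} = \theta \circ \chi_{_A}^{*}$ and that $\chi_{_A}^{*}(-\alpha e_{ij}) = -\alpha A_{ij}$ yields $\chi_{_A}(f(g^{-1})) = \theta(-\alpha A_{ij})$, giving the first branch.

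Finally, the lidempotent formula (\ref{2.199}) follows from (\ref{2.18}) by the identification $e_{_A} \leftrightarrow q^{-|J|}\chi_{_{-A}}$ of Definition \ref{2.177}. Since $A.g = \pi_{\!_J}(\rho(g)^t A g^{-t})$ depends $\F_q$-linearly on $A$, we have $(-A).g = -B$ where $B = A.g$, so applying (\ref{2.18}) to $\chi_{_{-A}}$ produces the scalar $\theta(-\alpha(-A)_{ij}) = \theta(\alpha A_{ij})$ together with the character $\chi_{_{-B}}$, and this rescales back to $e_{_B}$ in the lidempotent basis. The only real obstacle here is bookkeeping the two sign flips (passing $A \rightsquigarrow -A$ and $\chi \rightsquigarrow e$) consistently, but the linearity of $A \mapsto A.g$ makes that routine.
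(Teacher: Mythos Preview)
Your proof is correct and follows essentially the same approach as the paper: both invoke Lemma~\ref{2.15} for the description of $B$, compute the scalar $\chi_{_A}(f(g^{-1}))$ by distinguishing $(i,j)\in J$ versus $(i,j)\in L$ using the formula $f(g^{-1})=\rho(g)g^{-1}-E$ from \ref{2.10}, and then pass to the lidempotent formula via the identification $e_{_A}\leftrightarrow q^{-|J|}\chi_{_{-A}}$ of \ref{2.177}. Your treatment of the sign flip in~(\ref{2.199}) is slightly more explicit than the paper's, but the argument is the same.
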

\begin{proof}
Everything is already shown except (\ref{2.18}) and (\ref{2.199}). By \ref{2.11}
\[
\chi_{_A}g=\chi_{_A}(f(g^{-1}))\chi_{_B}=\theta(\chi_{_A}^*(f(g^{-1})))\chi_{_B}.
\]
For $(i,j)\in L, f(g^{-1})=0$ and the claim follows (comp. \ref{2.11} 4)). So let $(i,j)\in J$. Then using \ref{2.10} we obtain $\chi_{_A} ^*(f(g^{-1}))=\chi_{_A}^*(g^{-1}-E)=\chi_{_A}^* (-\alpha e_{ij})=-\alpha A_{ij}$ and (\ref{2.18}) follows.   Then by the identification given in \ref{2.177},  (\ref{2.199}) follows .
\end{proof}

\begin{Remark}\label{2.19}
The special case $L=\emptyset, J=K$ in \ref{2.8} yields a supercharacter theory of the pattern subgroup $U_J$ of $U$. In this case the action of $U_J$ on $\hat V_J$ can entirely be described by (ordered) sequences of truncated column operations.
\end{Remark}

\begin{Example}\label{ExLi}
We illustrate the lidempotent   $e_{_A}\in \C V_J$ by a triangle, omitting superfluous zeros in the upper half of matrix $A\in V$ and indicating positions not belonging to $J$ by "$\ddagger$" . Let $n=6$ and $V=V_J$ where $J$ is obtained by taking out column 2 and column 4  from $\Phi^-$. Let $A=\sum_{(i,j)\in J}A_{ij}e_{ij}\in V$, $A_{ij}\in \F_q$, and let $\alpha\in\F_q$. Then by (\ref{2.199}) we have

%
%
%
%
%
%
%

\begin{center}
\begin{picture}(180, 95)
\put(-75,45){$e_{_A}x_{53}(\alpha)=\theta(\alpha A_{53})$}
\put(30,0){\line(1,0){100}}
\put(30,0){\line(0,1){100}}
\put(30,100){\line(1,-1){100}}
\put(35,78){0}
\put(32,63){\small$A_{21}$}
\put(32,48){\small$A_{31}$}
\put(32,33){\small $A_{41}$}
\put(32,18){\small $A_{51}$}
\put(32,3){\small $A_{61}$}
\put(50,63){0}
\put(50,33){$\ddagger$}
\put(50,48){$\ddagger$}
\put(50,18){$\ddagger$}
\put(50,3){$\ddagger$}
\put(60,3){\small $A_{63}$}
\put(60,33){\small $A_{43}$}
\put(60,18){\small $A_{53}$}
\put(65,48){0}

\put(80,33){0}
\put(80,18){$\ddagger$}
\put(80,3){$\ddagger$}

\put(95,18){$0$}
\put(92,3){$\beta$}

\put(110,3){$0$}

\put(150,45) {where $\beta=A_{65}-\alpha A_{63}$.}

\end{picture}
\end{center}
\end{Example}

\begin{Notation}\label{orbit}
In the situation of \ref{2.8} we denote the $U_K$-orbit of $\cE_J$ containing $e_{_A}\in \cE_J, A\in V_J$ by $\cO_A^J$. In the special case $L=\emptyset, J=K=\Phi^-$, we drop the sub- and super index $J$, that is $V=V_{\Phi^-}, \cE=\cE_{\Phi^-}$ and $\cO_{_A}=\cO_{_A}^{\Phi^-}$ for $A\in V$.
\end{Notation}

\section{$\lambda$-flags} \label{sec3}
For $n\in \N$ we denote the symmetric group on $n$ letters by $\fS_n$. A {\bf composition} $\lambda$ of $n$, denoted by $\lambda\vDash n$  is a finite sequence $\lambda=(\lambda_1, \ldots, \lambda_k)$ of non negative integers $\lambda_i\in \N$, $1\leqslant i \leqslant k$, whose sum is $n$. 
The {\bf (Young) diagram} of a composition $\lambda$ is the subset
$$[\lambda]=\{(i,j)\,|\,1\leqslant j\leqslant \lambda_i\text{ and }i\geqslant 1\}$$ of $\N \times \N.$ If $(i,j)\in [\lambda]$, then $(i,j)$ is called a {\bf node} of $\lambda$. We represent the diagram as an array of boxes in the plane.
Suppose $\lambda$ is a composition of $n$. A {\bf $\lambda-$tableau} is a bijection
$$\t : [\lambda]\rightarrow \{1,2,\cdots,n\};$$ we say that $\t$ has  shape $\lambda$ and write Shape$(\t)=\lambda.$
We represent a tableau $\t$ by a labeled diagram replacing every node $(i,j)$ in $[\lambda]$  by it's image in $\{1,2,\cdots,n\}$ under the map $\t.$ For example,
$$\t_1=\begin{tabular}{|c|c|c|}\cline{1-2}1&2\\\hline 3&4&5\\\hline 6&7\\\cline{1-2}\end{tabular}
\text{ and }\t_2=\begin{tabular}{|c|c|c|}\cline{1-2}1&4\\\hline 2&5&7\\\hline 3&6\\\cline{1-2}\end{tabular}$$ are two $\lambda$-tableaux, where $\lambda=(2,3,2).$
The symmetric group $\mathfrak S_n$ acts on the set of $\lambda$-tableaux from the right by permuting the integers $1,2,3,\dots,n.$
In the Example above, $$\t_1(2\,4\,5\,7\,6\,3)=\t_2.$$

A $\lambda$-tableau is {\bf row standard}, if the entries increase along rows from left to right.  The set of $\lambda$-tableaux (resp. row standard $\lambda$-tableuax) is denoted by $ \Tabl(\lambda)$, (resp. $\RStd(\lambda)$). The {\bf initial $\lambda$-tableau} $\t^\lambda$ is the $\lambda$-tableau, where the numbers $1, \ldots, n$ are inserted in order along the rows down row by row. For $\s\in \Tabl(\lambda)$, we denote the permutation $w\in \fS_n$ with $\t^\lambda w=\s$ by $\d(\s)$. For $1\leqslant i \leqslant n$, $\s\in \Tabl(\lambda)$, $\row_\s(i)=m$ if the $m$-th row counted from the top in $\s$ contains $i$.

For $\lambda\vDash n$, the {\bf row stabilizer} of $\t^\lambda$ in $\fS_n$ is the {\bf standard Young subgroup} $\fS_\lambda=\fS_{\{1, \ldots, \lambda_1\}}\times \fS_{\{\lambda_1+1, \ldots, \lambda_1+\lambda_2\}}\times \cdots \cong \fS_{\lambda_1}\times \cdots \times \fS_{\lambda_k}\leqslant \fS_n$, $\lambda=(\lambda_1, \ldots \lambda_k)$. For any $\s\in \Tabl(\lambda)$, the row stabilizer of $\s$ is then $\d(\s)^{-1}\fS_\lambda \d(\s)$. The set
\begin{equation}\label{3.1}
\cD_\lambda=\{\d(\s)\,|\, \s\in \RStd(\lambda)\}
\end{equation}
is a set of right coset representatives of $\fS_\lambda$ in $\fS_n$, called {\bf distinguished coset representatives,} (see e.g. [\cite{carter}]).


We identify $\fS_n$ with the Weyl group of $GL_n(q)$, i.e. with the set of permutation matrices in $GL_n(q)$. Thus $w\in \fS_n$ acts in the natural basis $e_1, \ldots, e_n$ of $\F_q^n$ by $e_{i}w=e_{iw}$. Moreover, for $1\leqslant i,j \leqslant n, \alpha\in \F_q$, one checks directly that: $w e_{ij}=e_{iw^{-1},\,j}\text{ and } e_{ij} w=e_{i,\,jw}
$. Hence
\begin{equation}
 x_{ij}(\alpha)^w=w^{-1}x_{ij}(\alpha)w=w^{-1}(E+\alpha e_{ij})w=E+\alpha w^{-1} e_{ij}w=x_{iw,jw}(\alpha).
\end{equation} 


For $\lambda=(\lambda_1, \cdots, \lambda_k)\vDash n$ we define a {\bf$\lambda$-flag} in $V=\F_q^n$ to be a sequence
\[
V=V_0> V_1>\cdots >V_{k-1}\geqslant V_k=(0)
\]
of subspaces $V_i$ of $V$ ($i=0, \ldots,k$) such that $\lambda_i=\dim V_{i-1}-\dim V_i$ holds. We denote the set of $\lambda$-flags in $V$ by $\cF(\lambda)$. Obviously $GL_n(q)$ acts on $\cF(\lambda)$ by left- and right multiplication transitively on both sides.


We set $\Lambda_i=\lambda_1+\lambda_2+\cdots+\lambda_i$ for $i=0,1,\ldots,k$. For the natural basis $e_1, \cdots, e_n$ of $V=\F_q^n$ set $V_i=\langle e_{_{\Lambda_i+1}}, e_{_{\Lambda_i+2}}, \cdots ,e_{n}\rangle$. Then the $\lambda$-flag 
$V=V_0> V_1> \cdots > V_{k-1}> V_k=(0)$
is called standard $\lambda$-flag and its stabilizer in $GL_n(q)$ acting from the right on $\cF(\lambda)$ is the {\bf standard parabolic subgroup} $P_\lambda$. By general theory, the linear permutation module $\C \cF(\lambda)$ spanned by $\cF(\lambda)$  under the right action of $GL_n(q)$ is isomorphic to the trivial $\C P_\lambda$-module induced to $GL_n(q)$, denoted by $\Ind_{P_\lambda}^{GL_n(q)} \C$.

\begin{Defn}\label{3.3}
Let $\lambda\vDash n$. Then the subsets
\begin{eqnarray*}
J_\lambda&=&\{(i,j)\in \Phi\,|\,\row_{\t^\lambda}(i)\leqslant \row_{\t^\lambda}(j)\}\\
J_\lambda^=&=&\{(i,j)\in \Phi\,|\,\row_{\t^\lambda}(i)= \row_{\t^\lambda}(j)\}\\
J_\lambda^<&=&\{(i,j)\in \Phi\,|\,\row_{\t^\lambda}(i)<\row_{\t^\lambda}(j)\}
\end{eqnarray*}
are closed subsets of $\Phi$ and $P_\lambda=\langle B^+, X_{ij}\,|\, (i,j)\in J_\lambda \rangle$ is the standard parabolic subgroup  containing the upper Borel subgroup $B^+$ of invertible upper triangular matrices. Moreover $L_\lambda=\langle T, X_{ij}\,|\, (i,j)\in J_\lambda^= \rangle$ is a Levi subgroup of $P_\lambda$ ({\bf standard Levi subgroup}) isomorphic to $GL_{\lambda_1}(q)\times \cdots GL_{\lambda_k}(q)$, $\lambda=(\lambda_1,\ldots, \lambda_k)\vDash n$, and $U_\lambda=U_{J_\lambda^<}\leqslant U^+$ is the unipotent radical of $P_\lambda$. In particular, $U_\lambda  \unlhd P_\lambda$, $U_\lambda\cap L_\lambda=(1)$ and $P_\lambda=U_\lambda L_\lambda=U_\lambda\rtimes \ L_\lambda$.
\end{Defn}

For later use we set $U_{\lambda}^-=U_{J_\lambda^>}$ where $J_\lambda^>=\{(i,j)\in \Phi\,|\,\row_{\t^\lambda}(i)>\row_{\t^\lambda}(j)\}$. Then $P_\lambda^-=L_\lambda U_\lambda^-$ is the standard parabolic subgroup containing $B^-$, the Borel subgroup of all lower triangular invertible matrices.


Let $F: V=V_0>V_1>\cdots >V_{k-1}> V_k=(0)$ be a $\lambda$-flag. A basis $\{v_1,\cdots,v_n\}$ of $V$ is {\bf $F$-adapted}, if $v_{_{\Lambda_i+1}}, \cdots,v_n$ spans $V_i$ ($i=0,\ldots,k-1$). Writing each $v_i$ (uniquely) as linear combination $v_i=\sum_{j=1}^n \beta_{ij}e_j$, we obtain an invertible matrix $B\in M_n(q)$ with $B_{ij}=\beta_{ij}$, whose row vectors are $v_i$. We illustrate this as follows:
\begin{equation}\label{compartment}
\hspace{-6cm}B=\left(
\begin{picture}(100,55)
\put(10,-40) { {compartment \,\, $k$}}
\multiput(-8,-20)(10,0){16}{\line(1,0){5}}

\put(110,-10) {\small $\bigg\}$ basis of $V_{k-2}$ mod $V_{k-1}(\cong \F_q^{\lambda_{k-1}})$}
\put(110,-36) {\small $\bigg\}$ basis of $V_{k-1}(\cong \F_q^{\lambda_k})$}

\multiput(-8,5)(10,0){16}{\line(1,0){5}}
\put(8,-10){{compartment $k-1$}}

\put(8,40){{compartment \quad $1$}}
\multiput(-8,30)(10,0){16}{\line(1,0){5}}
\put(110,40) {\small $\bigg\}$ basis of $V=V_0$ mod $V_{1}(\cong \F_q^{\lambda_{1}})$}

\put(43,12){\large $\vdots$}
\put(133,12){\large $\vdots$}
\end{picture}
\right)
\end{equation}
Let $1\leqslant i\leqslant n$. If row $i$ lies in the $r$-th compartment of $B$ for some $r\in \{1,2,\ldots,k\}$, then we denote: $\comp_{_B}(i)=r$.
Using basic linear algebra  the following is easy to see:
%
%

\begin{Lemma}\label{3.6}
$P_\lambda$ acts by left multiplication on the set of $F$-adapted bases $B\in GL_n(q)$ of $V$ transitively, inducing a bijection:
$$
\{\text{right } P_\lambda\text{-cosets in }GL_n(q)\}=\{\text{left }P_\lambda \text{-orbits on }GL_n(q)\} \longleftrightarrow \cF_\lambda=\{\lambda\text{-flags in }V=\F_q^n\}.
$$ \hfill $\square$
\vspace{-0.99cm}
$$\hspace{5.4cm}\text{\scriptsize bij.}$$ \end{Lemma}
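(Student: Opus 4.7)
My plan is to identify each $B\in GL_n(q)$ with the basis $v_j := e_jB$ of $V=\F_q^n$ given by its rows, so that $B$ is $F$-adapted precisely when $F = F_{\mathrm{std}}\cdot B$, where $F_{\mathrm{std}}$ is the standard $\lambda$-flag; indeed, $V_i\cdot B = \langle e_{\Lambda_i+1}B,\ldots,e_n B\rangle = \langle v_{\Lambda_i+1},\ldots,v_n\rangle$. This translates the claim into a statement about left $P_\lambda$-orbits on $GL_n(q)$ versus right $GL_n(q)$-translates of $F_{\mathrm{std}}$, which is the setting of the orbit-stabiliser theorem.

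The key step is to show that $P_\lambda$ permutes the set of $F$-adapted bases transitively. First I would extract from Definition \ref{3.3} the concrete description of $P_\lambda$ as the set of $p\in GL_n(q)$ with $p_{jl}=0$ whenever $\row_{\t^\lambda}(l)<\row_{\t^\lambda}(j)$, equivalently whenever $l\leqslant \Lambda_i<j$ for some $i$. The $j$-th row of $pB$ is $\sum_l p_{jl}v_l$, so for $j>\Lambda_i$ it is a linear combination of $v_{\Lambda_i+1},\ldots,v_n$ and hence lies in $V_i$; since $pB$ is invertible these $n-\Lambda_i$ rows are independent and must span $V_i$, so $pB$ is again $F$-adapted. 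For transitivity, given another $F$-adapted basis $B'$, set $p:=B'B^{-1}$, so that $B'=pB$; running the row computation in reverse, the containment $v'_j\in V_i = \langle v_{\Lambda_i+1},\ldots,v_n\rangle$ for each $j>\Lambda_i$ forces $p_{jl}=0$ whenever $l\leqslant \Lambda_i<j$, which is exactly the defining pattern of $P_\lambda$.

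Combining these, the assignment $P_\lambda B\mapsto F_{\mathrm{std}}\cdot B$ is well defined by the first step, surjective because every $\lambda$-flag admits an adapted basis (extend bases of $V_i$ up to $V_{i-1}$ inductively), and injective by transitivity, which yields the desired bijection between right $P_\lambda$-cosets (= left $P_\lambda$-orbits on $GL_n(q)$) and $\cF(\lambda)$. The only real hurdle is keeping the conventions straight: $P_\lambda$ was defined as the right-stabiliser of $F_{\mathrm{std}}$, while here it acts on row-basis matrices from the left, so the substantive content is really the block-zero computation showing that this left action does preserve the flag assigned to $B$.
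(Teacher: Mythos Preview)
Your proof is correct and is precisely the ``basic linear algebra'' argument the paper alludes to but omits (the paper gives no proof beyond that phrase and the $\square$). Your identification of $P_\lambda$ with the block upper-triangular pattern $p_{jl}=0$ for $\row_{\t^\lambda}(j)>\row_{\t^\lambda}(l)$ matches Definition~\ref{3.3} and the right-stabiliser description of $P_\lambda$, and the transitivity/injectivity steps are the standard ones.
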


Obviously the sets of the left $P_\lambda$-orbits on $GL_n(q)$ and the right cosets $P_\lambda g$ ($g\in GL_n(q)$) are the same. The next result gives a detailed description of this. Recall that $U=U^-$ is the lower unitriangular group. For the next result see Cf. [\cite{jamesbook}, Theorem 7.5].

\begin{Prop}\label{3.7}
Let $\lambda\vDash n$. Then $\cD_\lambda$ is a set of $P_\lambda$-$U$ double coset representatives in $GL_n(q)$. Moreover for $d\in \cD_\lambda$, $P_\lambda dU=\bigcup_{u\in (U_\lambda^-)^d\cap U}P_\lambda d u$ is a decomposition of $P_\lambda d U$ into right $P_\lambda$-cosets in $GL_n(q)$. Thus  $\{du\,|\,d\in \cD_\lambda, u\in (U_\lambda^-)^d\cap U\}$ is a set of   right coset representatives of $P_\lambda$ in $GL_n(q)$.
\end{Prop}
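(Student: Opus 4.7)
The statement has two parts: (i) $\cD_\lambda$ is a full set of $P_\lambda$-$U$ double coset representatives in $GL_n(q)$, and (ii) inside each double coset $P_\lambda dU$ the family $\{du : u\in(U_\lambda^-)^d\cap U\}$ is a right $P_\lambda$-transversal. I will treat them in this order: part (i) via the Bruhat decomposition, part (ii) via the root-subgroup machinery from Section~\ref{sec2}.

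For (i), I would start from the parabolic Bruhat decomposition $GL_n(q)=\bigsqcup_{d\in\cD_\lambda}P_\lambda dB^-$, itself a standard consequence of the mixed Bruhat $GL_n(q)=\bigsqcup_{w\in\fS_n}B^+ w B^-$ together with $\fS_n=\bigsqcup_{d\in\cD_\lambda}\fS_\lambda d$ and $\fS_\lambda\subseteq L_\lambda\subseteq P_\lambda$. Since $B^-=TU$, $T\subseteq P_\lambda$, and $dTd^{-1}=T$, this simplifies to $P_\lambda dB^-=P_\lambda dTU=P_\lambda TdU=P_\lambda dU$, yielding $GL_n(q)=\bigsqcup_{d\in\cD_\lambda}P_\lambda dU$ at once.

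For (ii), the plan is to decompose $U$ as a product of two pattern subgroups adapted to $d$. Writing $(J)^d := \{(id,jd) : (i,j)\in J\}$ for the $d$-conjugate of a root subset (mirroring the conjugation formula (3.2)), set
\begin{equation*}
J_1 := \Phi^- \cap (J_\lambda^>)^d, \qquad J_2 := \Phi^- \cap (J_\lambda^<\cup J_\lambda^=)^d.
\end{equation*}
Since $\Phi=J_\lambda^<\sqcup J_\lambda^=\sqcup J_\lambda^>$ is permuted by $d$, we have $\Phi^-=J_1\sqcup J_2$, and both pieces are closed in $\Phi^-$ (transitivity of the row-comparison defining $J_\lambda^>$ passes through $d$ along any chain $(i,j),(j,k)$). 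The corresponding pattern subgroups satisfy $(U_\lambda^-)^d\cap U=U_{J_1}$ and $d^{-1}P_\lambda d\cap U=U_{J_2}$, the torus part of $d^{-1}P_\lambda d$ being killed on intersection with $U$. Choosing any linear ordering of $\Phi^-$ in which all roots of $J_2$ precede those of $J_1$, the unique-product statement (\ref{2.3}) produces a unique factorisation $u=u_2u_1$ with $u_j\in U_{J_j}$ for every $u\in U$.

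The two pieces combine cleanly: given $u=u_2u_1$ we have $du_2d^{-1}\in P_\lambda$, so $P_\lambda du=P_\lambda(du_2d^{-1})du_1=P_\lambda du_1$ with $u_1\in(U_\lambda^-)^d\cap U$, giving the spanning; distinctness follows from $(U_\lambda^-)^d\cap d^{-1}P_\lambda d=(U_\lambda^-\cap P_\lambda)^d=\{E\}$, equivalently $U_{J_1}\cap U_{J_2}=U_{J_1\cap J_2}=\{E\}$. The main obstacle is part (ii): verifying closedness of $J_1,J_2$, identifying them with the intersections of the conjugated groups, and establishing the ordered factorisation $U=U_{J_2}U_{J_1}$. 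Once the root combinatorics is arranged, all three fall out of (\ref{2.3}); part (i) is essentially a citation to standard BN-pair theory.
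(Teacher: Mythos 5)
The paper gives no proof of Proposition~\ref{3.7}; it merely cites [\cite{jamesbook}, Theorem~7.5]. Your proposal supplies a self-contained argument, and, after checking the details, I believe it is correct.

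The pieces fit together as you say. For (i), passing from the Birkhoff (mixed Bruhat) decomposition $GL_n(q)=\bigsqcup_w B^+wB^-$ to $GL_n(q)=\bigsqcup_{d\in\cD_\lambda}P_\lambda dB^-$ and then collapsing $B^-=TU$ with $T\subseteq P_\lambda$, $dTd^{-1}=T$, is legitimate standard BN-pair material; be aware, though, that the sentence ``itself a standard consequence of'' hides the real content (disjointness of the $P_\lambda dB^-$, equivalently that $w\mapsto P_\lambda wB^-$ descends to a bijection $\fS_\lambda\backslash\fS_n\to P_\lambda\backslash G/B^-$), which one either cites or proves via length additivity $\ell(sd)=\ell(s)+\ell(d)$ for $s\in\fS_\lambda$, $d\in\cD_\lambda$. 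For (ii), your sets $J_1$ and $J_2$ are precisely the sets that the paper later calls $J(\s)$ and $P(\s)$ in Definition~\ref{4.4}, and the identities $U_{J_1}=(U_\lambda^-)^d\cap U$, $U_{J_2}=P_\lambda^d\cap U$, together with closedness, are exactly Lemma~\ref{4.3} and Proposition~\ref{4.5}~1),~5). So in effect you reprove those later facts here; this is harmless because their proofs do not rely on \ref{3.7}. The ordered-product factorisation $U=U_{J_2}U_{J_1}$ via (\ref{2.3}) with $J_2$ listed before $J_1$, the absorption $P_\lambda du_2u_1=P_\lambda(du_2d^{-1})du_1=P_\lambda du_1$, and the injectivity from $U_{J_1}\cap U_{J_2}=U_{J_1\cap J_2}=\{E\}$ all check out. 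In short: the paper cites, you prove, and your proof is a correct elementary argument based on the same root-subgroup combinatorics (\ref{2.3}) that the paper develops for other purposes in Sections~\ref{sec2}--\ref{sec4}.
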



Since $U_\lambda^-=\langle X_{ij}\,|\,i,j)\in \Phi,\row_{\t^\lambda}(i)>\row_{\t^\lambda}(j)\rangle$ and for $1\leqslant i\leqslant n,$ $i$ occupies the position in $\s=\t^\lambda d$ which is occupied by $id^{-1}$ in $\t^\lambda$, we have:
\begin{Lemma}\label{3.8}
Let $\lambda\vDash n, d\in \cD_\lambda, \s=\t^\lambda d$. Then
$
(U_\lambda^-)^d\cap U=U_J, \text{ where } J= \{(i,j)\in \Phi^-| \row_\s(i)>\row_\s(j)\}. $
\hfill $\square$
\end{Lemma}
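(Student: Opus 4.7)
The plan is to reduce everything to a statement about root subgroups, exploiting the fact that conjugation by a permutation matrix $d$ simply relabels matrix indices. I would proceed in four short steps.

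First I would rewrite both sides as pattern subgroups. By Definition~\ref{3.3} (and the paragraph just after it), $U_\lambda^- = U_{J_\lambda^>}$ is generated by the root subgroups $X_{ij}$ with $(i,j)\in J_\lambda^>$, and one checks that $J_\lambda^>\subseteq \Phi^-$ automatically, because in $\t^\lambda$ the rows are filled in ascending order. The set $J$ in the statement is obviously closed in $\Phi^-$ (rows being totally ordered by integers), so $U_J$ makes sense as a pattern subgroup of $U$.

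Next I would translate conjugation by $d\in\fS_n$ into a relabelling of positions. Using the identity $x_{ij}(\alpha)^d = x_{id,jd}(\alpha)$ displayed before Definition~\ref{3.3}, conjugation by $d$ sends the root subgroup $X_{ij}$ to $X_{id,jd}$. Hence
\[
(U_\lambda^-)^d=\langle X_{id,jd}\mid (i,j)\in J_\lambda^>\rangle,
\]
equivalently, as a set $(U_\lambda^-)^d=\{E+d^{-1}Ad\mid A\in M_n(q),\ \supp A\subseteq J_\lambda^>\}$, so its elements are precisely $E+A'$ with $\supp(A')\subseteq J_\lambda^>\cdot d:=\{(id,jd)\mid (i,j)\in J_\lambda^>\}$.

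Then I would intersect with $U=U_{\Phi^-}$. An element $E+A'\in(U_\lambda^-)^d$ lies in $U$ exactly when $\supp(A')\subseteq\Phi^-$, i.e.\ when $\supp(A')\subseteq J_\lambda^>\cdot d\,\cap\,\Phi^-$. So
\[
(U_\lambda^-)^d\cap U = U_{\,J_\lambda^>\cdot d\,\cap\,\Phi^-}.
\]

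Finally I would match the resulting index set with $J$. As observed in the hint preceding the lemma, the entry at the position of $i$ in $\s=\t^\lambda d$ is occupied by $id^{-1}$ in $\t^\lambda$; equivalently $\row_\s(i')=\row_{\t^\lambda}(i'd^{-1})$. Substituting $(i',j')=(id,jd)$ converts the condition $\row_{\t^\lambda}(i)>\row_{\t^\lambda}(j)$ into $\row_\s(i')>\row_\s(j')$, while the extra restriction $(i',j')\in\Phi^-$ forces $i'>j'$. Hence $J_\lambda^>\cdot d\cap\Phi^-=\{(i',j')\in\Phi^-\mid \row_\s(i')>\row_\s(j')\}=J$, proving the claim. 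The only delicate point is bookkeeping the $d$ vs.\ $d^{-1}$ in the row correspondence; once this is fixed the rest is a direct translation between pattern descriptions.
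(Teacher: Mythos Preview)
Your proof is correct and follows exactly the line the paper sketches in the sentence immediately preceding the lemma (the paper itself gives no further proof, marking the result with $\square$). You have simply spelled out in detail the translation $\row_\s(i')=\row_{\t^\lambda}(i'd^{-1})$ and the intersection of pattern sets that the paper leaves implicit; the only care needed, as you note, is tracking $d$ versus $d^{-1}$, and you handle this correctly.
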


Let $\lambda, d$ and $\s$ as above. Next we shall describe the matrices in the subset $d U_J$ of $GL_n(q)$. Let $u\in U_J$. We consider the row vectors of $du\in GL_n(q)$ as an $F$-adapted basis of some $\lambda$-flag $F$. By \ref{3.8} $u_{ij}\neq0$ implies 
\begin{equation}\label{3.9}
(i=j\text{ and } u_{ii}=1)\quad \text{ or } \quad (i>j\text{ and }\row_\s(i)>\row_\s(j)).
\end{equation}
Note that the left action of $d\in \fS_n$ on $u$ is just permuting the rows of $u$. Thus $(du)_{kr}=u_{kd,r}$ for all $(k,r)\in \Phi$.
So $(du)_{kl}\neq0$ implies by (\ref{3.9})
\[
(kd=r\text{ and } (du)_{k,kd}=1)\quad \text{ or } \quad (kd>r\text{ and }\row_\s(kd)>\row_\s(r)).
\]
Therefore one can see immediately that  the values on the positions $(k,kd)$ in $du$ are 1's and these are  the most right hand side nonzero entries in each row $k\in \{1, \ldots, n\}$. Now assume $(du)_{kr}\neq0$ and $kd>r$. Recall that for $1\leqslant i\leqslant n$, $i$ occupies the position in $\s$ which is occupied by $id^{-1}$ in $\t^\lambda$. Thus $\row_\s(kd)>\row_\s(r)$ if and only if $\row_{\t^\lambda}(k)>\row_{\t^\lambda}(rd^{-1})$ and hence the position $(k,r)$ lies in a lower compartment than the position $(rd^{-1},r)$, on which sits  a  one. 
So we have shown: 
\begin{Theorem}\label{xlambda}
For $\lambda\vDash n$ we denote
\[
\fX_\lambda=\{du\,|\,d\in \cD_\lambda, u\in (U_\lambda^-)^d\cap U\},
\]
called {\bf $\lambda$-normal} matrices. Then any $A\in \fX_\lambda$  satisfies:
\begin{enumerate}
\item [1)]  There exists $d\in \cD_\lambda$ such that $A_{i, id}=1$ is the last nonzero entry in row $i$ of $A$. We call $A_{i, id}=1$ {\bf ``the last one''}, for $1\leqslant i \leqslant n$.  {Define $\tab A=\t^\lambda d\in \RStd(\lambda)$.}  
\item [2)] $A_{r, id}\neq0$ for $1\leqslant i \leqslant n$ implies $r=i$ (and then $A_{r, id}=1$) or $\comp_{_A}(r)>\comp_{_A}(i)$, that is, the compartment  containing row $r$ is lower than the compartment containing row $i$. \hfill $\square$
\end{enumerate}
\end{Theorem}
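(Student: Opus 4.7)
My plan is to read off both assertions directly from the factorisation $A = du$ together with Lemma \ref{3.8}, which pins down the support pattern of $u$. Concretely, if $A = du \in \fX_\lambda$ with $d \in \cD_\lambda$ and $\s = \t^\lambda d$, then Lemma \ref{3.8} gives $u \in U_J$, so $u$ is lower unitriangular with diagonal $1$'s and its off-diagonal support lies in $J = \{(i,j) \in \Phi^- \mid \row_\s(i) > \row_\s(j)\}$. Hence $u_{ab} \neq 0$ with $a \neq b$ forces both $a > b$ and $\row_\s(a) > \row_\s(b)$.

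Next I would pass to $A = du$. The permutation matrix $d$ acts by permuting rows, so $(du)_{k,r} = u_{kd,r}$, and therefore $A_{k,r} \neq 0$ iff $u_{kd,r} \neq 0$. By the previous paragraph this means either $kd = r$ (giving $A_{k,kd} = 1$) or $kd > r$ together with $\row_\s(kd) > \row_\s(r)$. The first alternative immediately yields part~(1): in row $k$ the rightmost non-zero entry of $A$ is the diagonal entry of $u$ sitting at column $kd$, and it equals $1$; since the same $d \in \cD_\lambda$ works for every $k$, the tableau $\tab A = \t^\lambda d$ is well defined and records precisely the positions of these last ones.

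For part~(2), fix $i$ and suppose $A_{r,id} \neq 0$ with $r \neq i$. The above analysis applied to column $id$ forces $rd > id$ and $\row_\s(rd) > \row_\s(id)$. Here comes the key bookkeeping step, namely the dictionary between $\s$ and $\t^\lambda$. Since $\s = \t^\lambda d$ is obtained by replacing each entry $m$ of $\t^\lambda$ by $md$, the integer $m$ sits in $\s$ at the position that $md^{-1}$ occupies in $\t^\lambda$, so $\row_\s(m) = \row_{\t^\lambda}(md^{-1})$. Applying this to $m = id$ and $m = rd$ gives $\row_\s(id) = \row_{\t^\lambda}(i)$ and $\row_\s(rd) = \row_{\t^\lambda}(r)$. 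Since the compartments in the $F$-adapted picture~(\ref{compartment}) coincide with the row-blocks of $\t^\lambda$, one has $\comp_A(k) = \row_{\t^\lambda}(k)$, and the inequality $\row_\s(rd) > \row_\s(id)$ translates to $\comp_A(r) > \comp_A(i)$, which is exactly assertion~(2).

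The only genuinely substantive point in the whole argument is the $\row_\s \leftrightarrow \row_{\t^\lambda}$ translation induced by $d$; once that dictionary is in place, both parts fall out of a direct comparison of $(du)_{kr} = u_{kd,r}$ with the support description of Lemma \ref{3.8}. I anticipate no real obstacle beyond carefully keeping track of the two tableau conventions for rows versus the matrix-level compartment structure.
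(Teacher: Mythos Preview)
Your proposal is correct and follows essentially the same route as the paper's own argument: both use Lemma~\ref{3.8} to describe $\supp(u)$, then pass to $(du)_{kr}=u_{kd,r}$, and finally invoke the dictionary $\row_\s(m)=\row_{\t^\lambda}(md^{-1})$ to convert the $\row_\s$-inequality into a compartment inequality. The only cosmetic difference is that the paper carries out part~(2) for an arbitrary column~$r$ and then specialises, whereas you work directly with the column $id$ from the start; the substance is identical.
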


\begin{Remark}\label{3.11}
By \ref{3.7}, $\fX_\lambda$ is a set of    right coset representatives of $P_\lambda$ in $GL_n(q)$. By \ref{3.6}, (more precisely, using suitable the row operations ,  each  left $P_\lambda$-orbit $P_\lambda A\subseteq GL_n(q)$ contains precisely one row reduced form ({\bf normal form}) $A^\lambda\in P_\lambda A\cap \fX_\lambda$. So from now on, each $\lambda$-flag can be  denoted by an unique element in the set $\fX_\lambda$.
\end{Remark}

\begin{Defn}\label{circ}
Let $\lambda\vDash n$. Define a right action of $GL_n(q)$ on $\fX_\lambda$ by
\[
\bullet: (\fX_\lambda, GL_n(q))\rightarrow \fX_\lambda: (A, g)\mapsto A\bullet g=(Ag)^\lambda\quad \text{ for }A\in \fX_\lambda, \,  g\in GL_n(q).
\]
 \end{Defn}
\begin{Lemma}\label{3.13}
Let $\C \fX_\lambda$ be the vector space over $\C$ with basis $\fX_\lambda$. Then under the ``$\bullet$''-action, $\C \fX_\lambda$ becomes a permutation module isomorphic to $M(\lambda):=\Ind_{P_\lambda}^{GL_n(q)} \C$, where the isomorphism is given by $A\in \fX_\lambda \mapsto \overline {P_\lambda} A$, with $\overline { P_\lambda}=\sum_{h\in P_\lambda}h$. \hfill $\square$
\end{Lemma}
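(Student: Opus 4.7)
The statement is essentially a bookkeeping result: once one knows $\fX_\lambda$ is a complete set of right coset representatives of $P_\lambda$ in $GL_n(q)$, identifying $\C\fX_\lambda$ with $M(\lambda)$ is almost formal. So my plan is to verify three things in sequence.

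First, I would check that $\bullet$ really defines a right $GL_n(q)$-action on $\fX_\lambda$. By \ref{3.7} together with \ref{3.11}, every element of $GL_n(q)$ lies in a unique right $P_\lambda$-coset represented by an element of $\fX_\lambda$, and $A \mapsto A^\lambda$ is the corresponding normal form map $GL_n(q) \to \fX_\lambda$. For $A \in \fX_\lambda$ and $g \in GL_n(q)$, the element $A \bullet g = (Ag)^\lambda$ is by construction in $\fX_\lambda$. The identity axiom $A \bullet E = A$ is immediate from uniqueness of normal forms since $A \in \fX_\lambda$. For associativity, note that $(Ag)^\lambda \in P_\lambda A g$ means $(Ag)^\lambda = p A g$ for some $p \in P_\lambda$, hence $(Ag)^\lambda h = p A g h \in P_\lambda A g h$; applying the normal form map gives $((Ag)^\lambda h)^\lambda = (Agh)^\lambda$, i.e.\ $(A \bullet g) \bullet h = A \bullet (gh)$.

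Second, I would define the linear map
\[
\phi : \C\fX_\lambda \longrightarrow M(\lambda) = \Ind_{P_\lambda}^{GL_n(q)} \C, \qquad A \longmapsto \overline{P_\lambda}\, A,
\]
extended $\C$-linearly. Since $\fX_\lambda$ is a transversal of the right $P_\lambda$-cosets in $GL_n(q)$, the set $\{\overline{P_\lambda} A \mid A \in \fX_\lambda\}$ is a $\C$-basis of the induced module (this is the standard description of $\Ind_{P_\lambda}^{GL_n(q)} \C$ as the right ideal $\overline{P_\lambda} \cdot \C GL_n(q)$, with basis indexed by coset representatives). Hence $\phi$ sends the basis $\fX_\lambda$ bijectively to a basis of $M(\lambda)$, so it is a $\C$-linear isomorphism.

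Third, I would check $GL_n(q)$-equivariance. For $A \in \fX_\lambda$ and $g \in GL_n(q)$, pick $p \in P_\lambda$ with $Ag = p\,(Ag)^\lambda$; then
\[
\phi(A)\, g = \overline{P_\lambda}\, A\, g = \overline{P_\lambda}\, p\, (Ag)^\lambda = \overline{P_\lambda}\, (Ag)^\lambda = \phi\bigl((Ag)^\lambda\bigr) = \phi(A \bullet g),
\]
using $\overline{P_\lambda} p = \overline{P_\lambda}$ for $p \in P_\lambda$. This proves $\phi$ intertwines the $\bullet$-action and right multiplication, and together with the previous steps yields the claimed isomorphism $\C\fX_\lambda \cong M(\lambda)$.

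There is no real obstacle here; the only thing to be a little careful about is distinguishing the abstract action $\bullet$ on $\fX_\lambda$ (which must land back in $\fX_\lambda$ via the normal form) from ordinary matrix multiplication in $GL_n(q)$. The intertwining step above is precisely what reconciles them, because $\overline{P_\lambda}$ absorbs any $P_\lambda$-factor introduced when passing from $Ag$ to its normal form $(Ag)^\lambda$.
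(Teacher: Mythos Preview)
Your proof is correct. The paper does not give a proof of this lemma at all: it is stated with a $\square$ immediately after, i.e.\ treated as an immediate consequence of \ref{3.7} and \ref{3.11}. Your argument is exactly the routine verification one would expect to fill in---checking that $\bullet$ is a genuine action, that $\phi$ matches bases, and that $\overline{P_\lambda}$ absorbs the $P_\lambda$-factor when passing to normal form---so there is nothing to compare beyond noting that you have written out what the authors leave implicit.
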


\section{The $\s$-components of $\Res_{\text{\tiny$U$}}^{\text{\tiny$GL_n(q)$}} M(\lambda)$}\label{sec4}
Let $\lambda\vDash n$ and $M(\lambda)=\Ind_{P_\lambda}^{GL_n(q)} \C\cong \C (\overline { P_\lambda} \fX_\lambda)$ be as defined  in \ref{3.13}. By \ref{3.7}, $\cD_\lambda
$ is a set of $P_\lambda$-$U$ double coset representatives. 
We apply Mackey decomposition to obtain a first decomposition:
\begin{Prop}\label{mackey}
Let $\lambda\vDash n$. Then
\begin{enumerate}
\item [1)]$\Res_{\text{\tiny$U$}}^{\text{\tiny$GL_n(q)$}} M(\lambda)=\bigoplus\nolimits_{d\in \cD_\lambda}\Ind_{\text{\tiny$P_\lambda^d \cap U$}}^{\text{\tiny$U$}} \C \overline{P_\lambda} d.$
\item[2)] Let $d\in \cD_\lambda$ and $\s=\t^\lambda d \in \RStd(\lambda)$. Then the $U$-permutation module $M_\s=\Ind_{\text{\tiny$P_\lambda^d \cap U$}}^{\text{\tiny$U$}} \C \overline{P_\lambda} d$ has a $\C$-basis $\{\overline{P_\lambda} du \,|\, u\in (U_\lambda^-)^d\cap U\}.$ \hfill $\square$
\end{enumerate}
\end{Prop}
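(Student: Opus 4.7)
The proof will be a direct application of the Mackey decomposition theorem to the induced module $M(\lambda)=\Ind_{P_\lambda}^{GL_n(q)}\C$ restricted along the subgroup $U$, using the $P_\lambda$--$U$ double coset information already established in Proposition \ref{3.7}.

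For part 1), I would invoke Mackey's formula in its right-module form: given a decomposition $G=\bigsqcup_{d\in\cD} HdK$ into $H$--$K$ double cosets,
\[
\Res_K^G\Ind_H^G\C \;\cong\; \bigoplus_{d\in\cD} \Ind_{H^d\cap K}^K\C.
\]
Taking $H=P_\lambda$, $K=U$, $G=GL_n(q)$, Proposition \ref{3.7} identifies $\cD_\lambda$ as a complete set of double coset representatives, producing the summands indexed as stated. To locate each summand concretely inside the model $M(\lambda)\cong \C(\overline{P_\lambda}\fX_\lambda)$ of Lemma \ref{3.13}, observe that every $x\in P_\lambda^d\cap U$ is of the form $x=d^{-1}pd$ with $p\in P_\lambda$, so $(\overline{P_\lambda}d)\cdot x=\overline{P_\lambda}pd=\overline{P_\lambda}d$. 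Hence $\C\overline{P_\lambda}d$ is a trivial $(P_\lambda^d\cap U)$-submodule of $M(\lambda)$, and by Mackey it generates under the right $U$-action precisely the induced module $\Ind_{P_\lambda^d\cap U}^U\C\overline{P_\lambda}d$.

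For part 2), a $\C$-basis of the induced module $M_\s=\Ind_{P_\lambda^d\cap U}^U\C\overline{P_\lambda}d$ is obtained by translating $\overline{P_\lambda}d$ by a set of right coset representatives of $P_\lambda^d\cap U$ in $U$. This is exactly what Proposition \ref{3.7} provides: inside the double coset $P_\lambda dU$, the decomposition $P_\lambda dU=\bigcup_{u\in (U_\lambda^-)^d\cap U}P_\lambda du$ into distinct right $P_\lambda$-cosets is equivalent to the statement that the elements of $(U_\lambda^-)^d\cap U$ are pairwise incongruent modulo $d^{-1}P_\lambda d\cap U=P_\lambda^d\cap U$ and exhaust the quotient $U/(P_\lambda^d\cap U)$. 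Therefore $\{\overline{P_\lambda}du\mid u\in (U_\lambda^-)^d\cap U\}$ is a $\C$-basis of $M_\s$.

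The argument is essentially bookkeeping: the only delicate points are the right/left convention in the Mackey formula and the verification that $(U_\lambda^-)^d\cap U$ parametrizes the right coset space $U/(P_\lambda^d\cap U)$, but both are immediate from Proposition \ref{3.7}. I therefore do not anticipate a substantive obstacle beyond carefully citing that result.
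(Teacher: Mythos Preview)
Your proposal is correct and matches the paper's approach exactly: the paper states the proposition with a $\square$ and no proof, prefacing it only with ``We apply Mackey decomposition to obtain a first decomposition,'' so your explicit unpacking of Mackey's formula together with the double coset and coset representative information from Proposition~\ref{3.7} is precisely the intended argument.
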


With $\s\in \RStd(\lambda)$, 
we call $M_\s$ introduced in \ref{mackey} {\bf $\s$-component} of $M(\lambda)$ (in  [\cite{guo}] called $\s$-batch). Under the identification given in \ref{3.13}, its basis is given by  $\fX_\s:=\{du\,|\, u\in (U_\lambda^-)^d\cap U\}$, where $d=\d(\s)\in \cD_\lambda$.  For such an $u$, the matrix $du\in M_n(q)$ is obtained from $u$ by reordering the rows $(r_1,\ldots,r_n)$ of the matrix $u$ to $(r_{1 d},\ldots,r_{n d})$. Note that $1d, \ldots, nd$ are precisely the entries of $\s\in \RStd(\lambda)$ from left to right in the rows of $\s$  and going the rows from top down. In the following we encode $u$ and $du$ in one matrix as follows: We relabel in matrix $du$ the rows top down by  ($1d, \ldots, nd$). With these new row labels the entry at position $(i,j)$ of $du$ coincides  with $u_{ij}\in \F_q$. Moreover note that the rows of $du$ then are automatically divided into $\s$-compartments. So $u$ may be recovered immediately from $du$ by reordering rows of $du$ in its natural order.

\begin{Example}\label{4.2-2}
 Let $\lambda=(2,2,2)\vdash 6$ and $\s=$\begin{tabular}{|c|c|}\hline
1&3\\\hline
2&4\\\hline
5&6\\ \hline
\end{tabular}\,. Then under the new labeling we have
\[
\fX_\s=\left\{\begin{tabular}{c}1\\3\\ 2\\4\\ 5\\6\end{tabular}\left(\begin{tabular}{cccccc}
1&\\
0&0&1&&&\\\hline
$*$&1&\\
$*$&$0$&$*$&1&&\\\hline
$*$&$*$&$*$&$*$&1&\\
$*$&$*$&$*$&$*$&0&$1$
\end{tabular}\right), \quad*\in \F_q\right\}
\]
\end{Example}


Recall that $U=U^-=\langle X_{ij}\,|\, (i,j)\in \Phi^{-} \rangle$. Next we want to describe some pattern subgroups in $U^-$.
Let $\s\in \RStd(\lambda), d=\d(\s)\in \cD_\lambda$.  Recall that  $P_\lambda$ is generated by the torus $T$ and root subgroups $X_{ij}$ with $(i,j)\in J_\lambda$ defined in \ref{3.3}. Thus $X_{ij}\subseteq P_\lambda^d$ if and only if $(id^{-1},jd^{-1})\in J_\lambda$ that is $\row_{\t^\lambda}(id^{-1})\leqslant \row_{\t^\lambda} (jd^{-1})$ or equivalently, if and only if $\row_\s (i)\leqslant \row_\s (j)$. This proves the first three claims in the  the following result observing in addition, that $X_{ij}\subseteq U$ if and only if $j < i$:
%
%

\begin{Lemma}\label{4.3}
Let $\lambda\vDash n, \s\in \RStd(\lambda), d=\d(\s)\in \cD_\lambda$. Then:
\begin{enumerate}
\item [1)]$P_\lambda^d\cap U=\langle X_{ij}\,|\, 1\leqslant j<i\leqslant n, \row_\s(i)\leqslant \row_\s(j)\rangle$

\item [2)]$L_\lambda^d\cap U=\langle X_{ij}\,|\, 1\leqslant j<i\leqslant n, \row_\s(i)=\row_\s(j)\rangle=(L_\lambda\cap U)^d$

\item [3)]$U_\lambda^d\cap U=\langle X_{ij}\,|\, 1\leqslant j<i\leqslant n, \row_\s(i)<\row_\s(j)\rangle$

\item [4)]
$U^d\cap U=\langle X_{ij}\,|\, 1\leqslant j<i\leqslant n, \row_\s(i)\geqslant\row_\s(j)\rangle$
\end{enumerate}
\end{Lemma}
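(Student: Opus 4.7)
The plan is to work entirely at the level of root subgroups, using two observations: first, conjugation by $d \in \fS_n \hookrightarrow GL_n(q)$ sends a root subgroup to a root subgroup via $X_{ij}^{d} = X_{id,\,jd}$ (so $X_{ab} \subseteq H^{d}$ iff $X_{ad^{-1},\,bd^{-1}} \subseteq H$ for $H \in \{P_\lambda,L_\lambda,U_\lambda,U\}$); second, from $\s = \t^\lambda d$ one has the translation formula $\row_\s(i) = \row_{\t^\lambda}(id^{-1})$. Also, $X_{ab} \subseteq U = U^{-}$ is precisely the condition $b < a$.

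For parts (1)--(3) this machinery gives the statements immediately from Definition \ref{3.3}, which characterizes $P_\lambda$, $L_\lambda$, $U_\lambda$ as being generated (modulo the torus, where relevant) by root subgroups $X_{cd}$ with $\row_{\t^\lambda}(c) \leqslant$, $=$, respectively $<$, $\row_{\t^\lambda}(d)$; replacing $(c,d)$ by $(ad^{-1},bd^{-1})$ and applying the translation formula yields the claimed conditions on $\row_\s$. (This is exactly what the paragraph preceding the lemma asserts.)

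The remaining content is part (4), where one must identify the condition ``$b < a$ and $bd^{-1} < ad^{-1}$'' arising from $X_{ab}\subseteq U^d\cap U$ with ``$b < a$ and $\row_\s(a) \geqslant \row_\s(b)$.'' One direction is automatic: since $\t^\lambda$ is the initial tableau, $ad^{-1} > bd^{-1}$ forces $\row_{\t^\lambda}(ad^{-1}) \geqslant \row_{\t^\lambda}(bd^{-1})$, i.e.\ $\row_\s(a) \geqslant \row_\s(b)$. For the converse I would split on cases. If $\row_\s(a) > \row_\s(b)$, then $ad^{-1}$ occupies a strictly lower row of $\t^\lambda$ than $bd^{-1}$, so $ad^{-1} > bd^{-1}$ because row entries of $\t^\lambda$ grow from row to row. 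If $\row_\s(a) = \row_\s(b)$, then $a$ and $b$ sit in the same row of $\s$; since $d \in \cD_\lambda$ so that $\s$ is row standard, the assumption $b < a$ forces $b$ to appear strictly to the left of $a$ in that row; the corresponding entries $bd^{-1}$ and $ad^{-1}$ then occupy the same row of $\t^\lambda$ in the same columns, and row-increasingness of $\t^\lambda$ yields $bd^{-1} < ad^{-1}$.

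The main obstacle is precisely this equal-row subcase: it is the one place where one genuinely uses that $d$ is a \emph{distinguished} coset representative, i.e.\ that $\s$ is row standard, rather than just an arbitrary element of $\fS_n$. Everything else is a direct translation between root-system data for $\t^\lambda$ and for $\s$.
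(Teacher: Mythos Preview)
Your proof is correct and follows essentially the same route as the paper's own argument. The paper likewise dispatches parts 1)--3) by the remark preceding the lemma (the root-subgroup translation $X_{ij}\subseteq P_\lambda^d \Leftrightarrow \row_\s(i)\leqslant\row_\s(j)$), and for part 4) carries out exactly your case split on $\row_\s(a)>\row_\s(b)$ versus $\row_\s(a)=\row_\s(b)$, invoking row-standardness of $\s$ in the latter case just as you do.
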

\begin{proof}

4) Let $X_{ij}\subseteq U^d\cap U, (i,j)\in \Phi$. Since $X_{ij}\subseteq U$, we have $i>j$. Now $X_{ij}\subseteq U^d$ if and only if $X_{ij}^{d^{-1}}=X_{id^{-1}jd^{-1}}\subseteq U$, hence $id^{-1}>jd^{-1}$. So $\row_\s(i)=\row_{\t^\lambda}(id^{-1})\geqslant \row_{\t^\lambda}(jd^{-1})=\row_\s(j)$. Now suppose $1\leqslant j<i\leqslant n$ and $ \row_\s(i)\geqslant\row_\s(j).$ So $X_{ij}\subseteq U$. If $\row_{\t^\lambda}(id^{-1})=\row_\s(i)> \row_\s(j)= \row_{\t^\lambda}(jd^{-1})$, we have $id^{-1}>jd^{-1}$ yielding $X_{ij}^{d^{-1}}=X_{id^{-1}jd^{-1}}\subseteq U$ and hence $X_{ij}\subseteq U^d$, as desired.
So let $\row_{\t^\lambda}(id^{-1})=\row_\s(i)= \row_\s(j)= \row_{\t^\lambda}(jd^{-1})$. Since $j<i$ by assumption and $\s\in \RStd(\lambda)$, $j$ is to the left of $i$ in $\s$ and hence $jd^{-1}$ is to the left of $id^{-1}$ in $\t^\lambda$ implying $jd^{-1}<id^{-1}$. Thus $X_{ij}^{d^{-1}}=X_{id^{-1}jd^{-1}}\subseteq U$ and hence $X_{ij}\subseteq U^d$ in this case as well.\end{proof}


 
 \begin{Defn}\label{4.4}
 Let $\lambda\vDash n, \s\in \RStd{\lambda}, d=\d(\s)$. Define the closed subsets of $\Phi^-$:
 \begin{enumerate}
 \item [1)] $P=P(\s)=\{(i,j)\in \Phi^-\,|\,\row_\s(i)\leqslant \row_\s(j)\}$
 \item [2)] $L=L(\s)=\{(i,j)\in \Phi^-\,|\,\row_\s(i)= \row_\s(j)\}\subseteq P$
 \item [3)] $I=I(\s)=\{(i,j)\in \Phi^-\,|\,\row_\s(i)<\row_\s(j)\}\subseteq P$
 \item [4)] $K=K(\s)=\{(i,j)\in \Phi^-\,|\,\row_\s(i)\geqslant \row_\s(j)\}\supseteq L$
 \item [5)] $J=J(\s)=\{(i,j)\in \Phi^-\,|\,\row_\s(i)>\row_\s(j)\}\subseteq K$
 \end{enumerate}
 \end{Defn}

The following Proüposition follows easily using \ref{3.8}, \ref{4.3} and (\ref{2.3}):

 \begin{Prop}\label{4.5}
 Keeping the notation introduced in \ref{4.3} and \ref{4.4} we have:
 \begin{enumerate}
 \item [1)] $P_\lambda^d\cap U=U_P, L_\lambda^d\cap U=U_L, U_\lambda^d\cap U=U_I, U^d\cap U=U_K, (U_\lambda^-)^d\cap U=U_J$.
 \item [2)] $U_I, U_L\leqslant U_P, U_I\trianglelefteq U_P, U_I\cap U_L=(1)$ and $U_P=U_IU_L$ (semidirect product).
 \item[3)] $U_J, U_L\leqslant U_K, U_J\trianglelefteq U_K, U_J\cap U_L=(1)$ and $U_K=U_JU_L$ (semidirect product).
 \item[4)] $U_P\cap U_K=U_L$.
 \item [5)] $U=U_PU_J, U_P\cap U_J=(1).$
 \item [6)] $U_L$ is conjugate by $d^{-1}$ to $L_\lambda\cap U$. In particular, $U_L$ is isomorphic to the direct product of the full unitriangular groups  $U_{\lambda_i}(q)$, $1\leqslant i\leqslant k,\lambda=(\lambda_1,\ldots, \lambda_k)\vDash n.$
 \end{enumerate}
  \end{Prop}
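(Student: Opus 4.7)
The plan is to reduce each of the six claims to a combinatorial statement about the subsets $P, L, I, K, J$ of $\Phi^-$ from \ref{4.4} and then invoke the two pieces of pattern-subgroup machinery already available: the unique-product expansion (\ref{2.3}) and the normality criterion \ref{2.4}. All five sets are closed in $\Phi^-$, which is immediate from the transitivity of the relations $\leqslant,=,\geqslant$ on the rows of $\s$.

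For part 1), I would read off from \ref{4.3} that each of $P_\lambda^d\cap U$, $L_\lambda^d\cap U$, $U_\lambda^d\cap U$, $U^d\cap U$ is generated by precisely those root subgroups $X_{ij}$ with $(i,j)\in\Phi^-$ matching the defining condition of $P$, $L$, $I$, $K$ in \ref{4.4}; invoking (\ref{2.3}) then identifies each generated subgroup with the corresponding pattern subgroup. The remaining identity $(U_\lambda^-)^d\cap U=U_J$ is exactly \ref{3.8}. For parts 2) and 3), normality $U_I\trianglelefteq U_P$ and $U_J\trianglelefteq U_K$ follows from \ref{2.4}: if $(i,j),(j,k)\in P$ and one of them lies in $I$, then the chain $\row_\s(i)\leqslant\row_\s(j)\leqslant\row_\s(k)$ has a strict step, forcing $(i,k)\in I$, and the argument for $K,J$ is symmetric. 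The intersections $U_I\cap U_L$ and $U_J\cap U_L$ are trivial because $I\cap L=\emptyset=J\cap L$ and supports are preserved by (\ref{2.3}); order counting $|U_P|=q^{|P|}=q^{|I|}q^{|L|}=|U_I||U_L|$ then forces $U_P=U_IU_L=U_I\rtimes U_L$, and likewise $U_K=U_J\rtimes U_L$.

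Parts 4) and 5) reduce to the set-theoretic identities $P\cap K=L$ and $\Phi^-=P\sqcup J$, which are visible from the trichotomy of $\row_\s(i)$ versus $\row_\s(j)$. The first yields $U_P\cap U_K=U_L$ by comparing supports; for the second, fixing a linear ordering on $\Phi^-$ that places all of $P$ before all of $J$ and applying (\ref{2.3}) gives a unique factorization of every $u\in U$ as an element of $U_P$ times an element of $U_J$, hence simultaneously $U=U_PU_J$ and $U_P\cap U_J=(1)$.

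For part 6), I would use the conjugation formula $X_{ij}^{d^{-1}}=X_{id^{-1},\,jd^{-1}}$ recorded just before \ref{4.3}, together with $\row_\s(i)=\row_{\t^\lambda}(id^{-1})$. The core point is that $(i,j)\mapsto(id^{-1},jd^{-1})$ maps $L(\s)$ bijectively onto $\{(a,b)\in\Phi^-\mid \row_{\t^\lambda}(a)=\row_{\t^\lambda}(b)\}$, which by \ref{4.3}(2) is exactly the index set of $L_\lambda\cap U$; thus $(U_L)^{d^{-1}}=L_\lambda\cap U$, and the block-diagonal shape of $L_\lambda$ gives the isomorphism with $U_{\lambda_1}(q)\times\cdots\times U_{\lambda_k}(q)$. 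The only step needing a line of care — and so the main (mild) obstacle — is checking that $id^{-1}>jd^{-1}$ whenever $(i,j)\in L(\s)$: since $j<i$ share a row of the row-standard tableau $\s$, $j$ is to the left of $i$ in that row, so $jd^{-1}$ is to the left of $id^{-1}$ in the same row of $\t^\lambda$, and $\t^\lambda$ is strictly increasing along rows, forcing the inequality.
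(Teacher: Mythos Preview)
Your proposal is correct and follows the same line as the paper, which simply records that the proposition ``follows easily using \ref{3.8}, \ref{4.3} and (\ref{2.3})'' without spelling out details. Your write-up is a faithful unpacking of exactly those three ingredients: \ref{4.3} and \ref{3.8} identify the five intersections with pattern subgroups, (\ref{2.3}) handles the product decompositions and trivial intersections, and \ref{2.4} (which you correctly invoke, though the paper does not cite it explicitly here) disposes of the normality claims; for part 6) the equality $(U_L)^{d^{-1}}=L_\lambda\cap U$ is in fact already contained in \ref{4.3}(2), so your row-standard argument, while correct, is slightly more than is needed.
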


 Next we investigate the action of $U^d\cap U=U_K\leqslant U$ on the $\s$-component $M_\s$ of $\Res_{\text{\tiny$U$}}^{\text{\tiny$GL_n(q)$}} M(\lambda)$, $\lambda\vDash n, \s\in \RStd(\lambda), d=\d(\s)$.
\begin{Prop}\label{4.6}
Let $\lambda\vDash n, \s\in \RStd(\lambda), d=\d(\s)$. With the notation of \ref{4.5} the following holds:
$$
\Res_{U_K}^U M_\s \cong \Ind^{U_K}_{U_L} \C.
$$
\end{Prop}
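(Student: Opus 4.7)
The plan is to combine two facts: that $M_\s$ is itself an induced module from $U_P$, and that $U_K$ meets $U_P$ in exactly $U_L$ with only one double coset.

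First I would rewrite $M_\s$ cleanly. Since $P_\lambda^d \cap U = U_P$ by Proposition 4.5 (1), and since $u \in U$ fixes the vector $\overline{P_\lambda}d$ (under right multiplication) precisely when $du \in P_\lambda d$, i.e.\ when $u \in P_\lambda^d \cap U = U_P$, the stabiliser of $\overline{P_\lambda}d$ in $U$ is $U_P$. Hence
\[
M_\s = \Ind_{U_P}^U \C \overline{P_\lambda}d \cong \Ind_{U_P}^U \C
\]
as $\C U$-modules, where on the right the trivial $\C U_P$-module $\C$ is induced up.

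Next I would apply Mackey's formula to $\Res_{U_K}^U \Ind_{U_P}^U \C$. The decomposition is indexed by $(U_P, U_K)$-double cosets in $U$. By Proposition 4.5 (5) we have $U = U_P U_J$, and since $U_J \leqslant U_K$ (by 4.5 (3)), it follows that $U = U_P U_J \subseteq U_P U_K$, so there is a single double coset $U_P \cdot 1 \cdot U_K = U$. Thus Mackey collapses to
\[
\Res_{U_K}^U \Ind_{U_P}^U \C \;\cong\; \Ind_{U_P \cap U_K}^{U_K} \C.
\]
Finally, Proposition 4.5 (4) says $U_P \cap U_K = U_L$, giving the required isomorphism $\Res_{U_K}^U M_\s \cong \Ind_{U_L}^{U_K} \C$.

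No step here should be an obstacle; the content of the lemma is really already packaged into Proposition 4.5, so the proof is essentially a two‑line appeal to Mackey. If one preferred to avoid invoking Mackey, an equally short alternative is to use Proposition 4.2 (2): the basis $\{\overline{P_\lambda}du \mid u \in U_J\}$ of $M_\s$ is permuted by $U_J$ (acting by right multiplication, since $U_J$ is a subgroup) and by $U_L$ via conjugation on the $U_J$-component (using $l \in U_L \subseteq U_P$ to see $\overline{P_\lambda}dl = \overline{P_\lambda}d$ and $U_J \trianglelefteq U_K$ to rewrite $ul = l\,u^l$), which matches exactly the description of $\Ind_{U_L}^{U_K}\C$ given in Lemma 2.5.
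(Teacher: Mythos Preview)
Your proof is correct and follows essentially the same approach as the paper: both argue that $M_\s \cong \Ind_{U_P}^U \C$, observe that $U = U_P U_J \subseteq U_P U_K$ forces a single $(U_P,U_K)$-double coset, and then apply Mackey together with $U_P \cap U_K = U_L$ from Proposition~4.5(4). Your additional remark offering a direct basis argument via Lemma~2.5 is a nice alternative but not needed here.
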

\begin{proof}
This is Mackey decomposition again: First note that $U=U_PU_J\subseteq U_PU_K\subseteq U$ implies $U_PU_K=U$, i.e. there is only one $U_P U_K$ double coset in $U$ (with representative 1), hence 
\begin{eqnarray*}
\Res^U_{U_K} M_\s&=&\Res^U_{U_K} \Ind^ U_{U_P} \C\\
&\cong& \Ind ^{U_K}_{U_K\cap U_P} \Res^{U_P}_{U_K\cap U_P}\C\\
&=&\Ind ^{U_K}_{U_L} \C,
\end{eqnarray*}
by Mackey decomposition and \ref{4.5} part 4).
\end{proof}

Observe that  \ref{4.5} part 3) implies that $J, L\subseteq K$ satisfy the hypothesis \ref{2.8}. Thus we may apply the construction  of section \ref{sec2} to $U_K=U_J\rtimes U_L$. In particular, $\Ind_{U_L}^{U_K} \C$ is isomorphic to $\C \cE_J$, where $\cE_J$ is the lidempotent basis of the group algebra $\C V_J$ of the additive group $V_J=\{u-E\,|\,u\in U_J\}$. The monomial action of $U_K$ on $\cE_J$ is described in \ref{2.17}.


The set $\fX_\s^0:=\{du-d\,|\,u\in U_J\}\cong V_J$ as $\F_q$-vector space, since $d(u-E)=du-d$. For $A\in V_J$, $dA$ is again obtained from $A$ by reordering the rows. Since $M_\s$ has basis $\fX_\s$, the $\C$-basis $\cE_J$ of $\Res^U_{U_K} M_\s\cong \Ind^{U_K}_{U_L}\C$ may be expanded by linear combination of $\fX_\s$.


Applying left multiplication by $d$ to matrices in $V_J$, where rows of $dA, A\in V_J$ are relabeled  as in \ref{4.2-2} and extending this action by linearity to $\C (V_J,+)$, we turn the lidempotents $e_{_A}\in \cE_J, A\in V_J$ into lidempotents $e_{_{dA}}\in \C (\fX_\s^0,+)$, such that the $\C$-span of $\{e_{_{dA}}\,|\,A\in V_J\}$ is an $U_K$-module isomorphic to $\C \fX_\s$.

\begin{Theorem}\label{4.9}
Let $(i,j)\in K, \alpha\in \F_q, A\in V_J$. Then 
\[
e_{_{dA}}x_{ij}(\alpha)=\begin{cases}\theta(A_{ij}\alpha)e_{_{dB}}, & \text{ for } (i,j)\in J \\ e_{_{dB}}, & \text{ for } (i,j)\in L\end{cases}
\]
where $dB$ is obtained from $dA$ by adding $-\alpha$ times column $j$ to column $i$ of $dA$ and setting all entries of the resulting matrix to zero, which do not belong to $J$, if $(i,j)\in J$. If $(i,j)\in L$, we obtain $dB$ similarly by the combined truncated row and column operation. Moreover if $(i,j)\in L$ and belongs to highest (lowest) compartment in matrices in $\fX_\s$, then $x_{ij}(\alpha)$ acts by truncated column (row) operation alone. 
\end{Theorem}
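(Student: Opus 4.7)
The plan is to reduce the statement to a direct application of Theorem \ref{2.17} via two successive identifications. First, Proposition \ref{4.6} gives $\Res^U_{U_K} M_\s \cong \Ind^{U_K}_{U_L}\C$, and Proposition \ref{4.5} part 3) shows that $J,L\subseteq K$ fulfil Hypothesis \ref{2.8}. Theorem \ref{2.11} therefore identifies $\Ind^{U_K}_{U_L}\C$ with $\C(V_J,+)\cong \C\cE_J$ as $U_K$-modules, and Theorem \ref{2.17} expresses the resulting monomial action of $U_K$ on the lidempotent basis $\cE_J=\{e_{_A}\,|\,A\in V_J\}$ by truncated column operations along $V_J$ when $(i,j)\in J$ and by combined truncated row/column operations along $V_J$ when $(i,j)\in L$, with the scalar factor $\theta(\alpha A_{ij})$ appearing precisely in the first case.

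Second, I transfer this action to the basis $\{e_{_{dA}}\,|\,A\in V_J\}$ of $\C\fX_\s^0$ via the $\C$-linear bijection $e_{_A}\mapsto e_{_{dA}}$ induced by left multiplication by $d$. Since $d$ merely permutes rows, this is an isomorphism of $U_K$-modules once the relabeling convention of Example \ref{4.2-2} is adopted: row $k$ of $dA$ is relabeled by $kd$, and under this convention the entry at labeled position $(i,j)$ of $dA$ coincides with $A_{ij}$. Consequently, a truncated column operation on $A$ corresponds verbatim to the same truncated column operation along $V_J$ on the labeled matrix $dA$, while a truncated row operation from row $i$ to row $j$ in $A$ becomes the truncated row operation from the row labeled $i$ to the row labeled $j$ in $dA$. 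Plugging these translations into the formulas of Theorem \ref{2.17} yields the two displayed cases of the theorem.

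For the final assertion, note that $(i,j)\in L$ forces $\row_\s(i)=\row_\s(j)$, so the labeled rows $i$ and $j$ of $dA$ lie in a common compartment of $\s$. If that compartment is the highest, then $\row_\s(i)=1$ and no $k$ can satisfy $(i,k)\in J$ (which would require $\row_\s(k)<1$); hence row $i$ of $A$ vanishes identically and the row operation contributes nothing, leaving only the truncated column operation. Dually, if the common compartment is the lowest, then $\row_\s(j)$ is maximal and no $k$ satisfies $(k,j)\in J$, so column $j$ of $A$ vanishes and only the truncated row operation survives.

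The main obstacle is not computational but notational: one must keep straight the three coordinate systems in play (positions in $V_J\subseteq M_n(q)$, raw positions in $dA$, and labeled positions in $dA$) and verify that the identification $e_{_A}\leftrightarrow e_{_{dA}}$ really intertwines the $U_K$-action of Theorem \ref{2.17} with the $U_K$-action inherited from $M_\s$. Once this compatibility is secured via Proposition \ref{4.6} and Theorem \ref{2.11}, the theorem reduces to a direct translation of (\ref{2.18}) and (\ref{2.199}).
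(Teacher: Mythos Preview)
Your proposal is correct and follows essentially the same approach as the paper: both invoke Theorem~\ref{2.17} (via the identifications of Propositions~\ref{4.5} and~\ref{4.6} and Theorem~\ref{2.11}) for the main formula, and then argue separately that in the highest (respectively lowest) compartment the relevant row (respectively column) of $A$ lies outside $J$ and hence vanishes, killing one half of the combined operation. The only cosmetic difference is that for the lowest-compartment case the paper appeals to Theorem~\ref{xlambda} to see that columns $i$ and $j$ are zero, whereas you argue this directly from the definition of $J$; both routes are valid.
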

\begin{proof}
Everything follows directly from  \ref{2.17} besides the last claim.
Let $(i,j)\in L$  such that  $(i,j)$ is a position in the highest $\s$-compartment of $dA$, that is, $\row_\s(i)=\row_\s(j)=1$. By construction for any position $(k,l)\in \Phi^-$ satisfying $\row_\s(k)=\row_\s(l)=1$ we have $(k,l)\notin J$ by Definition \ref{4.4}. In particular all entries in $dA$  in the highest compartment are zero. Thus only the truncated column operation adding $-\alpha$ times column $j$ to column $i$ in $dA$ and truncating can change $dA$ for the calculation of $e_{_{dA}} x_{ij}(\alpha)$. Now assume $\lambda=(\lambda_1, \ldots, \lambda_k)\vDash n$ and $(i,j)\in L$ with $\row_{\s}(i)=\row_\s (j)=k$. Then $(i,j)$ is a position in the lowest $\s$-compartment of $dA$. By \ref{xlambda} all entries in columns $i$ and $j$ in $dA$ are zero and hence truncated column operation adding  $-\alpha$ times column $j$ to column $i$ will not change $dA$. Thus the claim follows.\end{proof}

\begin{Prop}\label{4.10}
Let $\lambda=(\lambda_1, \dots, \lambda_k)\vDash n$ with $\lambda_k\neq 0$. Let $\s \in \RStd(\lambda)$ and set $J_k=\{(i,j)\in J\,|\,\row_\s(i)=k\}$. Then $J_k$ is closed in $\Phi^-$, and $U_{J_k}$ is an abelian normal subgroup of $U_J$ which acts on $e_{_{dA}}\in \C (\fX_\s^0,+)$ by a linear character, $d=\d(\s)\in \cD_\lambda$.
\end{Prop}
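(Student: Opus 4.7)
The plan is to verify the four claims (closedness, abelian, normality, character action) in order, all of which rely on the crucial combinatorial observation that since $\s$ has exactly $k$ rows, $\row_\s$ takes values in $\{1,\ldots,k\}$, so in particular $\row_\s(r)>k$ is impossible.

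For closedness of $J_k\subseteq\Phi^-$, suppose $(i,j),(j,l)\in J_k$. Then the second forces $\row_\s(j)=k$, but the first combined with $(i,j)\in J$ gives $\row_\s(j)<\row_\s(i)=k$, a contradiction. So the closure condition is vacuous. For $U_{J_k}$ abelian, by the unique factorization (\ref{2.3}) it suffices to check pairwise commutation of the generators $X_{ij}$ with $(i,j)\in J_k$. Chevalley's commutator formula makes $[X_{ij},X_{i'j'}]$ nontrivial only when $j=i'$ or $j'=i$; in either case, combining $(i,j),(i',j')\in J_k$ yields an index whose $\row_\s$-value is simultaneously equal to $k$ and strictly less than $k$, which is absurd. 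Hence $U_{J_k}$ is abelian.

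For the normality $U_{J_k}\trianglelefteq U_J$, I apply Lemma \ref{2.4} to $J_k\subseteq J$. The condition to check is: whenever $(a,b),(b,c)\in J$ and one of them lies in $J_k$, then $(a,c)\in J_k$. If $(a,b)\in J_k$, then $\row_\s(a)=k$ and $\row_\s(c)<\row_\s(b)<k$, while $a>b>c$ forces $(a,c)\in\Phi^-$, so $(a,c)\in J$ with $\row_\s(a)=k$, i.e.\ $(a,c)\in J_k$. If $(b,c)\in J_k$, then $\row_\s(b)=k$, but $(a,b)\in J$ requires $\row_\s(a)>k$, which is impossible; the case is vacuous. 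Lemma \ref{2.4} now gives $U_{J_k}\trianglelefteq U_J$.

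For the character action, fix $A\in V_J$ and $(i,j)\in J_k$. By Theorem \ref{4.9}, $e_{_{dA}}\,x_{ij}(\alpha)=\theta(A_{ij}\alpha)\,e_{_{dB}}$, where $dB$ is obtained from $dA$ by the truncated column operation adding $-\alpha$ times column $j$ to column $i$ and then zeroing out all entries not in $J$. The key observation is that column $i$ contains \emph{no} position of $J$ at all: any $(r,i)\in J$ would need $r>i$ and $\row_\s(r)>\row_\s(i)=k$, which is impossible. Thus column $i$ of $dA$ is already zero, and the truncation re-zeroes the modified column $i$, giving $dB=dA$. Therefore each generator $x_{ij}(\alpha)$ of $U_{J_k}$ scales $e_{_{dA}}$ by $\theta(A_{ij}\alpha)$. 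Since $U_{J_k}$ is abelian and is (as an abelian group) the direct sum of its root subgroups, the map
\[
\chi_{_A}\!:\;\prod_{(i,j)\in J_k}x_{ij}(\alpha_{ij})\longmapsto\prod_{(i,j)\in J_k}\theta(A_{ij}\alpha_{ij})
\]
is a well-defined linear character of $U_{J_k}$, and $e_{_{dA}}$ is a common eigenvector with this eigen-character.

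The main obstacle is spotting the row-bound $\row_\s\le k$, which simultaneously kills the non-vacuous case of normality and forces the truncated column operation to act trivially; once that is identified, every other step is a routine check against Lemma \ref{2.4} and Theorem \ref{4.9}.
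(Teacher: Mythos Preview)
Your proof is correct and follows essentially the same approach as the paper. The paper compresses your closedness, abelian, and normality checks into the phrase ``by direct calculation,'' and for the character action it justifies the vanishing of column $i$ by citing Theorem~\ref{xlambda} (row $i$ lies in the lowest compartment), whereas you derive it directly from $\supp(A)\subseteq J$ and the impossibility of $\row_\s(r)>k$; both arguments hinge on the same row-bound observation you highlight.
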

\begin{proof}
Since $J=\{(i,j)\in \Phi^-\,|\,\row_\s(i)>\row_\s(j)\}$  by definition, we obtain by direct calculation that $J_k$ is closed in $\Phi^-$ and $U_{J_k}$ is an abelian normal subgroup of $U_J$. Let $(i,j)\in J_k, \alpha\in \F_q$  and $A\in V_J$. Since row $i$ lies in the $k$-th compartment of $dA$, which is a lowest compartment, by \ref{xlambda} the $i$-th column of $dA$ is a zero column. Hence
 by \ref{4.9}, $e_{_{dA}}x_{ij}(\alpha)=\theta(A_{ij}\alpha)e_{_{dA}}$, which is a scalar action as desired. \end{proof}

\begin{Remark}\label{4.11}
Henceforth we identify  for fixed $d\in \cD_\lambda$ the spaces $V_J$ and $\fX_\s^0$ and think of matrices $dA\in \fX_s^0$ as elements of $(U_\lambda^-)^d\cap U$ with reordered rows keeping the original labeling of those as in \ref{4.2-2}. Thus for lidempotents $e_{_A}\in \cE_J$ we think as well $A$ to be a matrix in $V_J$ with reordered rows, sorting the rows of $A$ into consecutive compartments, each of those corresponding to a row in the tableau $\s=\t^\lambda d$.
\end{Remark}

\section{Supercharacters of $U$}\label{sec5}
The supercharacter theory of Andr\'{e} [\cite{andre1}] and Yan [\cite{yan}] is the special case of the construction in the previous section taking $\lambda=(1^n)\vdash n$ and $w=1\in \cD_\lambda$. Then $K=J=\Phi^-, \,L=\emptyset$ and the 1-cocycle $f: U \rightarrow V=V_{\Phi^-}=\Lie(U)$ is given by $f(u)=u-E\in V$. Note that this is a left 1-cocycle as well yielding a left action of $U$ on  $\C (V,+)$. Indeed  
$\C (V,+)$ is then a $\C U$-bimodule  isomorphic to the regular $\C U$-bimodule $_{\C U}\C U_{\C U}$.


We write $\cE=\cE_{\Phi^-}$, the set of lidempotents arising from $\C (V,+)$. To distinguish this special case notationally from other cases we denote now the lidempotent affording $\chi_{_{-A}}, A\in V$, by $[A]$ instead of $e_{_A}$.
For the convenience of the reader, we collect some well-known facts on the monomial $\C U$-bimodule $\C \cE$, where $\cE=\{[A]\,|\, A\in V\}$ is the $\C$-basis of $\C (V,+)$ consisting of lidempotents. For  details and proof we refer to [\cite{super,yan}].

\begin{Lemma}\label{truncated}
 Let $A\in V$,
  $1\leqslant j < i \leqslant n$ and  $\alpha\in \F_q$, then $$[A]x_{ij}(\alpha)=\theta(\alpha A_{ij})[A\ldotp x_{ij}(\alpha)],$$ where $A\ldotp x_{ij}(\alpha)$ is obtained from $A$ by adding $-\alpha$ times column $j$ to column $i$ (from left to right) and setting nonzero entries in the resulting matrix at position on or to the right of the diagonal to zero. This is called {\bf   truncated column operation}, (comp. \ref{2.199}). Similarly the left operation of $x_{ij}(\alpha)$ on the lidempotent basis $\{[A]\,|\,A\in V\}$ of $\C V$ can be described by a truncated row operation from down up, the coefficient in $\C$ being again $\theta(\alpha A_{ij}) $. \hfill $\square$
\end{Lemma}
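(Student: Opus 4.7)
The plan is to derive the lemma as the direct specialization of Theorem \ref{2.17}---equivalently, of formula (\ref{2.199})---to the case $\lambda=(1^n)$, $d=1$, for which the setup was already laid out at the start of this section.

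Concretely, with $\lambda=(1^n)$ and $\s=\t^\lambda$ we have $\row_\s(i)=i$, so Definition \ref{4.4} gives $K=J=\Phi^-$ and $L=\emptyset$; Hypothesis \ref{2.8} holds trivially; $V=V_J=\Lie(U)$ carries the lidempotent basis $\{[A]=e_{_A}\,|\,A\in V\}$ in the notation of Definition \ref{2.177}; and the 1-cocycle $f:U_K\to V_J$ of Lemma \ref{2.10} collapses to $f(u)=u-E$, since the projection $\rho$ onto $U_L=\{E\}$ is trivial. As $L=\emptyset$, only the first clause of (\ref{2.199}) can ever apply, and it yields, for each $(i,j)\in\Phi^-$ and $\alpha\in\F_q$,
$$[A]\,x_{ij}(\alpha)=\theta(\alpha A_{ij})\,[B],$$
where, by Case 2 of the proof of \ref{2.15}, $B$ is obtained from $A$ by adding $-\alpha$ times column $j$ to column $i$ and then applying $\pi_{\!_J}$. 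Since $V_J$ is now the strictly lower triangular subspace, $\pi_{\!_J}$ is precisely the zeroing of entries on or above the diagonal, so $B=A\ldotp x_{ij}(\alpha)$ in the sense of the lemma. This proves the right-action formula verbatim.

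For the left-action statement, I would observe that the same map $f(u)=u-E$ is simultaneously a left 1-cocycle: the one-line check $f(xg)=xg-E=x(g-E)+(x-E)=x\cdot f(g)+f(x)$ for $x,g\in U$ shows this, where $U$ acts on $V=\Lie(U)$ by ordinary left matrix multiplication. The left-module analogues of Theorem \ref{markus} and Theorem \ref{2.17} then go through verbatim, and Case 2 of \ref{2.15} transposes under swapping left/right and rows/columns to give the scalar $\theta(\alpha A_{ij})$ together with a truncated row operation performed from the bottom up (adding $-\alpha$ times row $i$ to row $j$ and truncating back into $V_{\Phi^-}$). Because the entire argument is symmetric under transposition, no genuine obstacle arises beyond careful bookkeeping of the signs and of the row/column duality; that bookkeeping is the only point I expect to require any care at all.
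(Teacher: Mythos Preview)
Your proposal is correct and matches the paper's intent precisely. The paper does not give a separate proof of this lemma: it is stated with a terminal $\square$ and the statement itself points back to (\ref{2.199}), the surrounding text having already declared that the supercharacter setup is the special case $\lambda=(1^n)$, $K=J=\Phi^-$, $L=\emptyset$, $f(u)=u-E$. Your derivation---specializing Theorem \ref{2.17} via Definition \ref{4.4} and reading off Case~2 of Lemma \ref{2.15}---is exactly this specialization made explicit, and your observation that $f(u)=u-E$ is simultaneously a left 1-cocycle is the right justification for the left-action half.
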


\begin{Defn}\label{def of verge}
A subset $\p=\{(i_1, j_1), \ldots, (i_k, j_k)\}\subseteq \Phi^-$ is called a {\bf main condition set} if 
$\ppi=\{i_1, \ldots, i_k\}$ and $\pj=\{j_1, \ldots, j_k\}$ are sets of $k$ many pairwise different indices in $\{1, \ldots, n\}$. So $\p$ picks from each row and each column of $n\times n$-matrices at most one position. We call $[A]\in \cE$ {\bf verge} if $\supp(A)=\p\subseteq \Phi^-$ is a main condition set and call then the elements of $\p$ main conditions.
\end{Defn}

It is easy to see that verge lidempotents correspond to the ``basic characters'' defined by Andr\'{e}. Indeed it can be shown that each $U$-$U$-biorbit of $\cE$ contains exactly one verge $[A]\in \cE$ and  all right orbits contained in the biorbit generated by the verge $[A]\in \cE$ afford identical characters of $U$. Those are precisely the supercharacters. Distinct biorbits afford orthogonal characters and hence each irreducible character of $U$ is irreducible constituent of precisely one supercharacter, (comp. [\cite{mindim}, 2.13]).


We denote for $[A]\in \cE, A\in V$, the right orbit containing $[A]$ by $\cO_A$.

\begin{Defn}\label{5.2}
 Let $1\leqslant j < i \leqslant n$. The {\bf hook arm} $h_{ij}^a$ centred at $(i,j)$ consists of all positions $(i,k)\in \Phi^-$ strictly to the right of $(i,j)$, thus $h_{ij}^a=\{(i,k)\,|\,j<k<i\}$, and the {\bf hook leg} $h_{ij}^l$ is the set of positions $(l,j)\in \Phi^-$ strictly above $(i,j)$, thus $h_{ij}^l=\{(l,j)\,|\,j<l<i\}$. Finally the {\bf hook} $h_{ij}$ centred at $(i,j)$ is defined to be $h_{ij}=h_{ij}^a\cup h_{ij}^l\cup \{(i,j)\}$. Let $\p\subseteq \Phi^-$ be a main condition set. The hooks centred at positions in $\p$ are called {\bf main hooks.}
\end{Defn}

\begin{Theorem}[][\cite{mindim}]\label{pstab}
 Let $[A]\in \cE$ be a verge lidempotent with $ \supp(A)=\p_{_A}=\{(i_1,j_1),\ldots,(i_k,j_k)\}$ $\subseteq \Phi^-$. Then the right projective stabilizer of $[A]$ in $U$, that is the set
$\{u\in U\,|\, [A]u=\lambda [A], \text{ for some } \lambda \in \C^*\}$,
denoted by $\Pstab_{\mbox{\tiny $U$}}[A]$, is a pattern subgroup $U_{\cal R}$ with 
\[
{\cal R}=\{(r,s)\in \Phi^-\,|\,s\notin \{j_1,\ldots,j_k\}\}\cup\{(r,j_\nu)\,|\,\nu=1,\ldots,k, i_\nu\leqslant r \leqslant n\}.
\] 
Thus $\p_{_A}\subseteq {\cal R}$ and ${\cal R}^\circ ={\cal R}\setminus \p_{_A}$ is closed. Moreover $U_{{\cal R}^\circ }$ acts trivially on $[A]$, $U_{{\cal R}^\circ }\trianglelefteq U_R$ and $U_{\cal R}/U_{{\cal R}^\circ }\cong X_{i_1 j_1}\times \cdots\times X_{i_k j_k}$
acting on $[A]$ by the linear character $\theta_A=\theta_1\times \theta_2\times \cdots \theta_k$, where
$\theta_{\nu}: X_{i_\nu j_\nu}\rightarrow \C^*$ sends $x_{i_\nu j_\nu}(\alpha)$ to $ \theta( \alpha A_{i_\nu j_\nu})\in \C^*$ for $\alpha\in \F_q,\, \nu=1,\ldots,k$. \hfill $\square$
\end{Theorem}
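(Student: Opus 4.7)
The strategy is to exploit Lemma \ref{truncated}: for $(i,j)\in\Phi^-$ and $\alpha\in\F_q$ we have $[A]x_{ij}(\alpha)=\theta(\alpha A_{ij})[A\ldotp x_{ij}(\alpha)]$, so $x_{ij}(\alpha)\in\Pstab_U[A]$ iff the truncated column operation $A\mapsto A\ldotp x_{ij}(\alpha)$ fixes $A$ on the nose. Since $A$ is supported on the main condition set $\p_A=\{(i_1,j_1),\ldots,(i_k,j_k)\}$, column $j$ of $A$ is zero unless $j=j_\nu$ for some $\nu$. I would analyse in turn the two cases encoded by $\mathcal R$: when $j\notin\{j_1,\ldots,j_k\}$ the column we subtract is zero so nothing is altered and $x_{ij}(\alpha)$ stabilises $[A]$ for every $\alpha$; when $j=j_\nu$ the only nonzero entry produced by the column operation is $-\alpha A_{i_\nu j_\nu}$ in position $(i_\nu,i)$, which is killed by truncation precisely when $i\geq i_\nu$ (so the position is on or above the diagonal) and in the remaining case $i<i_\nu$ forces $\alpha=0$, since no entry of $A$ in row $i_\nu$ outside $j_\nu$ is available to absorb it (here the distinctness of the $i_\nu$'s is essential). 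Putting these together gives exactly $\mathcal R$ as the set of root positions whose one‑parameter subgroups projectively stabilise $[A]$.

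Next I verify that $\mathcal R$ is closed and $\mathcal R^{\circ}=\mathcal R\setminus\p_A$ is closed, so that $U_{\mathcal R}$ and $U_{\mathcal R^{\circ}}$ are genuine pattern subgroups. For closedness of $\mathcal R$, given $(i,j),(j,k)\in\mathcal R$ in $\Phi^-$, I split on whether $k\in\{j_\mu\}$; the only delicate subcase is $k=j_\mu$ with $(j,k)$ falling under the second clause, which gives $j\geq i_\mu$, and combined with $i>j$ yields $i\geq i_\mu$. For $\mathcal R^{\circ}$ one must further exclude $(i,k)\in\p_A$; a putative equality $(i,k)=(i_\nu,j_\nu)$ leads, via the membership condition on $(j,k)=(j,j_\nu)\in\mathcal R$, to $j\geq i_\nu=i$, contradicting $j<i$. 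The same kind of case‑check, via Lemma \ref{2.4}, shows $U_{\mathcal R^{\circ}}\trianglelefteq U_{\mathcal R}$: if $(i,j)$ or $(j,k)$ lies in $\mathcal R^{\circ}$ and $(i,k)\in\p_A$, one derives the same contradiction $j\geq i$.

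The triviality of the $U_{\mathcal R^{\circ}}$‑action is immediate once the generators are handled: for $(i,j)\in\mathcal R^{\circ}$ we have $A_{ij}=0$ because $(i,j)\notin\p_A$, and we have already shown $A\ldotp x_{ij}(\alpha)=A$, so $[A]x_{ij}(\alpha)=[A]$; since the trivial action on generators extends to the group, $U_{\mathcal R^{\circ}}$ fixes $[A]$ pointwise. Finally, for the structure of $U_{\mathcal R}/U_{\mathcal R^{\circ}}$, I use the unique factorisation (\ref{2.3}) to see that a transversal is given by products $\prod_{\nu=1}^{k} x_{i_\nu j_\nu}(\alpha_{\nu})$, and I invoke Chevalley's commutator formula to verify that $[x_{i_\nu j_\nu}(\alpha),x_{i_\mu j_\mu}(\beta)]\in U_{\mathcal R^{\circ}}$: the only potential nontrivial commutator is at position $(i_\nu,j_\mu)$ when $j_\nu=i_\mu$, and that position lies in $\mathcal R^{\circ}$ because $i_\nu>i_\mu$ (so the membership condition holds) while $(i_\nu,j_\mu)\notin\p_A$ by distinctness of the $j_\mu$'s. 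Hence the quotient is the direct product $X_{i_1 j_1}\times\cdots\times X_{i_k j_k}$, and the character by which it acts is read off from Lemma \ref{truncated}: $x_{i_\nu j_\nu}(\alpha)$ contributes the factor $\theta(\alpha A_{i_\nu j_\nu})=\theta_\nu(x_{i_\nu j_\nu}(\alpha))$.

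I expect the main obstacle to be the bookkeeping for closedness of $\mathcal R$ and $\mathcal R^\circ$ and for $U_{\mathcal R^\circ}\trianglelefteq U_{\mathcal R}$; the argument is not deep but requires using both the "main condition" property (distinctness of rows and of columns of $\p_A$) and the ordering relations in $\Phi^-$ in tandem. Once closedness and normality are in hand, the character‑theoretic statements follow from Lemma \ref{truncated} and Chevalley's commutator formula without further difficulty.
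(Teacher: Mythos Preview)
The paper does not give its own proof of this theorem; it is quoted from [\cite{mindim}] and closed with $\square$. So there is no in-paper argument to compare against, and I assess your proposal on its own merits.

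Your argument is largely correct but has one genuine gap. Via Lemma~\ref{truncated} you correctly determine exactly which root subgroups $X_{ij}$ lie in $\Pstab_U[A]$, and once you verify that $\mathcal R$ is closed this yields $U_{\mathcal R}\leqslant\Pstab_U[A]$. What is missing is the reverse containment $\Pstab_U[A]\leqslant U_{\mathcal R}$: knowing which one-parameter subgroups stabilise does not by itself force an arbitrary $u\in\Pstab_U[A]$ to be a product of such, because the action of a product on $[A]$ is not simply the conjunction of the actions of its root-subgroup factors, and the complement $\Phi^-\setminus\mathcal R$ (the hook legs) is not closed, so one cannot factor $U$ as $U_{\Phi^-\setminus\mathcal R}\cdot U_{\mathcal R}$.

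The fix is short and uses Lemma~\ref{2.15} in the case $L=\emptyset$, which gives $A.u=\pi_{\Phi^-}(Au^{-t})$ for a general $u\in U$. Set $v=u^{-t}$. Since row $i_\nu$ of $A$ is supported only at $j_\nu$, one has $(Av)_{i_\nu,s}=A_{i_\nu j_\nu}\,v_{j_\nu s}$; requiring this to equal $A_{i_\nu,s}=0$ for $j_\nu<s<i_\nu$ forces $v_{j_\nu s}=0$, i.e.\ $(u^{-1})_{s,j_\nu}=0$ for every $(s,j_\nu)\in\Phi^-\setminus\mathcal R$. Thus $\supp(u^{-1}-E)\subseteq\mathcal R$, hence $u^{-1}\in U_{\mathcal R}$ and $u\in U_{\mathcal R}$. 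With this addition your treatment of closedness of $\mathcal R$ and $\mathcal R^\circ$, normality, the quotient structure, and the character $\theta_A$ is correct as written.
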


Thus ${\cal R}$ consists of all positions in $\Phi^-$ in zero columns of $A$ together with all positions on and below the positions in $\p_{_A}$.

\begin{Defn}\label{5.5}
Let $[A]\in \cE$ be a verge with $ \supp(A)=\p_{_A}\subseteq \Phi^-$. $U_\cR=\Pstab_{\mbox{\tiny $U$}}[A]$ is defined as above. We define
$\hat \cR$ to be $\cR$ combined  with all positions on hook legs, such that the corresponding subgroups change in  $[A]$ only the values at a  hook intersection acting from the right. We illustrate as follows:
\end{Defn}

\begin{equation}
\begin{picture}(200,200)
\put(20,10){\line(0,1){190}}
\put(20,10){\line(1,0){190}}
\put(20,200){\line(1,-1){190}}

\put(50,50){\line(0,1){120}}
\put(50,50){\line(1,0){120}}
\put(50,50){\circle*{2}}
\put(53,53){\small$z$}

\put(70,80){\line(0,1){70}}
\put(70,80){\line(1,0){70}}
\put(70,80){\circle*{2}}
\put(73,82){\small$z$}

\put(130,60){\line(0,1){30}}
\put(130,60){\line(1,0){30}}
\put(130,60){\circle*{2}}
\put(121,60){\small$z$}

\put(90,30){\line(0,1){100}}
\put(90,30){\line(1,0){100}}
\put(90,30){\circle*{2}}
\put(93,32){\small$z$}

\put(90,50){\circle{4}}
\put(90,80){\circle{4}}
\put(130,80){\circle{4}}

\put(90,131){\small$b$}
\put(50,172){\small$j$}
\put(143,78){\small$a$}
\put(173,48){\small$i$}

\put(46.5,130){\framebox{}}

\put(66.5,130){\framebox{}}

\put(66.5,90){\framebox{}}

\put(50, 150){\line(1,1){35}}
\put(88, 188){\small$h_{ij}^l$}

\put(29,129){\tiny$(b,j)$}
\put(29,49){\tiny$(i,j)$}
\put(93,53){\tiny$(i,b)$}

\multiput(45.6,47)(0,-6){7}{$\times$}

\multiput(65.6,77)(0,-6){12}{$\times$}

\multiput(125.6,57)(0,-5.89){9}{$\times$}

\multiput(85.6,27)(0,-5.5){4}{$\times$}

\put(-40,90){\small verge $[A]=$}

\put(150,130){\circle{4}}
\put(158,127){= main hook intersections}
\put(147,150){$z$}
\put(158,150){= main conditions}

\put(146,106){$\square$}
\put(158,107){= positions in $\hat\cR\setminus \cR$}

\put(145,190){$\cR=$ positions in zero columns plus}
\put(155,170){ positions not above $z$ marked by $\times$}
\end{picture}
\end{equation}
For example $X_{bj}$ acting on $[A]$ will only change the entry at the main hook intersection $(i,b)$.


It was shown in [\cite{mindim}, 4.5], that $\hat \cR$ is a closed subset of $\Phi^-$ with $U_{\cR^o}, U_\cR\trianglelefteq U_{\hat \cR}$ and that $U_{\cal R}/U_{{\cal R}^\circ }\cong X_{i_1 j_1}\times \cdots\times X_{i_k j_k}$ is a central subgroup of $U_{\hat \cR}/ U_{\cR^o}$.

\begin{Defn}\label{hookconnected}
Let $$\p=\{(i_1, j_1),\ldots,(i_k,j_k)\}\subseteq \Phi^-$$ being a main condition set. We call $\p$ {\bf completely hook disconnected} if $\ppi=\{i_1, \ldots, i_k\}$ and $\pj=\{j_1, \ldots, j_k\}$ are disjoint. Thus, in this case, $\{i_1, \ldots, i_k, j_1, \ldots, j_k\}$  is a subset  of $\{1, \ldots,n\}$ of $2k$ many pairwise different indices.
\end{Defn}

 Thus if $[A]\in \cE$ is a verge with completely disconnected condition set $\p=\supp(A)$, no main hooks of $\p$ meet at the diagonal, that is $(a,i), (i,j) \in \p$ does not occur:

For our main application for section \ref{sec7}, this condition is automatically satisfied and is the special case of hook disconnected main condition sets defined in [\cite{mindim}, 5.1]. We state here the main result of [\cite{mindim}] for those, as far as they are needed here.

\begin{Results}[][\cite{mindim}]\label{5.8}
Let $\p\subseteq \Phi^-$ be a (completely) hook disconnected condition set and $[A]\in \cE$ be a verge with $\supp(A)=\p$. Then the following holds:
\begin{enumerate}
\item [1)] $ \hat  \cR ^{-} =\hat \cR\setminus \p$ is  closed in $\Phi^-$ with $U_{ \hat  \cR ^{-}}\trianglelefteq U_{\hat \cR}$. 
\item [2)] $\End_{\C U} (\C \cO_A)\cong \C( U_{ \hat  \cR}/U_{\cR})\cong \C(U_{\hat \cR^-}/U_{\cR^o})=\C H$.
\item [3)]  $U_{\hat \cR}/U_{\cR^o}\cong H\times X_{i_1j_1}\times \cdots \times X_{i_kj_k}$.
 If $S$ is an irreducible $\C H$-module, extending the action of $H$ on $S$ by the linear character $\theta_A$ defined in \ref{pstab} and letting $U_{\cR^o}$ act trivially yields an irreducible $\C U_{\hat \cR}$-module $\hat S$ such that $[A] \hat S \cong \hat S$ and $[A] \hat S \C U \cong \Ind^U_{U_{\hat \cR}}\hat S$ is an irreducible constituent of $\C \cO_A$.
\item [4)] $S\mapsto \hat S \mapsto [A]\hat S \C U$ is a multiplicity preserving bijection between the irreducible constituents of the group algebra $\C H$ and those of the $\C U$-module $\C \cO_A$. \hfill $\square$
\end{enumerate}
\end{Results}

\begin{Remark}\label{5.9}
So, in particular, choosing $S$ to be the trivial $\C H$-module, $[A]\hat S \C U\leqslant \C \cO_A$ is a unique irreducible constituent (of multiplicity one) of $\C \cO_A$, isomorphic to $\Ind^U_{U_{\hat \cR}} \C \epsilon_{\!_A}$, where $\epsilon_{\!_A}$ is the primitive central idempotent of $\C U_{\hat \cR}$ affording the linear character $\theta_A$ on $U_{\cR}/U_{\cR^o}$ and the trivial character on $H$. In particular, $\Stab_{\mbox{\tiny{$U$}}}(\epsilon_{\!_A})=U_{\hat \cR^-}\trianglelefteq U_{\hat \cR}=\Pstab_{\mbox{\tiny $U$}}(\epsilon_{\!_A})$.
\end{Remark}

\section{Two part compositions}\label{sec6}
In this section we apply the general method of section \ref{sec4} to the special case that $\lambda=(n-m,m), 1\leqslant m\leqslant n-1$ is a composition of $n$ into two parts. Thus $P_\lambda\leqslant GL_n(q)$ is a maximal parabolic subgroup and all maximal parabolic subgroups of $GL_n(q)$ are conjugate to some $P_\lambda, \lambda=(n-m,m)\vDash n$.  Moreover, if $\lambda=(n-m,m)\vDash n$ then $\mu=(m, n-m)\vDash n$ too and it is a well known fact, that $\Ind_{P_\lambda}^{GL_n(q)} \C \cong \Ind^{GL_n(q)}_{P_\mu} \C$. Thus, in the following, we may always assume that $\lambda=(n-m,m)$ is indeed a partition of $n$, that is $m\leqslant n-m$.
Note that the set $\cF(\lambda)$ of $\lambda$-flags is the set of $m$-dimensional $\F_q$-subspaces of $\F_q^n$. By \ref{3.11} we may identify $\cF(\lambda)$ by matrices in $\fX_\lambda=\{du\,|\,d\in \cD_\lambda, u\in (U_\lambda^-)^d\cap U\}$ where each matrix in $\fX_\lambda$ is divided into two compartments, the upper compartment of $A\in \fX_\lambda$ consisting of the first $(n-m)$ and the lower one of the last $m$ rows of $A$. For $\s\in \RStd(\lambda)$, we denote the second row of $\s$ by $\ss$, thus $\ss=(i_1, \ldots, i_m)$ with $1\leqslant i_1< i_2<\cdots<i_m\leqslant n$. Obviously $\ss$ determines $\s$ uniquely the first row of $\s$ consisting of all numbers $i\in \{1, \ldots, n\}$ with $i\notin \ss$ ordered from left to right by the natural ordering of $\N$. Note that by our convention introduced in section \ref{sec4}, the rows in the lower compartment of $A\in \fX_\s^0$ are  labelled by $i_1, \ldots, i_m$, (comp. \ref{4.11}). Fix $\s\in \RStd(\lambda), d=\d(\s)$ and recall Definition \ref{4.4}. In particular:
\begin{eqnarray}
L&=&L(\s)=\{(i,j)\in \Phi^-\,|\,i,j\in \ss \text{ or } i,j\notin \ss\}\nonumber\\
J&=&J(\s)=\{(i,j)\in \Phi^-\,|\, i\in \ss, j\notin \ss\}\nonumber\\
K&=&K(\s)=L\cup J.\label{6.1}
\end{eqnarray}
Let $e_{_A}\in \cE_J$, then $A+E\in U_J$. Then all entries in the first compartment of $A$ are zero, and hence $e_{_A}$ is entirely determined by the lower compartment. Thus, in illustrations, we may omit the first compartment.

\begin{Remark}\label{action}
By \ref{4.10} $U_J$ is abelian and acts on the lidempotents in $\cE_J$ by linear characters. Moreover $L$ splits into $L_1=\{(i,j)\in \Phi^-\,|\, i,j\notin \ss\}$ and $L_2=\{(i,j)\in \Phi^-\,|\, i,j\in \ss\}$, the positions in $L_1$ belonging to the upper and in $L_2$ to the lower compartment in matrices in $\fX_\s$. By \ref{4.9}, $U_{L}=U_{L_1}\times U_{L_2}$, $U_{L_1}\cong U^-_{n-m}(q), U_{L_2}\cong U^-_{m}(q)$, where $U_{L_1}$ acts by truncated  column and $U_{L_2}$ by truncated row operations on $\cE_J$ permuting $\cE_J$. For $e_{_A}\in \cE_J$ we denote the $U_K$-orbit in $\cE_J$ containing $e_{_A}$ by $\cO_{_A}^J$ as in \ref{orbit}.
\end{Remark}

Throughout this section we shall use the setting and notation introduced above without further notice.
The permutation module $\Res_{U}^{GL_n(q)} M(\lambda)$ has been investigated in [\cite{brandt, dj1}] and [\cite{guo}] using the basis  of $M(\lambda)$ consisting of $\lambda$-flags. The basis $\fX_\lambda$ was constructed there. Comparing Proposition \ref{4.9} with section 2 of [\cite{guo}] shows that indeed our construction here contains the exposition in [\cite{guo}] as a special case.


For the convenience of the reader we summarize the relevant results on $M_\s$ shown in [\cite{guo}].

\begin{Results}\label{results}
\begin{itemize}
\item [(1)] [\cite{guo}, 2.5.6]  Each $U_K$-orbit $\cO^J$ of $\cE_J$ contains a unique lidempotent $e_{_A}, A\in V_J$, such that in each  row and in each column of $A$ there is at most one non-zero entry. Similarly as in \ref{def of verge} we call such  lidempotent  {\bf verge (lidempotent)} of $\cE_J$ and define $ \main(\cO^J)=\main(e_{_A})=\supp(A)$.  Note, that then $\p=\p_{_A}=\supp ( A)$ is a main condition set in $\Phi^-$ as defined in \ref{def of verge}. In particular, (putting the rows of $A$ again in the natural order) $[A]\in \cE$ is a verge in the lidempotent basis of $\C (V_{\Phi^-},+)$.

\item [(2)]  Let $e_{_A}\in \cE_J$ be a verge and $\p=\p_{_A}= \main(e_{_A})$ its main condition set. Recall from \ref{def of verge} the definition of $\ppi=\{i\in \{1, \ldots,n\}\,|\, \exists\, 1\leqslant j<i: (i,j)\in \p\}$ and $\pj=\{j\in \{1, \ldots,n\}\,|\, \exists\,  j<i\leqslant n: (i,j)\in \p\}$. Let $\mu=(n-k,k)\vDash n, \t\in \RStd(\mu)$. We say  $\p$ {\bf fits} the $\t$-component $M_\t$ of  $\Res^{GL_n(q)}_U M(\mu)$, if $\ppi\subseteq \tt$ and $\pj\cap \tt=\emptyset$.
\item [(3)] [\cite{guo}, 2.5.10] Let $e_{_A}\in \cE_J$ be a verge with main condition set  $\p=\p_{_A}\subseteq \Phi^-$. 
\begin{itemize}
\item [\romannumeral1)]  Define $L_1^0=\{(i,j)\in L_1\,|\,  j\notin \pj\text{ or }\exists\, (b,j)\in \p \text{ with } b<i\}$. Thus $L_1^0$ consists of all positions $(i,j)$ in $L_1$ in the upper compartment of $dA$, $d=\d(\s)\in \cD_\lambda$, where either column $j$ is a zero column (if and only if $j\notin \pj$) or there is a main condition $(b,j)$ in column $j\in \pj$, above position $(i,j)$ in $A$ (in the natural order of rows of $A$). 


Note that in both cases $(i,j)\in \cR^o$ and hence $X_{ij}\in \Stab_{\mbox{\tiny$U$}} [A]$ by \ref{pstab}. We illustrate the case $j\in \pj$. Let $ \alpha\in \F_q$:
\begin{center}
\begin{picture}(220,170)
\put(30,0){\line(0,1){120}}
\put(30,0){\line(1,0){120}}
\put(150,0){\line(0,1){120}}
\put(150,120){\line(-1,0){120}}
\put(30,40){\line(1,0){120}}
\put(25,20){\line(1,0){10}}
\put(15,15){$b$}
\put(25,70){\line(1,0){10}}
\put(15,65){$i$}
\put(25,90){\line(1,0){10}}
\put(15,85){$j$}
\put(50,20){\circle*{4}}
\put(40,8){\footnotesize$(b,j)\in \p$}

\put(100,20){\circle{4}}
\put(95,8){\footnotesize$(b,i)$}

\put(30,0){\line(1,0){120}}
\put(50,115){\line(0,1){10}}
\put(48,130){$j$}
\put(70,115){\line(0,1){10}}
\put(68,130){$b$}
\put(100,115){\line(0,1){10}}
\put(98,130){$i$}

\put(50,140){\line(0,1){10}}
\put(50,150){\line(1,0){50}}
\put(100,150){\vector(0,-1){10}}
\put(53,152){\small$-\alpha$ times}
\put(170,75){upper compartment of $dA$}
\put(170,15){lower compartment of $dA$}

\put(-45,145){\small Acting  by $x_{ij}(\alpha):$}

\put(50,20){\line(-2,1){60}}

\put(-75,65){\small only non-zero}
\put(-75,55){\small entry in column $j$}
\put(-75,45){\small  of $A$}
\end{picture}
\end{center}


If $(b,j)\in \p$ with $b<i$, then $(b,i)\notin \Phi^-$ and hence $(b,i)\notin J$. Now $e_{_A}x_{ij}(\alpha)=e_{_B}$, where $B$ is obtained from $A$ by adding $-\alpha$ times column $j$ to column $i$ and projecting the resulting matrix into $V_J$. Since $(b, i)\notin J$ we conclude that $e_{_A}x_{ij}(\alpha)=e_{_A}$. Indeed $L_1^0$ is closed in $\Phi^-$ and $U_{L_1^0}=\Stab_{\mbox{\tiny $U_{L_1} $}}(e_{_A})$.

\item[\romannumeral2)] Define $L_2^0=\{(i,j)\in L_2\,|\  i\notin \ppi\text{ or }\exists\, (i,v)\in \p \text{ with } v>j,\}$. Thus $L_2^0$ consists of positions $(i,j)$ in $L_2$ in the lower compartment of $dA$, where either contained in a zero row  of $A$ (if and only if $i\notin \ppi$) or the main condition in row $i$ is to the right of $(i,j)$. Again we illustrate the situation in the second case $i\in \ppi$ (ommiting from $dA$ the upper compartment): Let $ \alpha \in \F_q$:
\begin{center}
\begin{picture}(250, 95)
\put(30,10){\line(1,0){140}}
\put(30,70){\line(1,0){140}}
\put(30,10){\line(0,1){60}}
\put(170,70){\line(0,-1){60}}
\put(70,65){\line(0,1){10}}
\put(68,80) {$j$}
\put(90,65){\line(0,1){10}}
\put(88,80) {$v$}
\put(140,65){\line(0,1){10}}
\put(138,80) {$i$}
\put(25,45){\line(1,0){10}}
\put(15,42) {$j$}
\put(25,25){\line(1,0){10}}
\put(15,22) {$i$}
\put(90,25){\circle*{4}}
\put(95,22){\footnotesize$(i,v)\in \p$}
\put(90,45){\circle{4}}
\put(95,42){\footnotesize$(j,v)$}
\put(190,38){lower compartment of $dA$}
\put(10,25){\line(-1,0){10}}
\put(0,25){\line(0,1){20}}
\put(0,45){\vector(1,0){10}}
\put(-40,30){\small$\alpha$ times}
\put(-68,58){\small Acting  by $x_{ij}(\alpha):$}
\end{picture}
\end{center}
Again, since $j<v$ and $(j,v)$ is the only position being changed in $dA$, when adding $\alpha$ times row $i$ to row $j$. But this entry at position $(j,v)$ is set back to zero by truncation. Thus $e_{_A}x_{ij}(\alpha)=e_{_A}$, indeed $L_2^0\subseteq\Phi^-$ is closed and $U_{L_2^0}=\Stab_{\mbox{\tiny $U_{L_2} $}}(e_{_A})$.
\item  [\romannumeral3)] The stabilizer of $e_{_A}$ in $U_K$ is not a pattern subgroup in general. To see this consider the following situation: 
Let $(s,i), (t,j)\in \p$ with $1\leqslant j<i<t<s\leqslant n$ and $\alpha\in \F_q$:

\begin{center}
\begin{picture}(225,130)
\put(-80,52){\small Acting by $x_{st}(\alpha):$}
\put(30,10){\line(1,0){140}}
\put(30,90){\line(1,0){140}}
\put(30,10){\line(0,1){80}}
\put(170,90){\line(0,-1){80}}
\put(70,85){\line(0,1){10}}
\put(68,100) {$j$}
\put(90,85){\line(0,1){10}}
\put(88,100) {$i$}
\put(120,85){\line(0,1){10}}
\put(118,100) {$t$}
\put(140,85){\line(0,1){10}}
\put(138,100) {$s$}
\put(25,45){\line(1,0){10}}
\put(15,42) {$t$}
\put(25,25){\line(1,0){10}}
\put(15,22) {$s$}
\put(90,25){\circle*{4}}
\put(93,16){\footnotesize$(s,i)$}
\put(90,45){\circle{4}}
\put(94,49){\footnotesize$(t,i)$}
\put(185,42){lower compartment of $dA$}
\put(10,25){\line(-1,0){10}}
\put(0,25){\line(0,1){20}}
\put(0,45){\vector(1,0){10}}
\put(-35,30){\footnotesize$\alpha$ times}

\put(70,45){\circle*{4}}
\put(46,45){\footnotesize$(t,j)$}

\put(70,110){\line(0,1){10}}
\put(70,120){\line(1,0){20}}
\put(90,120){\vector(0,-1){10}}
\put(63,122){\footnotesize$-\alpha$ times}

\put(-30,115){\small Acting by $x_{ij}(\alpha):$}

\multiput(74,45)(7,0){14}{\line(1, 0){5}}
\multiput(70,48)(0,7){5}{\line(0, 1){5}}

\multiput(93,25)(7.2,0){11}{\line(1, 0){5}}
\multiput(90,28)(0,7){9}{\line(0, 1){5}}

\end{picture}
\end{center}

Then $(t,i)\in J$ is a main hook intersection. Note that $i,j\notin \ss$, $s, t\in \ss$ hence $(i,j)\in L_1, (s, t)\in L_2$ are in the upper respectively in the lower compartment. Acting by $x_{ij}(\alpha)$ ($\alpha\in \F_q$) on $e_{_A}$ adds $-\alpha$ times column $j$ to column $i$ and hence inserts $-\alpha A_{tj}$ into position $(t,i)$. Acting by $x_{st}(\beta)$ ($\beta\in \F_q$) adds $\alpha$ times row $s$ to row $t$ and hence inserts $\beta A_{si}$ into position $(t,i)$. In both cases, the entry at position $(t, i)$ is the only one which is changed. Note that by assumption $A_{tj}\neq 0 \neq A_{si}$ and hence choosing $\beta=\alpha \frac{A_{tj}}{A_{si}}\in \F_q$ we have $e_{_A} x_{ij}(\alpha)x_{st}(\beta)=e_{_A}$, that is $x_{ij}(\alpha)x_{st}(\beta)\in \Stab_{\mbox{\tiny $U_{K} $}}(e_{_A})$.
Note that inspecting \ref{pstab} we see that $(i,j)\in \hat \cR\setminus \cR$ and hence $X_{ij}\leqslant U_{\hat \cR}$.
\end{itemize}

\item[(4)] [\cite{guo}, 2.5.14] Let $ {\cO^J}\subseteq\cE_J$ be an $U_K$-orbit. Then   $\C \cO^J$ is an irreducible $\C U_K$-module.
\item [(5)] Recall from \ref{4.6} and \ref{2.11} that $\Res^U_{U_K} M_\s\cong \C \cE_J$, where $M_\s=\Ind^{U}_{U_P} \C \overline{P_\lambda}\,d$ by \ref{mackey}, $d=\d(\s)\in \cD_\lambda, U_P=P_\lambda^d\cap U$ (by \ref{4.5}) and $\overline{P_\lambda}=\sum_{h\in P_\lambda} h$. An isomorphism from $\C \cE_J$ to $\Res^{U}_{U_K} M_\s$ can be described explicitly by $f^*: \C (V_J,+)\rightarrow \C U_K: \tau \mapsto \tau \circ f$ for $\tau \in \C^{V_J}\cong \C (V_J,+)$, see \ref{1.8}. Here $f: U_K\rightarrow V_J$ is the 1-cocycle of \ref{2.10}. Recall that $f|_{U_J}:U_J\rightarrow V_J$ is given as $f(u)=u-E$ by the proof of \ref{2.10}. Setting $\hat  U_L=q^{-|L|}\sum_{x\in U_L}x$, we have now $f^*(A)=\hat U_L(A+E)\in \C U_K$, since   $U_L=\ker f$ by \ref{1.9}. Hence 
\[
f^*(e_{_A})=f^*(q^{-|J|}\sum\nolimits_{B\in V_J}\overline {\chi_{_A}(B)}B)=q^{-|J|}\hat U_L \sum\nolimits_{u\in U_J}\overline {\chi_{_A}(u-E)}u
\]
Observing that $U_L\subseteq U_P\subseteq P_\lambda^d, M_\s=\Ind^U_{U_P} \C \overline {P_\lambda} d=\overline {P_\lambda} d\,\C U$, we see that $\overline {P_\lambda} d=\overline {P_\lambda} d\hat U_P=\overline {P_\lambda} d\hat U_L$, where $\hat U_P=q^{-|P|}\sum_{x\in U_P}x$. Therefore we may identify $f^*(e_{_A})$ with $\sum\limits_{u\in U_J}\overline {P_\lambda} d\,\overline {\chi_{_A}(u-E)}u$   $\in \overline {P_\lambda} d\, \C U\cong M_\s$.
Let $ {\cO^J}\subseteq\cE_J$ be an $U_K$-orbit, $\tilde \cO^J\subseteq \overline{P_\lambda}d \C U$
its image under the identifications, then it was shown in [\cite{guo}, 2.6.2], that $\C \tilde \cO^J\subseteq \overline{P_\lambda}d \C U $ is $U$-invariant. Since $\C \tilde \cO^J\cong \C \cO^J$ is an irreducible $\C U_K$-module, we conclude that the $U_K$-action on $\C \cO^J$ can be extended to $U$ yielding the irreducible $\C U$-module $\C \cO^J\cong \C \tilde \cO^J$.
\hfill$\square$\end{itemize}
\end{Results}

\begin{Remark}
For a given main condition $\p\subseteq \Phi^-$, it might fit many irreducible orbit modules for different components and even for different 2-part partitions. For example: Let  $0\neq \alpha\in \F_q$.
$$\begin{matrix}
\begin{tabular}{p{2mm}c}
                                  &\begin{tabular}{*{6}{p{2mm}}}\emph 1   & \emph  2 &\emph3  &\emph4 &\emph5&\emph6\end{tabular}\\
\begin{tabular}{c}\emph3 \\\emph5 \\\emph6 \end{tabular} &\begin{tabular}{|*{6}{p{2mm}|}}\hline
                                     {$\ast$}&  {$\square$}  & 1& & & \\\hline
                                    {$\alpha$}&   {$\ast$}  &0 &  {$\ast$}&1 & \\\hline &&0&&0&1\\\hline
                                   \end{tabular}
\end{tabular}

& &

\begin{matrix}
\lambda=(3,3),&
\t=\begin{tabular}{|c|c|c|}\hline 1&2&4\\\hline 3&5&6\\\hline\end{tabular}
\end{matrix}
\end{matrix}$$

$$\begin{matrix}
\begin{tabular}{p{2mm}c}
                                  &\begin{tabular}{*{6}{p{2mm}}}\emph 1   &   \emph2 &\emph3  &\emph4 &\emph5&\emph6\end{tabular}\\
\begin{tabular}{c}\emph2 \\\emph5 \\\emph6 \end{tabular} &\begin{tabular}{|*{6}{p{2mm}|}}\hline
                                     {$\ast$}&  1  & & & & \\\hline
                                   {$\alpha$}& 0&  {$\ast$}   &   {$\ast$}&1 & \\\hline &0&&&0&1\\\hline
                                   \end{tabular}
\end{tabular}
& &
\begin{matrix}
\lambda=(3,3),&\t=\begin{tabular}{|c|c|c|}\hline 1&3&4\\\hline 2&5&6\\\hline\end{tabular}
\end{matrix}
\end{matrix}$$

$$\quad\,\,\,\begin{matrix}
\begin{tabular}{p{2mm}c}
                                  &\begin{tabular}{*{6}{p{2mm}}} \emph1   &  \emph 2 &\emph3  &\emph4 &\emph5&\emph6\end{tabular}\\
\begin{tabular}{c}\emph2 \\\emph5 \end{tabular}&\begin{tabular}{|*{6}{p{2mm}|}}\hline
                                      {$\ast$}&  1  & & & & \\\hline
                                {$\alpha$}& 0&  {$\ast$}   &  {$\ast$}&1 & \\\hline
                                   \end{tabular}
\end{tabular}& &
\begin{matrix}
\lambda=(4,2),& \t=\begin{tabular}{|c|c|c|c|}\hline 1&3&4&6\\\hline 2&5\\\cline{1-2}\end{tabular}
\end{matrix}
\end{matrix}$$
The three orbits above have the same  main condition set $\p=(5,1)$ with the filling $\alpha$ and the same dimension $ q^3$. 
\end{Remark}

In \ref{results} part (4) and (5) we have seen that the $\C$-spaces spanned by orbits $\cO^J\subseteq \cE_J$ are irreducible $\C U$-modules. The next theorem states, that these $\C U$-modules depend not really on $\cO^J\subseteq \cE_J$ or even on the 2-part partition $\lambda=(n-m,m)$, but only on the main condition set $\p=\main (\cO^J)$ and the non-zero values of $A$ at positions in $\p$ for the unique verge lidempotent $e_{_A}\in \cO^J$.

\begin{Theorem}[][\cite{guo}, 3.1.30] \label{irr2} 
  Let $\lambda, \mu$ be 2-part partitions of $n$ and let $\s\in \RStd(\lambda), \t\in \RStd(\mu)$. Let $e_{_A}\in \cE_{J(\s)}$ and $e_{_B}\in \cE_{J(\t)}$ be verges with $A=B$ in $ M_n(q)$. Thus 
$\main( \cO_A^{J(\s)})=\supp(A)=\p=\supp(B)=\main( \cO_B^{J(\t)})$ and $A_{ij}=B_{ij}$ for all $(i,j)\in \p$. Then $\C \cO_A^{J(\s)}\cong \C \cO_B^{J(\t)}$ as $\C U$-modules.
\end{Theorem}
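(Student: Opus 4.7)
The plan is to identify $\C\cO_A^{J(\s)}$, viewed as an irreducible $\C U$-module via \ref{results}(5), with a canonical irreducible $\C U$-module depending only on the matrix $A$; once this is done, the same identification applied to $B=A$ and $\t$ yields $\C\cO_B^{J(\t)}$, and the theorem follows. The canonical target is supplied by Section \ref{sec5}: regard $[A]\in\cE$ as a verge in the full supercharacter lidempotent basis (so $J=\Phi^-,L=\emptyset$) and set $T_A:=\Ind_{U_{\hat\cR}}^{U}\C\epsilon_A$, the distinguished multiplicity-one constituent of $\C\cO_A$ singled out in \ref{5.9}.

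First I would verify that $\p=\supp(A)$ is completely hook disconnected in the sense of \ref{hookconnected}. Because $\supp(A)\subseteq J(\s)=\{(i,j)\in\Phi^-\,|\,i\in\ss,\ j\notin\ss\}$, every element of $\ppi$ lies in $\ss$ while every element of $\pj$ lies in its complement, so $\ppi\cap\pj=\emptyset$. This places us inside the scope of \ref{5.8} and \ref{5.9}, so $T_A$ is a well-defined irreducible $\C U$-module depending only on the matrix $A$ (equivalently, only on $\p$ and on the values $A_{ij}$ for $(i,j)\in\p$).

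Next, using the explicit description $f^*(e_A)=q^{-|J|}\hat U_L\sum_{u\in U_J}\overline{\chi_A(u-E)}\,u$ recorded in \ref{results}(5), I would exhibit a cyclic vector $\xi_A\in\C\cO_A^{J(\s)}\subseteq M_\s$ whose $U$-projective stabilizer is exactly $U_{\hat\cR}$, with scalar action obtained by inflating the linear character $\theta_A$ of \ref{pstab} trivially across the factor $H$ of \ref{5.8}(3). For root subgroups $X_{ij}$ with $(i,j)\in K$ this is the content of \ref{results}(3)(i),(ii) (outright stabilization) together with \ref{results}(3)(iii) (the compensating-pair mechanism, which matches the non-pattern part $U_{\hat\cR}\setminus U_\cR$ highlighted after \ref{5.5}). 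For root subgroups with $(i,j)\notin K$ I would transport the action through the $\overline{P_\lambda}\,d$ factor of $f^*(e_A)$ and check that the induced action falls into the same scheme. Frobenius reciprocity then yields a non-zero $\C U$-homomorphism $T_A\to\C\cO_A^{J(\s)}$, and irreducibility of both sides forces it to be an isomorphism.

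The main obstacle is precisely this stabilizer calculation. The extended $U$-action on $\C\cO_A^{J(\s)}$ from \ref{results}(5) cannot be read off by pattern-subgroup bookkeeping alone: the compensating product $x_{ij}(\alpha)x_{st}(\alpha A_{tj}/A_{si})$ in \ref{results}(3)(iii) must be matched precisely with the relations in $U_{\hat\cR}/U_{{\cR^\circ}}$ that fix $\epsilon_A$. Although the clean descriptions in \ref{pstab}, \ref{5.5} and \ref{results}(3) reduce this matching to combinatorics on positions, it must be carried out uniformly over all main hook intersections and across both compartments, and the contributions of roots outside $K$ must be traced through the coset representative $\overline{P_\lambda}\,d$.
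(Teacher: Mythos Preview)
The paper does not itself prove this theorem; it is quoted from [\cite{guo}, 3.1.30]. However, Section~\ref{sec7} carries out precisely the programme you sketch, culminating in Theorem~\ref{7.16}: one identifies $\C\cO_A^{J}$ with the canonical irreducible constituent $\Ind_{U_{\hat\cR}}^U\C\epsilon_{_A}\leqslant\C\cO_A$ of \ref{5.9}, which depends only on the matrix $A$, and \ref{irr2} (together with its converse, \ref{7.17}(1)) drops out immediately. So your overall strategy and the paper's coincide.

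What you have not located is the right cyclic vector. The paper, after passing via Remark~\ref{6.6} to the full-condition-set tableau $\ss=\ppi$, takes $\hat e_{_A}:=e_{_A}\ff$ with $\ff=\hat U_{L_2}=q^{-|L_2|}\sum_{x\in U_{L_2}}x$ (Definition~\ref{fide}). This averaging over the lower-compartment Levi factor is exactly what dissolves your ``main obstacle''. In each compensating pair $x_{ij}(\alpha)x_{st}(\beta)$ of \ref{results}(3)(iii) the second factor lies in $U_{L_2}$, so $\hat e_{_A}x_{ij}(\alpha)=e_{_A}x_{ij}(\alpha)\ff=e_{_A}x_{st}(\beta)\ff=e_{_A}\ff=\hat e_{_A}$, and one gets $X_{ij}\leqslant\Stab_U(\hat e_{_A})$ outright (Lemma~\ref{7.7}). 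For roots in $I=\Phi^-\setminus K$ the same averaging lets one compute $\overline{P_\lambda}\,du\,\ff\,g=\overline{P_\lambda}\,du\,\ff$ for all $g\in U_I$ by an explicit manipulation of coset sums inside $M_\s$ (Lemmas~\ref{7.10}--\ref{7.11}, Corollaries~\ref{7.13}--\ref{7.14}); Lemma~\ref{7.15} then checks that the pieces $L_2,\,L_1^0,\,L_1^1,\,J^0,\,I$ assemble to all of $\hat\cR$, with the correct linear character at positions in $\p$ supplied by Lemma~\ref{7.8}. Without the $\ff$-averaging, the raw verge $e_{_A}$ has a genuinely non-pattern stabilizer in $U_K$ and the extended $U_I$-action has no usable description, so your plan as written would indeed stall at the point you flag.
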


As a consequence of the main result \ref{7.16} of this paper we obtain that the converse of Theorem \ref{irr2} holds as well, that is $\C \cO_A^{J(\s)}\cong \C \cO_B^{J(\t)}$ if and only if $A=
B$ in $M_n(q)$.

\begin{Remark}\label{6.6}
Let $\lambda=(n-m,m)$ be a partition of $n$, $\s\in \RStd(\lambda)$ and $\cO^J\subseteq \cE_J$ an $U_K$-orbit, $J=J(\s)$ as above. Let $\p=\{(i_1, j_1), \ldots, (i_k ,j_k)\}=\main (\cO^J)$ the set of main conditions for the unique verge $e_{_A}$ ($A\in V_J$) in $\cO^J=\cO_A^J$. Since $\p\subseteq J=\{(i,j)\in \Phi^-\,|\, i\in \ss, j\notin \ss\}$ we conclude that $\ppi\cap \pj=\{i_1, \ldots, i_k\}\cap\{j_1, \ldots,j_k\}=\emptyset$. Thus  $\p\subseteq \Phi^-$ is a completely hook disconnected main condition set. Moreover $k\leqslant m$. Let $\ss\setminus\ppi=\{i_{k+1}, \ldots, i_m\}$, then $\mu=(n-k,k)$ is as well a partition of $n$ and $\t\in \RStd(\lambda)$ with $\tt=(i_1,\ldots, i_k)$ (assuming as we always do, that $1\leqslant i_1<i_2<\cdots<i_k\leqslant n$). It is obvious that  $\p$ fits $\t$ as well and $A\in V_{J(\t)}$ as well. By \ref{irr2} $\C \cO_A^{J(\s)}\cong \C \cO_A^{J(\t)}$. This indeed works for arbitrary completely hook disconnected main condition sets $\p=\{(i_1, j_1), \ldots, (i_k, j_k)\}\subseteq \Phi^-$. Since then $\ppi\cap \pj=\emptyset$, we always have $2k\leqslant n$ and hence $(n-k,k)$ is a partition of $n$, and there is a unique row standard $\lambda$-tableau $ \t $ with $\tt=\ppi=(i_1, \ldots, i_k)$. We remark that $A$ is in fact standard, that is increasing in the columns as well. This is not hard to see, but is not needed in this paper. If $|\p|=k$ and $\lambda=(n-k,k)$, each row of the lower compartment of $A$ ($e_{_A}\in \cE_J$ a verge, $\p=\supp(A)$) carries a condition. We say in this case that the corresponding orbit module $\C \cO_A^J$ has {\bf full condition set}. 
\end{Remark}

\section{$\C \cO_A^J$ as constituent of $[A]\C U$}\label{sec7}
Let $\p=\{(i_1,j_1), (i_2, j_2), \ldots, (i_m, j_m)\}\subseteq \Phi^-, 1\leqslant i_1<i_2<\cdots<i_m\leqslant n$ be a completely hook disconnected main condition set in $\Phi^-$, and let $A\in M_n(q)$ be such that $\supp (A)=\p$.
Let $\lambda=(n-m,m)$
 and let $\s$ be the unique row standard $\lambda$-tableau  with $\ss=(i_1, \ldots, i_m)$.
Thus, in view of \ref{6.6}, $e_{_A}\in \cE_J, J=J(\s)$ as in \ref{6.1}, is a verge, $\main(e_{_A})=\p$ and $\C \cO_A^J$ has full condition set. Being an irreducible  $\C U$-module by \ref{results} part (5), $\C \cO_A^J$ must occur as irreducible constituent of $[B]\C U$ for precisely one verge $[B]\in \cE=\cE_{\Phi^-}$, $B\in \Lie(U)=\{u-E\,|\, u\in U\}$. We shall show that $B=A\in V_J\leqslant \Lie(U)$. However, as we shall see, a $\C U$-homomorphism from $\C \cO_A^J$ into $\C \cO_A$,  ($\cO_A=\cO_A^{\Phi^-}$) will not take $e_{_A}\in \cO_A^J$ to $[A]\in \cO_A$.


Throughout this section $A\in V_J, J=J(\s)$, with $\supp(A)=\p$. We use the notation of the previous sections freely. In particular $J=J(\s), L=L(\s)$ and $K=K(\s)$ are as in \ref{6.1}. Recall from \ref{action} that $L$ splits into closed subsets $L_1=\{(i,j)\in \Phi^-\,|\, i,j\notin \ss\}$ and $L_2=\{(i,j)\in \Phi^-\,|\, i,j\in \ss\}$ of $\Phi^-$.

\begin{Defn}\label{fide}
Throughout let $\ff=\hat U_{L_2}$ be the trivial idempotent of $U_{L_2}$, that is $f=q^{-|L_2|}\sum _{x\in U_{L_2}}x$, and set $\hat e_{_A}=e_{_A} \ff \in \C \cO_A^J$.
\end{Defn}

\begin{Lemma}\label{7.2}
 $\hat e_{_A}\neq 0$. Thus $\hat e_{_A}\C U=\C \cO_{_A}^J$.
\end{Lemma}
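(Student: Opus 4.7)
The plan is to use the fact that $U_L$ (and in particular $U_{L_2}$) acts on the lidempotent basis $\cE_J$ by pure permutations, with no scalar $\theta$-factors, so that averaging over $U_{L_2}$ cannot produce cancellation.

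First I would unpack $\hat e_{_A}=e_{_A}\ff=q^{-|L_2|}\sum_{x\in U_{L_2}}e_{_A}x$. Since $L_2\subseteq L$ and $L\subseteq K\setminus J$, Theorem~\ref{2.17} (equation (\ref{2.199})) together with \ref{action} tells us that for every $x\in U_{L_2}$ and every $e_{_C}\in \cE_J$ we have $e_{_C}x=e_{_{C.x}}\in \cE_J$ (no multiplicative $\theta$-coefficient, as $(i,j)\in L_2$ falls in the $L$-case of (\ref{2.199})). Consequently each summand $e_{_A}x$ is itself a member of $\cE_J$, and grouping summands by $U_{L_2}$-orbit we obtain
\begin{equation*}
\sum_{x\in U_{L_2}}e_{_A}x \;=\; \bigl|\Stab_{U_{L_2}}(e_{_A})\bigr|\sum_{e_{_B}\in \cO}e_{_B},
\end{equation*}
where $\cO$ is the $U_{L_2}$-orbit of $e_{_A}$ in $\cE_J$.

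Next I would invoke the fact that $\cE_J$ is a $\C$-basis of $\C(V_J,+)$ (see \ref{2.177}). The displayed sum is therefore a $\C$-linear combination of distinct basis elements whose coefficients are all equal to the positive integer $|\Stab_{U_{L_2}}(e_{_A})|$. Since the orbit $\cO$ is nonempty (it contains $e_{_A}$), the sum is nonzero in $\C(V_J,+)$, whence $\hat e_{_A}\neq 0$.

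Finally, by \ref{results}(4) the $\C U_K$-module $\C\cO_A^J$ is irreducible, and by \ref{results}(5) the $U_K$-action extends to a $\C U$-action under which $\C\cO_A^J$ remains irreducible. Since $\hat e_{_A}$ is a nonzero vector of this irreducible $\C U$-module, the cyclic submodule $\hat e_{_A}\C U$ must equal all of $\C\cO_A^J$. The only conceivable obstacle is the nonvanishing step, and that is immediate once one observes that $U_{L_2}$ acts purely by permutations on the basis $\cE_J$, so no roots of unity appear to cause cancellation.
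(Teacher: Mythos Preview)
Your argument is correct. The key observation that $U_{L_2}\leqslant U_L$ acts on the lidempotent basis $\cE_J$ purely by permutations (this is \ref{2.11}\,4) or, equivalently, the $L$-case of (\ref{2.199})) immediately forces $\hat e_{_A}$ to be a positive-coefficient sum of basis vectors, hence nonzero; the second sentence then follows from irreducibility of $\C\cO_A^J$ as a $\C U$-module exactly as you say.

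The paper's proof reaches the same conclusion by a more explicit route: it identifies $\Stab_{U_{L_2}}(e_{_A})=U_{L_2^0}$, chooses a convenient ordering of $L_2$, and tracks precisely which positions above the main conditions are filled by the action of $U_{L_2}\setminus U_{L_2^0}$. This yields the concrete formula
\[
\hat e_{_A}=q^{-|L_2|+|L_2^0|}\sum_B e_{_B},
\]
with $B$ ranging over those matrices in $V_J$ that agree with $A$ except for arbitrary entries in columns of $\p$ above the main conditions. Your approach is cleaner for the bare statement of the lemma, but the paper's explicit description of $\hat e_{_A}$ is not wasted work: it is invoked again verbatim at the start of the proof of Lemma~\ref{7.8} to compute how the root subgroups $X_{aj}$ with $(a,j)\in J^0$ act on $\hat e_{_A}$. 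If you adopt your shorter argument here, you would need to supply that explicit description separately when you reach \ref{7.8}.
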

\begin{proof}
Recall from \ref{results} part (3) \romannumeral2), that $\Stab_{\mbox{\tiny $U_{L_2} $}}(e_{_A})=U_{L_2^0}$. Choose the following linear ordering of $L_2$ and take all products of elements $x_{ij}(\alpha), (i,j)\in L_2$ in this fixed order: First we take the roots $(i,j)\in L_2^0$ in an arbitrary ordering, then the remaining positions $(i,j)$ in $L_2$ along columns top down and rows from left right. Thus $(i,j)<(a,b)$ in $L_2\setminus L_2^0$ implies $i, j, a, b\in \ss$, the main conditions in row $i$ and $a$ are to the left of $(i,j)$ and $(a,b)$ respectively, and $j<b$ or $j=b$ and $i<a$. 

\begin{center}
\begin{picture}(250, 100)
\put(-65, 81){\small Acting on $e_{_A}$}
 \put(-60, 69){\small by $x_{ij}(\alpha)$:}
\put(30,10){\line(1,0){140}}
\put(30,70){\line(1,0){140}}
\put(30,10){\line(0,1){60}}
\put(170,70){\line(0,-1){60}}
\put(70,65){\line(0,1){10}}
\put(68,80) {$k$}
\put(110,65){\line(0,1){10}}
\put(108,80) {$j$}
\put(140,65){\line(0,1){10}}
\put(138,80) {$i$}
\put(25,45){\line(1,0){10}}
\put(15,42) {$j$}
\put(25,25){\line(1,0){10}}
\put(15,22) {$i$}
\put(105,20){$\times$}
\put(115,21){\footnotesize$(i,j)$}
\put(66.5,45){\framebox}
\put(78,44){\footnotesize$(j,k)$}
\put(190,38){lower compartment of $dA$}
\put(10,25){\line(-1,0){10}}
\put(0,25){\line(0,1){20}}
\put(0,45){\vector(1,0){10}}
\put(-40,30){\small$\alpha$ times}

\put(70,25){\circle*{4}}
\put(58,15){\footnotesize$(i,k)\in \p$}
\multiput(70,28)(0,6){6}{\line(0,1){5}}
\end{picture}
\end{center}

$x_{ij}(\alpha)\,$, $(i,j)\in L_2\setminus L_2^0$, acts on $e_{_A}$ by adding $\alpha$ times row $i$ to row $j$ and hence changing only position $(j,k)\in J$ ($k\notin \ss$). In the order above, the root subgroups fill the positions (in $J$) on the top of main conditions  from top down. As a consequence, $e_{_A}\ff$ is $q^{-|L_2|+|L_2^0|}$ times the sum of all lidempotents $e_{_B}$, where $B$ runs through the set of all matrices in $V_J$, coinciding with $A$ except the positions (in $J$)  in columns of and above the main conditions, which are filled by arbitrary entries from $\F_q$. This proves $e_{_A}\ff=q^{-|L_2|+|L_2^0|} \sum e_{_B}\neq 0$ proving the claim.
\end{proof}

Now let $H\leqslant \Pstab_{\mbox{\tiny$U$}}(\hat e_{_A})=\{u\in U\,|\, \hat e_{_A}u=\lambda_u \hat e_{_A}, \,\exists\, \lambda_u \in \C ^*\}$. Then
 $\lambda: H\rightarrow \C^*: u\mapsto \lambda_u$ is a linear character of $H$ and we have a natural epimorphism of $\C U$-modules
\begin{equation}\label{7.3}
\mu:\, \Ind^U_H\C \hat e_{_A}\rightarrow \C \cO_A^J: \hat e_{_A} \otimes u\mapsto \hat e_{_A}u 
\end{equation}
for any $u\in U$. We shall show that $U_{\hat \cR}\leqslant \Pstab_{\mbox{\tiny$U$}}(\hat e_{_A})$, where  $\hat \cR\subseteq \Phi^-$ is defined in \ref{pstab} for orbit $\cO_A=\cO_A^{\Phi^-}\subseteq \cE_{\Phi^-}$ containing the verge lidempotent $[A]$. Then we prove that $U_{\hat \cR}$ acts on $\hat e_{_A}$ by the linear character afforded by $\epsilon_{_A}$ defined in \ref{5.9}. Then  \ref{5.9} says in particular that $\Ind^{\mbox{\footnotesize$U$}}_{\mbox{\footnotesize$U_{\hat \cR}$}}\C \hat e_{_A}$ is irreducible, proving that $\mu$ in (\ref{7.3}) with $H=U_{\hat \cR}$ must be an isomorphism. This implies $\Pstab_{\mbox{\tiny$U$}}(\hat e_{_A})=U_{\hat \cR}$ and  identifies the irreducible $\C U$-module $\C \cO_A^J$ as the  unique irreducible constituent in $\C \cO_A^{\Phi^-}=[A]\C U$ corresponding to the trivial module of $\C (U_{\hat \cR^-}/U_{\cR^o})$ extended to  $\End_{\C U}(\C \cO_A^{\Phi^-})$ by $\theta_{_A}$ as in \ref{5.8} part 3).

\begin{Lemma}\label{7.4}
$U_{L_2}\leqslant \Stab_{\mbox{\tiny$U$}}(\hat e_{_A})$ and $L_2\in \cR^o$. Moreover $U_{L_2}\leqslant \Stab_{\mbox{\tiny$U$}}[A]$ as well.
\end{Lemma}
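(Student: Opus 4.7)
The plan has three pieces, corresponding to the three assertions.

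First, for $U_{L_2}\leqslant \Stab_{U}(\hat e_{_A})$, I would argue directly from the definition of $\ff$. By \ref{fide}, $\ff=\hat U_{L_2}=q^{-|L_2|}\sum_{x\in U_{L_2}}x$ is the trivial (group) idempotent of $\C U_{L_2}$, so $\ff u=\ff$ for every $u\in U_{L_2}$. Hence
\[
\hat e_{_A}\,u=(e_{_A}\ff)u=e_{_A}(\ff u)=e_{_A}\ff=\hat e_{_A},
\]
which is exactly what we need (note this uses the associativity of the $\C U$-action on $\C\cE_J$, together with $U_{L_2}\leqslant U_L\leqslant U_K$).

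Second, to see $L_2\subseteq\cR^o$, I would unwind the definitions in \ref{pstab}. Given that $\p=\{(i_1,j_1),\ldots,(i_m,j_m)\}$ is the full condition set of $\cO_A^J$, we have $\ppi=\{i_1,\ldots,i_m\}=\ss$ and, by \ref{6.6}, $\ppi\cap\pj=\emptyset$ since $\p$ is completely hook disconnected. Take $(i,j)\in L_2$, so $i,j\in\ss=\ppi$. Then $j\in\ppi$ forces $j\notin\pj$, which by the description of $\cR$ in \ref{pstab} places $(i,j)$ in $\cR$. Moreover $(i,j)\notin\p$: the condition set $\p$ lies inside $J=\{(r,s)\in\Phi^-\mid r\in\ss,\ s\notin\ss\}$, whereas $j\in\ss$ shows $(i,j)\notin J\supseteq\p$. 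Therefore $(i,j)\in\cR\setminus\p=\cR^o$.

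Third, for $U_{L_2}\leqslant \Stab_{U}[A]$, I would quote \ref{pstab} directly. That result asserts $U_{\cR^o}$ acts trivially on the verge lidempotent $[A]$, and the previous step gives $L_2\subseteq\cR^o$, so $U_{L_2}\leqslant U_{\cR^o}\leqslant \Stab_{U}[A]$.

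None of these steps should be the real obstacle; the argument is essentially bookkeeping using the completely-hook-disconnected property (which is what guarantees $\ppi\cap\pj=\emptyset$ and forces $L_2$ to avoid both $\p$ and $\pj$-columns) together with the idempotent identity $\ff u=\ff$. The only place to be a little careful is making sure that $\ppi=\ss$, i.e.\ that we really are in the full-condition-set situation of \ref{6.6}; this is part of the standing assumption at the start of Section \ref{sec7}, so it is free.
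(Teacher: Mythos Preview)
Your proposal is correct and follows essentially the same approach as the paper. The only minor difference is that for $L_2\subseteq\cR^o$ the paper argues directly that $j\in\ss$ together with $\p\subseteq J$ forces $j\notin\pj$ (so column $j$ is a zero column of $A$), whereas you route this through $\ppi\cap\pj=\emptyset$; since $\ppi=\ss$ in the full-condition-set setting, the two arguments coincide, though the paper's version does not actually need the hook-disconnectedness hypothesis at this step.
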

\begin{proof}
By construction $\hat e_{_A}u=e_{_A}\ff u=e_{_A}\ff=\hat e_{_A}$ for all $u\in U_{L_2}$. 
So $U_{L_2}\leqslant \Stab_{\mbox{\tiny$U$}}(\hat e_{_A})$. For $(i,j)\in L_2$, we have $i,j\in \ss$ and hence $j\notin \pj\subseteq J$. Thus column $j$ is a zero column in $A$ and hence $(i,j)\in \cR^o$ by \ref{pstab}.
\end{proof}

Let $(a,b)\in L_2$, $(i,j)\in L_1$. Then $a,b\in \ss, i,j\notin \ss$ and hence $a\neq j$ and $b\neq i$. By Chevalley's commutator formula (see e.g. [\cite{carter}])
$X_{ab}$ and $X_{ij}$ commute and hence $U_L=U_{L_1}\times U_{L_2}$. In particular this implies
\begin{Lemma}\label{7.5}
$U_{L_1^0} =\Stab_{\mbox{\tiny$U_{L_1}$}}( e_{_A})\leqslant  \Stab_{\mbox{\tiny$U_{L_1}$}}( \hat e_{_A}) $. Moreover $L_1^0\subseteq \cR^0$ and hence $U_{L_1^0}\leqslant  \Stab_{\mbox{\tiny$U$}}[A]$.
\end{Lemma}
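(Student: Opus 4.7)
The plan is to prove each of the three assertions in turn, leveraging the commutation $U_L = U_{L_1}\times U_{L_2}$ recorded just before the statement, together with the explicit description of $\cR^o$ from \ref{pstab}.

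For the inclusion $U_{L_1^0}\leqslant \Stab_{U_{L_1}}(\hat e_{_A})$, I would pick $u\in U_{L_1^0}$, so $e_{_A}u=e_{_A}$ by \ref{results}(3)(i). Since $U_{L_1}$ and $U_{L_2}$ commute elementwise (Chevalley, as used in \ref{7.5}'s preamble), $u$ commutes with every element of $U_{L_2}$ and hence with $\ff=q^{-|L_2|}\sum_{x\in U_{L_2}}x$. Therefore
\[
\hat e_{_A} u \;=\; e_{_A}\ff\,u \;=\; e_{_A}u\,\ff \;=\; e_{_A}\ff \;=\;\hat e_{_A},
\]
which yields the desired inclusion. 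The equality $U_{L_1^0}=\Stab_{U_{L_1}}(e_{_A})$ is already contained in \ref{results}(3)(i), so nothing new is needed there.

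For the inclusion $L_1^0\subseteq \cR^o$, I would split on the two clauses in the definition of $L_1^0$ from \ref{results}(3)(i). Let $(i,j)\in L_1^0$; in either case $i,j\notin \ss$, so $(i,j)\in L_1$ and in particular $(i,j)\notin J\supseteq \p$. If $j\notin \pj$, then column $j$ contains no index from $\{j_1,\dots,j_k\}$, so $(i,j)\in \cR$ by the first clause in the definition of $\cR$ in \ref{pstab}. If instead there is $(b,j)\in \p$ with $b<i$, then $j=j_\nu$ for some $\nu$ with $i_\nu=b<i$, so $i_\nu\leqslant i\leqslant n$ and hence $(i,j_\nu)\in \cR$ by the second clause. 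In both subcases $(i,j)\in \cR$ and $(i,j)\notin \p$, so $(i,j)\in \cR^o=\cR\setminus \p$.

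Finally, since $\cR^o$ is closed (\ref{pstab}) and $U_{\cR^o}$ acts trivially on $[A]$, the inclusion $L_1^0\subseteq \cR^o$ gives $U_{L_1^0}\leqslant U_{\cR^o}\leqslant \Stab_{U}[A]$, completing the lemma. No step looks genuinely obstructive here: the only small care required is to keep track that $L_1\cap \p=\emptyset$ (because $\p\subseteq J$ and $L_1\cap J=\emptyset$), which is what guarantees that the positions in $L_1^0$ land in $\cR^o$ rather than merely in $\cR$.
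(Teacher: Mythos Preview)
Your proposal is correct and follows essentially the same approach as the paper: the paper's proof is the one-line reference ``This follows from \ref{results} part (3) \romannumeral1)'', relying on the commutation $U_L=U_{L_1}\times U_{L_2}$ established in the preamble, and you have simply spelled out these implicit steps in detail. The only difference is expository --- you verify explicitly that $L_1^0\subseteq \cR^o$ via the two clauses of $\cR$, whereas the paper already recorded this observation inside \ref{results}(3)(i) itself.
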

\begin{proof}
This follows from \ref{results} part (3) \romannumeral1).
\end{proof}

\begin{Defn}\label{7.6}
Let $L_1^1$ be the set of all positions $(i,j)\in L_1$ such that $e_{_A}x_{ij}(\alpha)\, (\alpha\in \F_q) $ change entries at main hook intersections. Thus $(i,j)\in L_1^1$ if and only if $i,j\in \pj$ and there exist $s, t\in \ppi$  such that $n\geqslant s>t>i>j\geqslant 1$ and $(s,i), (t,j)\in \p$.
\end{Defn}

\begin{Lemma}\label{7.7}
Let $(i,j)\in L_1^1$. Then $X_{ij}\leqslant \Stab_{\mbox{\tiny$U$}}( \hat e_{_A}) $. Moreover $(i,j)\in \hat \cR\setminus \cR=\hat \cR^-\setminus \cR^o$.
\end{Lemma}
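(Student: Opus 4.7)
The plan is to establish both claims by combining the explicit description of $\hat e_{_A}=e_{_A}\ff$ obtained in the proof of Lemma \ref{7.2} with a direct calculation of $e_{_A}x_{ij}(\alpha)$.

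\textbf{Stabilisation.} First I would compute $e_{_A}x_{ij}(\alpha)$ using Theorem \ref{4.9}. Since $(i,j)\in L_1$ lies entirely in the highest compartment of $dA$ (both $i,j\notin \ss$), the action reduces to a truncated column operation alone: add $-\alpha$ times column $j$ to column $i$ and project to $V_J$. Because $\supp(A)=\p$ is a main condition set, column $j$ of $A$ has exactly one nonzero entry, namely $A_{tj}$ at row $t\in\ppi\subseteq \ss$. Since $t>i$ and $i\notin\ss$, the position $(t,i)$ lies in $J$; all other positions $(r,i)$ with $r\neq t$ are unaffected. Hence $e_{_A}x_{ij}(\alpha)=e_{_{A'}}$ with $A'=A-\alpha A_{tj}e_{ti}$.

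Next, $X_{ij}$ commutes with $U_{L_2}$, hence with $\ff$: for $(a,b)\in L_2$ we have $a,b\in\ss$ while $i,j\notin\ss$, so the index overlaps required in Chevalley's commutator formula cannot occur (this is the very same observation used just before Lemma \ref{7.5}). Therefore
\[
\hat e_{_A}x_{ij}(\alpha)\;=\;e_{_A}\ff\, x_{ij}(\alpha)\;=\;e_{_A} x_{ij}(\alpha)\,\ff\;=\;e_{_{A'}}\ff.
\]
Now I would invoke the explicit description of $e_{_A}\ff$ from the proof of Lemma \ref{7.2}: it is (up to a nonzero scalar) the sum of all $e_{_B}$ where $B$ coincides with $A$ outside the ``free'' positions, the free positions being exactly those $(r,k)\in J$ with $k\in\pj$, $r\in\ss$, and $r$ strictly above the unique main condition $(i_\nu,k)\in\p$ in column $k$. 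The only position where $A$ and $A'$ differ is $(t,i)$, and this position is free: indeed $i\in\pj$, $t\in\ss$, and $t<s$ where $(s,i)\in\p$ is the main condition in column $i$. Since shifting $A_{ti}$ by a scalar merely reindexes the sum of $e_{_B}$'s, we obtain $e_{_{A'}}\ff=e_{_A}\ff=\hat e_{_A}$, which proves $X_{ij}\leqslant\Stab_{\mbox{\tiny $U$}}(\hat e_{_A})$.

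\textbf{Position $(i,j)$ lies in $\hat\cR\setminus\cR$.} From $j<i<t<s$ it is immediate that $(i,j)$ sits on the hook leg $h^l_{tj}$ of the main condition $(t,j)$. Acting by $x_{ij}(\alpha)$ on $[A]\in\cE_{\Phi^-}$ (via \ref{truncated}) again performs the truncated column operation, and by the same single-non-zero-entry analysis as above, the only entry of $A$ that changes sits at position $(t,i)$. But $(t,i)$ is the intersection of the arm of $(t,j)$ (since $j<i<t$) with the leg of $(s,i)$ (since $i<t<s$), i.e.\ a \emph{main hook intersection}. By Definition \ref{5.5} this places $(i,j)\in\hat\cR$. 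On the other hand, $j\in\pj$ so column $j$ is non-zero in $A$, and in that column $\cR$ contains only rows $\geqslant t$, while $i<t$; thus $(i,j)\notin\cR$. The equality $\hat\cR\setminus\cR=\hat\cR^-\setminus\cR^o$ is formal, using $\cR=\cR^o\sqcup\p$ and the fact that $(i,j)\in L_1$ forces $(i,j)\notin\p\subseteq J$.

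\textbf{Main obstacle.} The delicate point is the third part of the stabilisation argument: one must match, position by position, the single entry $(t,i)$ altered by the column operation with the set of ``free'' positions in the averaged sum $e_{_A}\ff$. The inequalities $j<i<t<s$ built into the definition of $L_1^1$ are exactly what makes $(t,i)$ both the unique change produced by $X_{ij}$ and a free position for $\ff$; verifying this compatibility carefully (rather than its apparent coincidence) is where the combinatorics of $\p$ being a main condition set is used most crucially.
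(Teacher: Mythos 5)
Your proof is correct and takes essentially the same approach as the paper's. Where the paper cites Result \ref{results} part (3) iii) to realize $e_{_A}x_{ij}(\alpha)$ as $e_{_A}x_{st}(\beta)$ for some $x_{st}(\beta)\in U_{L_2}$ and then absorbs that factor into $\ff$, you unpack the same observation directly by noting that $e_{_{A'}}\ff=e_{_A}\ff$ because the single changed position $(t,i)$ is a free position for the $U_{L_2}$-averaging (so $e_{_{A'}}$ lies in the $U_{L_2}$-orbit of $e_{_A}$); the second half of the argument, identifying $(t,i)$ as a main hook intersection to conclude $(i,j)\in\hat\cR\setminus\cR$, matches the paper's.
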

\begin{proof}
By definition of $L_1^1$ and \ref{results} part  (3) \romannumeral3) we find $(s,t)\in L_2$ and $\beta\in \F_q^*$ such that $e_{_A}x_{st}(\beta)=e_{_A}x_{ij}(\alpha)$ for   $\alpha\in \F_q^*$. Thus
$$
\hat e_{_A} x_{ij}(\alpha)=e_{_A}\ff x_{ij}(\alpha)=e_{_A} x_{ij}(\alpha)\ff=e_{_A} x_{st}(\beta)\ff=e_{_A}\ff=\hat e_{_A},
$$
since $x_{st}(\beta)\ff=\ff x_{st}(\beta)=f$. Thus $X_{ij}\leqslant \Stab_{\mbox{\tiny$U$}}( \hat e_{_A}) $. Now $[A]x_{ij}(\alpha)=\theta(\alpha A_{ij})[B]=[B]\in \cO_A$, since $A_{ij}=0$, where $B$ is obtained from $A$ by adding $-\alpha $ times column $j$ to column $i$ in $A$ and projecting the resulting matrix into $\Lie(U)$. Obviously this is the same matrix $B$ occurring in $e_{_A}x_{ij}(\alpha)=e_{_B}\in \cO_A^J$, differing from $A$ only on the main hook intersection $(t,i)\in J$. By \ref{pstab}, $(i,j)\in \hat \cR\setminus \cR=\hat \cR^-\setminus \cR^o$, as desired.
\end{proof}

Now let $J^0\subseteq J$ be the set of all positions $(a,j)$ in the lower compartment of $dA$ in column $j$ which are in zero columns of $A$ if  $j\notin \ss$ or,  there is a main condition $(b,j)$ above or on it. Thus  $J^0=\{(a,j)\in J\,|\, j\notin \pj \text{ or } \exists\, (b,j)\in \p \text{ with } b\leqslant a\}$.

\begin{Lemma}\label{7.8}
Let $(a,j)\in J^0$. Then $X_{aj}\leqslant \Pstab_{\mbox{\tiny$U$}}( \hat e_{_A})$ and $X_{aj}\leqslant\Stab_{\mbox{\tiny$U$}}( \hat e_{_A})$ if and only if $(a,j)\notin \p$. Moreover $(a,j)\in \cR$ and $(a,j)\in \cR^o$ if and only if $(a,j)\notin \p$. Finally $X_{aj}$ acts on $\hat e_{_A}$ by the same linear character as on $[A]\in \cE$.
\end{Lemma}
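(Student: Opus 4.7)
The plan is to compute $\hat e_{_A}x_{aj}(\alpha)$ explicitly by expanding $\hat e_{_A}$ as a sum of lidempotents, applying Theorem~\ref{4.9} term by term, and then matching the result against the action on $[A]\in\cE$ described by Theorem~\ref{pstab}.

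First I would recall from the proof of Lemma~\ref{7.2} that
\[
\hat e_{_A}=q^{-|L_2|+|L_2^0|}\sum_{B}e_{_B},
\]
where $B$ ranges over matrices in $V_J$ coinciding with $A$ at every position of $J$ \emph{except} those positions $(i,j)\in J$ that lie strictly above some main condition $(s,j)\in\p$ in the same column; on those positions $B_{ij}$ is an arbitrary element of $\F_q$. In particular, for each $B$ occurring in the sum, $B_{rs}=A_{rs}$ whenever $(r,s)\in\p$, and $B_{rs}=0$ whenever $s\notin\pj$ or $(r,s)\in J$ sits on or below the main condition of column $s$.

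Next I would apply Theorem~\ref{4.9} to each term: since $(a,j)\in J\subseteq K$, acting by $x_{aj}(\alpha)$ adds $-\alpha$ times column $j$ to column $a$ and truncates along $V_J$; but $a\in\ss$, so no position $(i,a)$ with $i\in\ss$ lies in $J$, hence the truncated matrix is $B$ again. Thus
\[
e_{_B}x_{aj}(\alpha)=\theta(\alpha B_{aj})\,e_{_B}.
\]
Now split by cases, using the definition of $J^0$. If $j\notin\pj$, then column $j$ is a zero column of $A$, and since no varying position lies in column $j$ either, $B_{aj}=0$ for every $B$. If instead $(b,j)\in\p$ with $b<a$, then the varying positions in column $j$ are only those with row index $<b$, so $B_{aj}$ is fixed equal to $A_{aj}$, which is $0$ because $\p$ has at most one entry per column. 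Finally if $b=a$, then $(a,j)\in\p$, $B_{aj}=A_{aj}\neq 0$ for every $B$. Summing gives in the first two cases $\hat e_{_A}x_{aj}(\alpha)=\hat e_{_A}$, and in the third $\hat e_{_A}x_{aj}(\alpha)=\theta(\alpha A_{aj})\hat e_{_A}$. This proves the first two assertions: $X_{aj}\leqslant\Pstab_{\mbox{\tiny$U$}}(\hat e_{_A})$ with equality in $\Stab$ precisely when $(a,j)\notin\p$.

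For the remaining assertions I would inspect Theorem~\ref{pstab}: by the definition of $\cR$, any $(a,j)\in J^0$ with $j\notin\pj$ lies in the ``zero column'' part of $\cR$, and any $(a,j)\in J^0$ with a main condition $(b,j)\in\p$, $b\leqslant a$, lies in the ``on or below a main condition'' part of $\cR$; hence $(a,j)\in\cR$. Moreover $(a,j)\in\cR^o=\cR\setminus\p$ if and only if $(a,j)\notin\p$. Finally, reading off the action of $X_{aj}$ on $[A]$ from Theorem~\ref{pstab}: when $(a,j)\in\cR^o$ it acts trivially, while when $(a,j)=(b,j)\in\p$ it acts by the linear character $\theta_{\nu}\colon x_{aj}(\alpha)\mapsto\theta(\alpha A_{aj})$. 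These are precisely the scalars computed above for the action on $\hat e_{_A}$, finishing the proof.

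The computations are entirely routine once the expansion of $\hat e_{_A}$ from Lemma~\ref{7.2} is in hand, so there is no real obstacle; the only care needed is checking that in the case $b<a$ no \emph{varying} term $e_{_B}$ can have $B_{aj}\neq 0$, which is what pins the scalar at $1$.
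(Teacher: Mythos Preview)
Your proof is correct and follows essentially the same approach as the paper: expand $\hat e_{_A}$ via the sum from Lemma~\ref{7.2}, use that $U_J$ acts on each $e_{_B}$ by the scalar $\theta(\alpha B_{aj})$, observe that $B_{aj}$ is either $0$ or $A_{aj}$ depending on whether $(a,j)\in\p$, and then match against Theorem~\ref{pstab}. Your case analysis is slightly more explicit than the paper's, but the argument is the same.
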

\begin{proof}
Recall from the proof of \ref{7.2} that $\hat e_{_A}=e_{_A}\ff=q^{-|L_2|+|L_2^0|}\sum_{B} e_{_B}$, where $B$ runs through the set of all matrices in $V_J$ coinciding with $A$ at positions in $\p$, having zero entries at all positions in zero columns of $A$ and in columns of $A$ with main conditions below those. Let $(a,j)\in J, \alpha\in \F_q$, then by \ref{action} and \ref{2.17}
\[
e_{_B}x_{aj}(\alpha)=\theta(\alpha B_{aj})e_{_B}=
\begin{cases}
e_{_B} & \text{ for } (a,j)\notin \p\\
\theta(\alpha A_{aj}) e_{_B}& \text{ for } (a,j)\in \p
\end{cases}
\]
for such matrices $B$, since then
$B_{aj}\neq0$ only if $(a,j)\in \p$ and then $B_{aj}=A_{aj}$. Thus
\begin{equation}\label{7.9}
\hat e_{_A}x_{aj}(\alpha)=
\begin{cases}
\hat e_{_A} & \text{ for } (a,j)\notin \p\\
\theta(\alpha A_{aj}) \hat e_{_A}& \text{ for } (a,j)\in \p.
\end{cases}
\end{equation}
By \ref{pstab} again $X_{aj}\in \cR^o$, if $(a,j)\notin \p$ and if $(a,j)\in \p$ then $[A]x_{aj}(\alpha)=\theta(\alpha A_{aj})[A]$.
\end{proof}

Let $I=I(\s)=\{(i,j)\in \Phi^-\,|\,i\notin \ss, j\in \ss\}$ (compare \ref{4.4} part 3)). Then $I$
 is  closed in $\Phi^-$ and $\Phi^-$ is the disjoint union of $K$ and $I$. So $U=U_KU_I=U_IU_K$ (but in general neither $U_K$ nor $U_I$ is normal in $U$). In general $U_I$ does not act monomially on the lidempotent basis $\cO_A^J$ of $\C \cO_A^J$, but, as we shall show, $U_I$ is contained in $\Stab_{\mbox{\tiny$U$}}(\hat e_{_A})$. To prove this recall $M_\s\cong \C \cE_J$ has $\C$-basis $\{\overline{P_\lambda} du\,|\, u\in U_J\}, d=d(\s)\in \cD_\lambda, U_J=(U_\lambda^-)^d\cap U$, by \ref{mackey} part 2) and \ref{4.5} part 1). Clearly the image of $\hat e_{_A}=e_{_A}\ff$ in $M_\s$ under the isomorphism $f^*: \C \cE_J\rightarrow M_\s$ in \ref{results} part (5) is contained in $M_\s \ff$ which is generated as $\C$-vector space by $\{\overline{P_\lambda} du\ff\,|\, u\in U_J\}$, where $\overline{P_\lambda}=\sum_{x\in P_\lambda}x$. We show for $g\in U_I,$ that $\overline{P_\lambda} du\ff g=\overline{P_\lambda} du\ff$, proving $m\ff g=m \ff,\,\forall\, m\in M_\s$. From this follows in particular $\hat e_{_A}g=\hat e_{_A}$, that is $U_I\leqslant \Stab_{\mbox{\tiny$U$}}(\hat e_{_A})$.


We first inspect matrices of the form $duv$ for $u\in U_J, v\in U_ {L_2}$. Recall that we may think of $du$ as $u$ with reordered rows, dividing  $u$ into two compartments, the rows of the lower one labelled by $i_1<\cdots<  i_m$, those of the upper one by the numbers $1\leqslant j \leqslant n$ not contained in $\ppi=\{i_1,\ldots, i_m \}$ in their natural order. Note that with these convention, the ``last ones'' of \ref{xlambda}, coming from the diagonal ones in $u$, are at position $(i,i)$ in $du$ for $1\leqslant i \leqslant n$.


Let $u\in U_J, (a,b)\in L_2$ and $\beta\in \F_q$. Then $dux_{ab}(\beta)$ is obtained from $du$ by adding $\beta$ times column $a$ to column $b$. Note that in $du$ the only non-zero entries in columns $b$ and $a$ are the ``last ones'' at positions $(b,b)$ and $(a,a)$, since $u\in U_J, J=\{(r,s)\in \Phi^-\,|\,r\in \ss, \s\notin \ss\}$. So the matrix $dux_{ab}(\beta)$ coincides with $du$ in all positions but positions $(a,b)$, on which the entry of $dux_{ab}(\beta)$ is $\beta$:  
\begin{center}
\begin{picture}(290,110)
\put(0,40){$dux_{ab}(\beta)=$}

\put(80,10){\line(1,0){140}}
\put(80,90){\line(1,0){140}}
\put(80,10){\line(0,1){80}}
\put(220,10){\line(0,1){80}}

\put(77.5,30){\line(1,0){5}}

\put(120,87.5){\line(0,1){5}}

\put(170,87.5){\line(0,1){5}}

\put(118,97){\footnotesize$b$}
\put(168,97){\footnotesize$a$}
\put(65,28){\footnotesize$a$}
\put(65,55){\footnotesize$b$}

\put(170,105){\line(0,1){10}}
\put(170,115){\line(-1,0){50}}
\put(120,115){\vector(0,-1){10}}
\put(128,120){\footnotesize$\beta$ times}

\put(118,81){\scriptsize$0$}
\put(119,70){\scriptsize$\vdots$}
\put(118,63){\scriptsize0}
\put(118,55){\footnotesize1}
\put(77.5,58){\line(1,0){5}}
\put(118,48){\scriptsize0}
\put(119,37){\scriptsize$\vdots$}
\put(118,27){\footnotesize$\beta$}
\put(119,16){\scriptsize$\vdots$}
\put(118,10){\scriptsize0}

\put(168,27){\footnotesize1}
\put(169,16){\scriptsize$\vdots$}
\put(168,10){\scriptsize0}

\put(168,81){\scriptsize$0$}
\put(169,70){\scriptsize$\vdots$}
\put(169,58){\scriptsize$\vdots$}
\put(169,46){\scriptsize$\vdots$}
\put(169,34){\scriptsize$\vdots$}

\put(240,50){\small lower compartment}

\put(116,28){\line(-1,-2){12}}
\put(88,-5){\small position $(a,b)$}
\end{picture}
\end{center}

Let $\fD_{L_2}$ be the set of all matrices in $M_n(q)$ by placing arbitrary values from $\F_q$ in $du$ at positions $(a,b)\in L_2$. We have shown:

\begin{Lemma}\label{7.10}
Keeping the notation introduced above we have 
\[
du\ff=q^{-|L_2|}\sum _{v\in U_{L_2}} duv=q^{-|L_2|} \sum_{y\in \fD_{L_2}} y. 
\]
\hfill $\square$
\end{Lemma}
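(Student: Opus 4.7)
The first equality in the lemma is just the definition $\ff = q^{-|L_2|}\sum_{v\in U_{L_2}}v$, so all the content sits in the second equality. The plan is to exhibit the map
\[\phi\colon U_{L_2}\longrightarrow M_n(q),\quad v\longmapsto duv,\]
as a bijection onto $\fD_{L_2}$; this immediately yields $\sum_{v\in U_{L_2}}duv=\sum_{y\in\fD_{L_2}}y$. Injectivity of $\phi$ comes free from $du\in GL_n(q)$, and $|U_{L_2}|=q^{|L_2|}=|\fD_{L_2}|$, so only the inclusion $\phi(U_{L_2})\subseteq \fD_{L_2}$ requires genuine work.

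To prove that inclusion I would write $v=E+B$ with $B\in V_{L_2}$ and reduce to showing the matrix identity $duB=B$. Here the relabelling convention fixed just before the lemma is crucial: after reordering rows, the $(r,k)$-entry of $du$ equals $u_{rk}$, which is $0$ unless $r=k$ (value $1$) or $(r,k)\in J=\{(i,j)\in\Phi^-\mid i\in\ss,\,j\notin\ss\}$. Expanding
\[(duB)_{rs}=\sum_{k}(du)_{rk}B_{ks},\]
a nonzero term requires $(k,s)\in\supp B\subseteq L_2$, which forces $k\in\ss$; but $k\in\ss$ rules out $(r,k)\in J$, so $(du)_{rk}\neq 0$ only when $r=k$, with value $1$. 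Hence $(duB)_{rs}=B_{rs}$ for $(r,s)\in L_2$ and vanishes otherwise, i.e., $duB=B$. Therefore $duv=du+B\in\fD_{L_2}$, and since $v\mapsto v-E$ is a bijection $U_{L_2}\to V_{L_2}$, the matrix $B$ sweeps out $V_{L_2}$ as $v$ varies, so $\phi$ is onto $\fD_{L_2}$.

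The main obstacle is only the bookkeeping needed to confirm that columns of $du$ indexed by elements of $\ss$ contain no nonzero entries beyond the diagonal $1$; once that is in hand, the computation above formalizes the pictorial argument given just above the lemma statement. One pleasant feature of the closed-form identity $duB=B$ is that it handles arbitrary $v\in U_{L_2}$ in a single stroke, so no induction on a linear ordering of $L_2$ (and no intermediate tracking of how successive column operations interact) is needed.
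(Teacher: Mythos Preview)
Your proof is correct and rests on exactly the same key observation as the paper's argument preceding the lemma: in $du$ (with the relabelled rows), every column indexed by an element of $\ss$ has only the diagonal $1$ as nonzero entry, because $J$ contains no positions with second coordinate in $\ss$. The paper uses this to describe the effect of a single root element $x_{ab}(\beta)$ and then passes to the general statement implicitly; your computation $(duB)_{rs}=B_{rs}$ (equivalently $uB=B$) packages the same observation into a one-line identity that handles all $v\in U_{L_2}$ at once, so your version is a slightly tidier formalization of the same idea rather than a genuinely different approach.
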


Fix $u\in U_J$ and define $\fD_{L_2}\subseteq M_n(q)$ as in  \ref{7.10}. Let $y\in \fD_{L_2}, (i,j)\in I$ and $\alpha\in \F_q$. Then we have:
\begin{Lemma}\label{7.11}
There exists $z\in P_\lambda$ (depending on $x_{ij}(\alpha)$) such that $zyx_{ij}(\alpha)=\tilde y\in \fD_{L_2}$.
\end{Lemma}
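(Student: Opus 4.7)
The plan is to construct $z$ as a product of generators that modify only one physical row of the matrix, and then reduce the required identity to a single entry of the inverse matrix. By Lemma~\ref{7.10}, I first write $y=duv$ for some $v\in U_{L_2}$. Right-multiplication by $x_{ij}(\alpha)$, with $(i,j)\in I$ (so $i\notin\ss$, $j\in\ss$, $i>j$), adds $\alpha$ times column $i$ to column $j$. Since $i\notin\ss$, column $i$ of $y$ consists of the diagonal $1$ at physical row $a:=id^{-1}$ (the upper-compartment row labelled by $i$) together with entries $u_{r,i}$ at the lower-compartment rows labelled by $r\in\ss$ with $r>i$. Hence $yx_{ij}(\alpha)$ differs from $y$ only by an added $\alpha$ at label-position $(i,j)\in I$ and added $\alpha u_{r,i}$ at label-positions $(r,j)\in L_2$; only the former violates membership in $\fD_{L_2}$.

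Next, I exploit the fact that $\row_{\t^\lambda}(a)=\row_\s(i)=1$, so every pair $(a,c)$ with $c\neq a$ lies in $J_\lambda$ and $x_{a,c}(\gamma)\in P_\lambda$ for all $\gamma\in\F_q$. Moreover such generators pairwise commute (they share the first index), so any $z=\prod_{c\neq a}x_{a,c}(\gamma_c)$ lies in $P_\lambda$ and modifies only the physical row $a$ of $M:=yx_{ij}(\alpha)$, replacing $R_a=e_i+\alpha e_j$ by $R_a+\sum_{c\neq a}\gamma_c R_c$ while leaving every other row fixed. Because every element of $\fD_{L_2}$ has $a$-th row equal to $e_i$, the condition $zyx_{ij}(\alpha)\in\fD_{L_2}$ collapses to the linear system $\sum_{c\neq a}\gamma_c R_c=-\alpha e_j$, which is solvable if and only if the coefficient of $R_a$ in the unique expansion $e_j=\sum_c(M^{-1})_{jc}R_c$ vanishes, i.e.\ if and only if $(M^{-1})_{j,a}=0$.

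The remaining task is to verify $(M^{-1})_{j,a}=0$. Since $M^{-1}=x_{ij}(-\alpha)y^{-1}$ and the $j$-th row of $x_{ij}(-\alpha)$ is $e_j$ (because $i\neq j$), this equals $(y^{-1})_{j,a}$; and using $y^{-1}=v^{-1}u^{-1}d^{-1}$ with $(d^{-1})_{l,a}=\delta_{i,l}$, it reduces to $(v^{-1}u^{-1})_{j,i}=0$. The main obstacle, and the computational heart of the proof, is this final verification: because $J$ contains no composable pairs (one would need an index simultaneously in $\ss$ and not in $\ss$), one has $u^{-1}=E-(u-E)$, so $(u^{-1})_{k,i}$ is nonzero only if $k=i$ or $k\in\ss$ with $k>i$; since $v^{-1}\in U_{L_2}$, $(v^{-1})_{j,k}$ is nonzero only if $k=j$ or $k\in\ss$ with $k<j$. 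Given $j<i$, each of the four resulting combinations fails these compatibility constraints, whence $(v^{-1}u^{-1})_{j,i}=\sum_k(v^{-1})_{j,k}(u^{-1})_{k,i}=0$, and the required $z\in P_\lambda$ exists with $\tilde y:=zyx_{ij}(\alpha)\in\fD_{L_2}$.
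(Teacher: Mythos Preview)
Your proof is correct. Both arguments hinge on the same observation: the only defect of $M=yx_{ij}(\alpha)$ relative to $\fD_{L_2}$ lies in physical row $a=id^{-1}$ (the upper-compartment row labelled $i$), and since $\row_{\t^\lambda}(a)=1$, any row operation adding other rows to row $a$ comes from an element of $P_\lambda$. The paper carries this out constructively: it first applies the single operation corresponding to adding $-\alpha$ times the row labelled $j$ to the row labelled $i$ (clearing the offending entry at label-position $(i,j)$), and then removes the resulting debris to the left of $(i,j)$ by further explicit row operations, tracking where the new nonzero entries land and checking each operation lies in $P_\lambda$. Your route is more algebraic: you parametrise all admissible $z$ at once, reduce $\tilde y\in\fD_{L_2}$ to the solvability of a linear system in the coefficients $\gamma_c$, and then identify the unique obstruction as the single matrix entry $(M^{-1})_{j,a}=(v^{-1}u^{-1})_{j,i}$, which you kill by the support analysis using $(u-E)^2=0$ and $j<i$. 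The paper's approach makes the resulting $\tilde y$ explicit (useful if one wants to see exactly which $L_2$-entries change), while yours avoids the bookkeeping of intermediate row operations and packages the whole argument into a clean vanishing condition.
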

\begin{proof}
Since $(i,j)\in I$ we have $i\notin \ss, j\in \ss$. Note that $yx_{ij}(\alpha)$ is obtained from $y$ by adding $\alpha$ times column $i$ to column $j$ in $y$. Note further, that the only non-zero entries of $y$ besides the last ones are all in the lower compartment and  on positions $(s,t)$ with $s>t$ and  $s\in \ppi$, $i.e.$ the ``last one'' at position $(s,s)$ belongs to the lower compartment. So:
\begin{equation}
\begin{picture}(310,230)
\put(0,100){$yx_{ij}(\alpha)=$}

\put(80,20){\line(0,1){160}}
\put(240,20){\line(0,1){160}}
\put(80,20){\line(1,0){160}}
\put(240,180){\line(-1,0){160}}

\put(80,85){\line(1,0){160}}

\put(77.5,60){\line(1,0){5}}
\put(70,58){\small$j$}

\put(77.5,140){\line(1,0){5}}
\put(70,136){\small$i$}

\put(150,177.5){\line(0,1){5}}
\put(148,190){\small$j$}
\put(210,177.5){\line(0,1){5}}
\put(208,190){\small$i$}

\put(210,200){\line(0,1){10}}
\put(210,210){\line(-1,0){60}}
\put(150,210){\vector(0,-1){10}}

\put(162,215){\small$\alpha$ times}

\put(147.5,137){\small$\alpha$}
\put(207.5,135){\small$1$}
\put(207.5,125){\small$0$}

\put(147.5,57){\small$1$}
\put(147.5,47){\small$*$}
\multiput(150,45)(0,-4){5}{\line(0,-1){2}}
\put(147.5,21){\small$*$}

\put(207.5,32){\small$*$}
\multiput(210,31)(0,-2){2}{\line(0,-1){1}}
\put(207.5,21){\small$*$}
\put(207.5,38){\small$0$}

\multiput(210,122)(0,-4){19}{\line(0,-1){2}}

\put(260,130){\small upper compartment}

\put(260,50){\small lower compartment}

\put(214,30){\line(1,-1){18}}
\put(228,5){\small positions in $J$}
\end{picture}
\end{equation}

Thus $yx_{ij}(\alpha)$ has $\alpha$ at position $(i,j)$. Now since $j<i, X_{ji}\subseteq U^+\leqslant P_\lambda$, hence $x_{ji}(-\alpha) y x_{ij}(\alpha)$ is obtained from $yx_{ij}(\alpha)$ by adding $-\alpha$ times row $j$ to row $i$, removing entry $\alpha$ at position $(i,j)$ again. This might introduce non-zero entries in row $i$ to the left of position $(i,j)$ But  those  can be removed by row operations coming from multiplication from the left by element $x_{si}(\gamma), \gamma\in \F_q$ where either $s\notin \ss$ (so $X_{si}\leqslant L_\lambda$) or $X_{si}(\sigma)\leqslant U_\lambda^+$, the unipotent radical of $P_\lambda$, if $s\in \ss, s<j$. Note that   besides the last one at position $(i,i)$, column $i$ of $y$ has non-zero entries only at positions $(b,i)$ with $i<b\in \ppi,$ that is to the left of the last one at position $(b,b)$, therefore the resulting matrix $\tilde y=zyx_{ij}(\alpha), z\in P_\lambda$, differs from $y$ only at positions in column $j$ below position $(j,j)$, which all belong to $L_2$. Then $\tilde y\in \fD_{L_2}$ again, as desired.
\end{proof}

\begin{Cor}\label{7.13}
Let $u\in U_J$, and let $\fD_{L_2}$ be defined for $u$ as above. Then $\overline{P_\lambda} du \ff g =\overline{P_\lambda} d u \ff$ for all $g\in U_I$.
\end{Cor}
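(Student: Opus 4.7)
The plan is to reduce to a single generator of $U_I$, write $\overline{P_\lambda}du\ff$ as a sum via Lemma~\ref{7.10}, apply Lemma~\ref{7.11} term-by-term, and then check that the resulting map on $\fD_{L_2}$ is an affine translation (hence a bijection) on a suitable parameter space.

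By the product decomposition (\ref{2.3}) applied to the closed set $I\subseteq\Phi^-$, every element of $U_I$ is a product of root-subgroup elements $x_{ij}(\alpha)$ with $(i,j)\in I$, so by induction it suffices to prove $\overline{P_\lambda}du\ff\cdot x_{ij}(\alpha)=\overline{P_\lambda}du\ff$ for each generator. Combining Lemma~\ref{7.10} with Lemma~\ref{7.11}, which for every $y\in\fD_{L_2}$ provides some $z_y\in P_\lambda$ and $\tilde y\in\fD_{L_2}$ with $z_yy\,x_{ij}(\alpha)=\tilde y$ (so that $\overline{P_\lambda}y\,x_{ij}(\alpha)=\overline{P_\lambda}\tilde y$), the left-hand side becomes $q^{-|L_2|}\sum_{y\in\fD_{L_2}}\overline{P_\lambda}\tilde y$, and the whole corollary reduces to showing that $y\mapsto\tilde y$ is a bijection of $\fD_{L_2}$.

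To exhibit the bijection I would unpack the construction in Lemma~\ref{7.11}. Writing $y=duv$ with $v\in U_{L_2}$, the only non-zero entries in column $i$ of $y$ are the last one at $(i,i)$ and the entries $u_{ri}$ at the $J$-positions $(r,i)$ with $r\in\ss$, $r>i$ (since $i\notin\ss$ forces $v\in U_{L_2}$ to leave column $i$ untouched). Consequently $y\,x_{ij}(\alpha)$ differs from $y$ only at the unwanted $I$-position $(i,j)$, which becomes $\alpha$, and at the $L_2$-positions $(r,j)$ with $r\in\ss$, $r>i$, which become $y_{rj}+\alpha u_{ri}$. The cleanup by $z_y$ consists of row operations within the upper compartment together with lower-to-upper row additions, both of which lie in $P_\lambda$ and fix the lower compartment. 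Hence $\tilde y$ agrees with $y$ everywhere except at the $L_2$-positions $(r,j)$ for $r\in\ss$, $r>i$, where the shift $\alpha u_{ri}$ depends only on $u,\alpha,i,j$ and \emph{not} on $v$.

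Finally I would invoke an additive parametrization of $\fD_{L_2}$: since $u\in U_J$ has entries only in $J$-positions, a direct product computation gives $du\cdot V=V$ for any $V\in V_{L_2}$, so $v=E+V\in U_{L_2}$ yields $y=duv=du+V$; this identifies $\fD_{L_2}$ with the $\F_q$-vector space $V_{L_2}$ via $y\leftrightarrow y-du$. Under this identification $y\mapsto\tilde y$ is the translation $V\mapsto V+C$ by the fixed element $C=\sum_{r\in\ss,\,r>i}\alpha u_{ri}\,e_{rj}\in V_{L_2}$, hence a bijection of $\fD_{L_2}$. Reindexing the sum gives $\sum_y\overline{P_\lambda}\tilde y=\sum_y\overline{P_\lambda}y$, which completes the proof. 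The principal obstacle is in the third paragraph: carefully verifying that the $P_\lambda$-cleanup used to construct $\tilde y$ leaves the lower compartment untouched, so that the shift is genuinely constant in $v$ and the induced map is a plain translation on the parameter space rather than something less structured.
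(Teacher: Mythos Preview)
Your proof is correct and follows the same skeleton as the paper: reduce to a generator $x_{ij}(\alpha)$, rewrite $\overline{P_\lambda}du\ff$ via Lemma~\ref{7.10}, apply Lemma~\ref{7.11} termwise to get $\overline{P_\lambda}yx_{ij}(\alpha)=\overline{P_\lambda}\tilde y$, and conclude by showing $y\mapsto\tilde y$ permutes $\fD_{L_2}$. The paper dispatches this last step with a single word (``Clearly $y\to\tilde y$ is a permutation of $\fD_{L_2}$''), whereas you actually prove it by identifying $\fD_{L_2}\cong V_{L_2}$ and exhibiting the map as translation by the fixed vector $C=\sum_{r\in\ss,\,r>i}\alpha u_{ri}e_{rj}$; your verification that $du\cdot V=V$ for $V\in V_{L_2}$ and that the cleanup $z$ touches only the upper compartment (so the lower-compartment shift is exactly that produced by the column operation, independent of $v$) is what the paper's ``clearly'' is hiding, and it is correct.
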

\begin{proof}
Since $U_I$ is generated by $X_{ij}$, $(i,j)\in I$, we may assume $g=x_{ij}(\alpha)$ for some $(i,j)\in I$ and $\alpha\in \F_q$. By \ref{7.11} we have for each $y\in \fD_{L_2}$,\,
$\overline{P_\lambda} y x_{ij}(\alpha)=\overline{P_\lambda}z^{-1}\tilde y=\overline{P_\lambda}\tilde y$. Clearly $y\rightarrow \tilde y$ is a permutation of $\fD_{L_2}$ and hence by \ref{7.10}
\[
\overline{P_\lambda} du \ff x_{ij}(\alpha)=q^{-|L_2|} \sum_{y\in \fD_{L_2}} \overline{P_\lambda}y x_{ij}(\alpha)=q^{-|L_2|} \sum_{y\in \fD_{L_2}}\overline{P_\lambda}\tilde y=\overline{P_\lambda} du \ff.
\]
\end{proof}

Now our desired result follows immediately, observing that $f^*(\hat e_{_A})\in M_\s\ff$:
\begin{Cor}\label{7.14}
Let $g\in U_I$. Then $\hat e_{_A}g=\hat e_{_A}$ and hence $g\in \Stab_{\mbox{\tiny$U$}}( \hat e_{_A})$. \hfill $\square$
\end{Cor}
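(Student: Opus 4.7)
The plan is to reduce the claim to Corollary \ref{7.13} via the $\C U_K$-isomorphism $f^{*}\colon \C \cE_J\to M_\s$ described in Results \ref{results} part (5). By that result, the irreducible $\C U_K$-module $\C \cO_A^J$ is identified with the $\C U$-submodule of $M_\s$ generated by $f^{*}(e_{_A})$, and the $U$-action on $\C \cE_J$ (extending the given $U_K$-action) is \emph{defined} by transporting the $U$-action on $M_\s$ through $f^{*}$. Since $U_I\not\leqslant U_K$, there is no direct monomial formula for $U_I$ acting on $\cE_J$; this transport is what makes the statement meaningful, and it is also the reason one must go through $M_\s$ to prove it.

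First, I would record that $\hat e_{_A}=e_{_A}\ff$ maps under $f^{*}$ into $M_\s\,\ff$. Indeed $\ff=\hat U_{L_2}\in \C U_{L_2}\subseteq \C U_K$, and $f^{*}$ is a $\C U_K$-homomorphism by Lemma \ref{1.8}, so $f^{*}(\hat e_{_A})=f^{*}(e_{_A})\,\ff$. By Proposition \ref{mackey} part 2) together with Proposition \ref{4.5} part 1), the module $M_\s$ has $\C$-basis $\{\overline{P_\lambda}\,du\mid u\in U_J\}$, hence $M_\s\,\ff$ is spanned over $\C$ by the vectors $\overline{P_\lambda}\,du\,\ff$ with $u\in U_J$. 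Writing $f^{*}(e_{_A})$ as an explicit linear combination of the basis vectors $\overline{P_\lambda}\,du$ (as in Results \ref{results} part (5)), we see that $f^{*}(\hat e_{_A})$ is a $\C$-linear combination of such $\overline{P_\lambda}\,du\,\ff$'s.

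Now fix $g\in U_I$. Corollary \ref{7.13} asserts exactly that $\overline{P_\lambda}\,du\,\ff\, g=\overline{P_\lambda}\,du\,\ff$ for every $u\in U_J$. By $\C$-linearity this forces $m\,g=m$ for every $m\in M_\s\,\ff$, and in particular $f^{*}(\hat e_{_A})\,g=f^{*}(\hat e_{_A})$. Because $f^{*}$ is a $\C U$-module isomorphism from $\C\cE_J$ (with the transported $U$-action) onto its image in $M_\s$, applying $(f^{*})^{-1}$ yields $\hat e_{_A}\,g=\hat e_{_A}$, and therefore $g\in \Stab_{\mbox{\tiny$U$}}(\hat e_{_A})$. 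Since $U_I$ is generated by the root subgroups $X_{ij}$ with $(i,j)\in I$, it suffices in principle to check the claim for $g=x_{ij}(\alpha)$, but this is built into Corollary \ref{7.13} already, so nothing further is needed.

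The main obstacle is purely conceptual rather than computational: one must keep track of which module the action is taking place in, since $U_I$ does not act monomially on the lidempotent basis $\cE_J$ and the only accessible realisation of its action is via the Mackey basis of $M_\s$ together with the homomorphism $f^{*}$. Once the translation is made, the corollary is an immediate consequence of Corollary \ref{7.13} and the $\C U_K$-linearity of $f^{*}$.
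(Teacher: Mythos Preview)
Your proof is correct and follows essentially the same approach as the paper: the argument reduces the claim to Corollary \ref{7.13} by observing that $f^{*}(\hat e_{_A})\in M_\s\,\ff$, which is spanned by the vectors $\overline{P_\lambda}\,du\,\ff$, and then transports the conclusion back via the $\C U$-isomorphism $f^{*}$. The paper's own proof is a one-line remark to this effect, and your write-up simply makes the steps explicit.
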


Let $(i,j)\in I$, then $i\notin \ss$ and $j\in \ss$. In particular, since $\p\subseteq J=\{(a,b)\in \Phi^-\,|\, a\in \ss, b\notin \ss\}$, $(r,j)\notin \p$ for all $j<r\leqslant n$ and hence column $j$ is a zero column in $A$. By \ref{pstab} $I\subseteq \cR^o\subseteq \hat \cR$. Indeed we have the following:

\begin{Lemma}\label{7.15}
$\hat \cR=L_2\cup L_1^0 \cup L_1^1 \cup J^0 \cup I$ with $\hat \cR\setminus \cR=L_1^1,\, \cR=L_2\cup L_1^0 \cup J^0 \cup I$ and $\cR^0=L_2\cup L_1^0   \cup I\cup (J^0\setminus\p)$.
\end{Lemma}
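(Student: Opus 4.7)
The plan is to partition $\Phi^- = L_1 \sqcup L_2 \sqcup J \sqcup I$ and determine the intersection of $\cR$ and $\hat\cR$ with each of the four pieces separately. The key combinatorial input throughout is that $\p$ is completely hook disconnected, i.e.\ $\ppi \cap \pj = \emptyset$, which (since $\ss = \ppi$ in our setup) also says $\ss \cap \pj = \emptyset$.

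First I would compute $\cR$ using the explicit description from \ref{pstab}: $(r,s) \in \cR$ iff $s \notin \pj$ or $s = j_\nu$ with $i_\nu \leq r$. Any $(i,j) \in L_2$ or $(i,j) \in I$ has $j \in \ss$, hence $j \notin \pj$ by complete hook disconnectedness; so $L_2, I \subseteq \cR$, and since $\p \subseteq J$ is disjoint from both, they even lie in $\cR^o$. For $(i,j) \in L_1$ we have $j \notin \ss$, and the condition to be in $\cR$ reduces to either $j \notin \pj$, or $j = j_\nu \in \pj$ with $(b,j) = (i_\nu,j) \in \p$ satisfying $b \leq i$; but $b \in \ppi$ and $i \notin \ppi$ force $b \neq i$, so the condition becomes $b<i$. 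This is precisely the definition of $L_1^0$, and again $L_1 \cap \p = \emptyset$ places $L_1^0$ inside $\cR^o$. For $(a,j) \in J$ the condition collapses exactly to the definition of $J^0$. Combining these yields $\cR = L_2 \cup L_1^0 \cup J^0 \cup I$ and $\cR^o = L_2 \cup L_1^0 \cup I \cup (J^0\setminus \p)$, the two middle identities of the statement.

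For $\hat\cR$, by \ref{7.7} we already know $L_1^1 \subseteq \hat\cR \setminus \cR$, so it remains to show that no further position of $\Phi^- \setminus \cR = (L_1 \setminus L_1^0) \cup (J \setminus J^0)$ lies in $\hat\cR$. For $(i,j) \in L_1 \setminus L_1^0$ there is a unique $(b,j) \in \p$ with $b>i$, and right multiplication of $[A]$ by $x_{ij}(\alpha)$ performs a truncated column operation that affects \emph{only} position $(b,i)$ (since $(b,j)$ is the only nonzero entry in column $j$ of $A$). This $(b,i)$ is a main hook intersection precisely when some $(s,i) \in \p$ has $s > b$, giving the chain $j < i < b < s$ with $(b,i)$ sitting at the crossing of the arm of the hook centred at $(b,j)$ and the leg of the hook centred at $(s,i)$; this is exactly the defining condition of $L_1^1$. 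Thus $(L_1\setminus L_1^0)\cap \hat\cR = L_1^1$.

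The main obstacle is handling $(a,j) \in J \setminus J^0$. Here the unique $(b,j) \in \p$ satisfies $b > a$, and $x_{aj}(\alpha)$ acting on $[A]$ changes only position $(b,a)$. The crucial step is to observe that $a \in \ss = \ppi$ together with $\ppi \cap \pj = \emptyset$ forces $a \notin \pj$, so column $a$ of $A$ is a zero column; in particular no main condition sits in column $a$, so $(b,a)$ cannot lie at the intersection of a main hook leg with a main hook arm. Hence $(a,j)\notin \hat\cR$, and no extra positions from $J$ contribute. Putting everything together gives $\hat\cR = L_2 \cup L_1^0 \cup L_1^1 \cup J^0 \cup I$ with $\hat\cR \setminus \cR = L_1^1$, completing the proof.
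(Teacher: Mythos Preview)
Your proof is correct and follows essentially the same strategy as the paper: a case analysis over the partition $\Phi^-=L_1\sqcup L_2\sqcup J\sqcup I$. The paper cites Lemmas~\ref{7.4}--\ref{7.8} and the remark before \ref{7.15} for the inclusions of the right hand sides into the left, then does a case split on whether $i,j\in\ss$ to obtain the reverse inclusion for $\cR^o$; you instead compute $\cR\cap L_1$, $\cR\cap L_2$, $\cR\cap J$, $\cR\cap I$ directly from the explicit description in \ref{pstab}, which is a bit more self-contained but amounts to the same computation. Your treatment of $\hat\cR\setminus\cR$ is also more explicit than the paper's --- in particular your argument that no $(a,j)\in J\setminus J^0$ can lie in $\hat\cR$ (because $a\in\ss=\ppi$ forces $a\notin\pj$, so $(b,a)$ is never a main hook intersection) spells out what the paper compresses into the single observation that every main hook intersection $(t,i)$ has $t\in\ss$ and $i\notin\ss$.
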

\begin{proof}
We have already seen in the previous results that the right hand sides are contained in the left hand sides of all equalities in the lemma. For $(s,i), (t,j)\in \p$ with $s>t>i>j$ there is a
 main hook intersection $(t,i)\in J$, since $t\in \ss$ and $i\notin \ss$. From this follows immediately that $\hat \cR\setminus \cR=L_1^1$. Moreover since $\p\subseteq J^0$, it suffices to check $\cR^o\subseteq L_2\cup L_1^0   \cup I\cup (J^0\setminus\p)$.


Let $(i,j)\in \cR^0$. If $j\in \ss$, then $(i,j)\in I$ if $i\notin \ss$. So let $i\in \ss$. Then $(i,j)\in L_2$.
Now suppose $j\notin \ss$. If $i\in \ss$, then $(i,j)\in J$. If $j\notin \pj$, then $j$ is a zero column in $A$ and $(i,j)\in J^0$. If $j\in \pj$ then there exists a main condition $(b,j)\in \p$  with $b<a$ by the definition of $\cR^0$ and hence $(i,j)\in J^0$ by the definition of $J^0$. Finally let $i,j\notin \ss$. Then $(i,j)\in L_1$. If $j\notin \pj$, then column $j$ is a zero column in $A$ and $(i,j)\in L_1^0$ by \ref{results} part (3) \romannumeral1). If $j\in \pj$, there is a main condition $(a,j)\in \p$ in column $j$ above $(i,j),$ that is $a<i$ by the definition of $\cR^o$ and hence $(i,j)\in L_1^0$ again by  \ref{results} part (3) \romannumeral1).
\end{proof}

Recall that $\p=\{(i_1, j_1), \ldots, (i_m,j_m)\}$ is completely hook disconnected. Thus we may apply the results in \ref{5.8}  to the $U$-orbit module $\C \cO_A=[A]\C U$. Recall from \ref{5.9} that there exists a linear character $\psi_{_A}$ whose restriction to $U_ {\hat \cR^ -}\trianglelefteq U_{\hat \cR}$ is trivial and which is $\theta_A$ defined in \ref{pstab} on $U_{\hat \cR}/U_{\hat \cR^-}\cong X_{i_1 j_1}\times \cdots \times X_{i_m j_m}$. Note that by \ref{7.8} and \ref{7.15} this is precisely the linear character of $U_{\hat \cR}$ afforded by $\C \hat e_{_A}$. Let $\epsilon_{_A}\in \C U_{\hat \cR}$ the primitive idempotent such that $\C \epsilon_{_A}$ affords $\psi_{_A}$ and set $\widehat{[A]}=[A]\epsilon_{_A}\in \C \cO_A$. Then $S=\widehat{[A]}\C U\leqslant \C \cO_A$ is the induced $\C U$-module $\Ind^U_{U_{\hat \cR}}\C \widehat{[A]}$ and is irreducible. Since $\C \hat e_{_A}\cong\C \widehat{[A]}$ as $\C U_{\hat \cR}$-modules, we conclude that $\Ind^U_{U_{\hat \cR}}\C \hat e_{_A}$ is irreducible too and hence the map $\mu$ in \ref{7.3}
\begin{equation}
\mu:\, \Ind^U_{U_{\hat \cR}}\C \hat e_{_A}\rightarrow \hat e_{_A}\C U= \C \cO_A^J
\end{equation}
is an $\C U$-module isomorphism. Thus we have shown:

\begin{Theorem}\label{7.16}
Let $\lambda=(n-m,m)$ be a composition of $n$, $\s\in \RStd(\lambda), J=J(\s)$ and let $e_{_A}\in \cE_J$ be a verge with $\p=\supp(A)$. Then $\C \cO_A^J=e_{_A}\C U$ is an irreducible $\C U$-module  isomorphic to the unique irreducible constituent of $[A]\C U=\C \cO_A$ corresponding to the trivial representation of $U_{\hat \cR^-}$ extended to $U_{\hat \cR}$ by the linear character $\theta_{_A}$ of $\mbox{\huge$\times$}\!_{(i,j)\in \p} X_{ij}$. Conversely, if $\p\subseteq \Phi^-$ is a set of completely hook disconnected main conditions, $A\in M_n(q)$ with $\supp(A)=\p$, then the unique irreducible constituent of $\C \cO_A$ described above is isomorphic to $\C\cO_A^J$, where $\lambda=(n-|\p|, |\p|)$. \hfill $\square$
\end{Theorem}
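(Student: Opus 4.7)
The strategy is to use the machinery of Results \ref{5.8}: for the completely hook disconnected main condition set $\p = \supp(A)$, the $\C U$-module $\C \cO_A$ has a unique irreducible constituent $S_{\mathrm{triv}}$ of multiplicity one corresponding to the trivial $\C H$-module, namely $S_{\mathrm{triv}} \cong \Ind^U_{U_{\hat \cR}} \C \widehat{[A]}$, where $U_{\hat \cR}$ acts by the linear character $\psi_A$ that equals $\theta_A$ on $U_{\hat \cR}/U_{\hat \cR^-} \cong \mathop{\mbox{\huge$\times$}}_{(i,j)\in\p} X_{ij}$ and is trivial on $U_{\hat \cR^-}$. To identify $\C \cO_A^J$ with $S_{\mathrm{triv}}$, it suffices to exhibit a nonzero vector $\hat e_A \in \C \cO_A^J$ on which $U_{\hat \cR}$ acts by $\psi_A$: then Frobenius reciprocity gives a nonzero $\C U$-homomorphism $\Ind^U_{U_{\hat \cR}} \C \hat e_A \to \C \cO_A^J$, and since both modules are irreducible of the same dimension $[U : U_{\hat \cR}]$, this must be an isomorphism.

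I would take $\hat e_A = e_A \ff$ with $\ff = \hat U_{L_2}$, show it is nonzero (Lemma \ref{7.2}), and then verify root-subgroup by root-subgroup, using the decomposition $\hat \cR = L_2 \cup L_1^0 \cup L_1^1 \cup J^0 \cup I$ of Lemma \ref{7.15}, that $U_{\hat \cR}$ stabilizes $\hat e_A$ projectively with the correct scalar. The cases $L_2$, $L_1^0$, $L_1^1$, $J^0$ are essentially direct: $U_{L_2}$ is built into $\ff$ (Lemma \ref{7.4}); $U_{L_1^0}$ already stabilizes $e_A$ and commutes with $\ff$ by Chevalley since $L_1, L_2$ involve disjoint row/column indices (Lemma \ref{7.5}); for $(i,j) \in L_1^1$ the potentially non-trivial change at a main hook intersection can be absorbed by a compensating $x_{st}(\beta) \in U_{L_2}$ swallowed by $\ff$ (Lemma \ref{7.7}); and for $(a,j) \in J^0$ a direct expansion of $\hat e_A = q^{c} \sum_B e_B$ shows that only the main conditions in $\p$ contribute, giving exactly the scalar $\theta(\alpha A_{aj})$ matching the action on $[A]$ (Lemma \ref{7.8}).

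The main obstacle is the action of $U_I$: $U_I$ is not contained in $U_K$ and does not a priori permute the lidempotent basis $\cE_J$. The remedy is to translate into the $M_\s$-realization using the $\C U_K$-isomorphism $f^* : \C\cE_J \to M_\s$, under which $\hat e_A$ lies in $M_\s \ff$, spanned by the family $\{\overline{P_\lambda}\, y : y \in \fD_{L_2}\}$ obtained from $du$ by placing arbitrary field entries at $L_2$-positions (Lemma \ref{7.10}). For each $(i,j)\in I$ and each $y\in\fD_{L_2}$ one then exhibits $z \in P_\lambda$ with $z y x_{ij}(\alpha) = \tilde y \in \fD_{L_2}$ (Lemma \ref{7.11}): the insertion of $\alpha$ at $(i,j)$ is killed by a row operation $x_{ji}(-\alpha)\in U^+$, and the residue cleaned up by further elements of $L_\lambda \cup U_\lambda^+ \subseteq P_\lambda$. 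Since $y \mapsto \tilde y$ is a permutation of $\fD_{L_2}$ and left multiplication by $z$ is absorbed by $\overline{P_\lambda}$, averaging gives $\hat e_A \cdot x_{ij}(\alpha) = \hat e_A$ (Corollary \ref{7.14}). Combined with the earlier cases this shows $U_{\hat \cR} \leq \Pstab_U(\hat e_A)$ with action $\psi_A$, completing the forward direction.

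For the converse, given completely hook disconnected $\p = \{(i_1,j_1),\dots,(i_m,j_m)\}$ and $A$ with $\supp(A)=\p$, set $\lambda=(n-m,m)$ and let $\s$ be the unique row standard $\lambda$-tableau with $\ss=\ppi$; this is well defined because $\ppi \cap \pj = \emptyset$ forces $2m \leq n$. Then $\p \subseteq J(\s)$, so $e_A \in \cE_J$ is a verge with $\main(e_A) = \p$ and full condition set, and the forward direction identifies $\C \cO_A^J$ with the described constituent of $[A]\C U$. Theorem \ref{irr2} then transfers this to any other $(\mu,\t)$-configuration with the same $A$, yielding the converse.
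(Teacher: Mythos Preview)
Your proposal is correct and follows essentially the same route as the paper: define $\hat e_{_A}=e_{_A}\ff$, verify via the decomposition $\hat\cR=L_2\cup L_1^0\cup L_1^1\cup J^0\cup I$ (Lemmas \ref{7.4}--\ref{7.8}, Corollary \ref{7.14}, Lemma \ref{7.15}) that $U_{\hat\cR}$ acts on $\hat e_{_A}$ by $\psi_{_A}$, and conclude by irreducibility of the induced module. One small remark: you do not need the a priori dimension claim $\dim\C\cO_A^J=[U:U_{\hat\cR}]$; the paper simply uses that $\mu$ is a nonzero epimorphism from an irreducible module, hence an isomorphism.
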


There are several sequences of \ref{7.16}:
\begin{Cons}\label{7.17}
\begin{enumerate}
\item [1)]   Let $\lambda, \mu$ be 2-part partitions of $n$ and let $\s\in \RStd(\lambda), \t\in \RStd(\mu)$. Let $e_{_A}\in \cE_{J(\s)}$ and $e_{_B}\in \cE_{J(\t)}$ be verge lidempotents. Then  $\C \cO_A^{J(\s)}\cong \C \cO_B^{J(\t)}$ if and only if $A=
B$.
\item [2)] Let $\lambda=(n-m,m)\vDash n$ and let $\s\in \RStd(\lambda)$. Then $M_\s$ is  multiplicity free, its irreducible constituents being of the form $\C \cO_A^J,\, J=J(\s)$, where $A\in V_J$ satisfies: $\p=\supp(A)$ is a main condition set with $\ppi\subseteq \ss, \pj\cap \ss=\emptyset$. By [\cite{guo}, 2.5.10] $\dim _\C (\C \cO_A^J)=|\cO_A^J|$ depends only on $\p$, not on $A\in V_J$ with $\supp(A)=\p$, and hence there are $(q-1)^{|\p|}$ many orbits $\cO_A^J\subseteq \cE_J$ with $\supp(A)=\p$ for a given completely hook disconnected main condition set fitting $\s\in \RStd(\lambda)$. As consequence, the number of irreducible constituents of $M_\s$ of fixed dimension $q^c$ is a polynomial in $(q-1)$ with integral, non-negative coefficients independent of $q$.
\item [3)] Each permutation representation of $GL_n(q)$ on the cosets of a maximal parabolic subgroup is isomorphic to some $M(\lambda)$ of some partition $\lambda=(n-m,m)$ of $n$. Let $\fM_\lambda$ be the set of completely hook disconnected main condition sets $\p$, which fit at least one row standard $\lambda$-tableaux and for $\p\in \fM_\lambda$, let $k_{\lambda, \p}$ be the number of distinct $\s\in \RStd(\lambda)$ such that $\p$ fits $\s$. Then $k_{\lambda, \p}$ is independent of $q$  and 
\[
\Res^{GL_n(q)}_U M(\lambda)=\bigoplus_{\p\in \fM_\lambda} \bigoplus_{\stackrel{A\in M_n(q)}{\supp(A)=\p}} ([A]\epsilon_{_A} \C U)^{k_{\lambda, \p}}.
\] 
\end{enumerate}
\end{Cons}


\providecommand{\bysame}{\leavevmode ---\ }
\providecommand{\og}{``} \providecommand{\fg}{''}
\providecommand{\smfandname}{and}
\providecommand{\smfedsname}{\'eds.}
\providecommand{\smfedname}{\'ed.}
\providecommand{\smfmastersthesisname}{M\'emoire}
\providecommand{\smfphdthesisname}{Th\`ese}

\end{document}